\def\e{\varepsilon}
\def\Om{\Omega}
\def \p{\partial}
\def \0{\mathbf{0}}
\newcommand{\R}{{\mathbb R}}
\newcommand{\N}{\mathbb{N}}
\newcommand{\dive}{\nabla\cdot}
\newcommand{\be}{\begin{equation}}
\newcommand{\ee}{\end{equation}}
\newcommand{\ba}{\begin{array}}
\newcommand{\ea}{\end{array}}
\newtheorem{remark}{\textbf{Remark}}[section]
\newtheorem{theorem}{\textbf{Theorem}}[section]
\newtheorem{lemma}[theorem]{\textbf{Lemma}}
\newtheorem{corollary}[theorem]{\textbf{Corollary}}
\newtheorem{proposition}[theorem]{\textbf{Proposition}}
\newtheorem{definition}[remark]{\textbf{Definition}}
\providecommand{\customgenericname}{}
\newcommand{\newcustomtheorem}[2]{%
  \newenvironment{#1}[1]
  {%
   \renewcommand\customgenericname{#2}%
   \renewcommand\theinnercustomgeneric{##1}%
   \innercustomgeneric
  }
  {\endinnercustomgeneric}
}
\newtheorem{assumption}{\textbf{Assumption}}
\numberwithin{equation}{section}
\begin{document}
\title[]{A Hele-Shaw Limit With A Variable Upper Bound and Drift}  
\author{Raymond Chu}
\footnote{Department of Mathematics, University of California, Los Angeles}
\date{\today}\email{rchu@math.ucla.edu}
\maketitle


\maketitle

\begin{abstract}
	We investigate a generalized Hele-Shaw equation with a source and drift terms where the density is constrained by an upper-bound density constraint that varies in space and time. By using a generalized porous medium equation approximation, we are able to construct a weak solution to the generalized Hele-Shaw equations under mild assumptions. Then we establish uniqueness of weak solutions to the generalized Hele-Shaw equations. Our next main result is a pointwise characterization of the density variable in the generalized Hele-Shaw equations when the system is in the congestion case. To obtain such a characterization for the congestion case, we derive a new uniform lower bounds on the time derivative pressure of the generalized porous medium equation via a refined Aronson-B{\'e}nilan estimate that implies monotonicity properties of the density and pressure.
\end{abstract}

\section{Introduction}
We consider solutions $\varrho_k$ for $k>1$ of the modified Porous Medium Equation (mPME)
\begin{equation}  \p_t \varrho_k = \nabla \cdot( \varrho_k(\nabla p_k + \vec{b})) + f \varrho_k \text{ in } Q_T := \R^d \times (0,T),  \label{PME_Density} \end{equation} where the pressure $p_k:=\frac{k}{k-1} \left( \frac{\varrho_k}{m} \right)^{k-1}$. In this equation $m, \vec{b}$, and $f$ are given smooth functions (see Section \ref{notation} for the exact assumptions on these functions), while the unknown is the density $\varrho_k$. Here $k>1$ is a constant that reduces congestion by having strong diffusion when the density is larger than the constraint $m$. We can also rewrite this equation as
\begin{equation}  \p_t \varrho_k = \nabla \cdot( m  \nabla v_k^k + \varrho_k \vec{b}) + f \varrho_k \text{ in } Q_T \label{PME_Density_1} \end{equation}
because $\varrho_k \nabla p_k = m \nabla v_k^k$ where $v_k := \varrho_k/m$ is the normalized density. 
For the special case of $m$ being a constant, this equation reduces to the Porous Medium Equation (PME) with a given drift $-\vec{b}$ and source term $f$. This anti-congestion effect has led to the PME being used to model many physical phenomena such as population dynamics \cite{bertsch1984interacting, bertsch1986density, witelski1997segregation} and density based tumor growth \cite{perthame2014hele, david2021incompressible}.  Due to the physical importance of the pressure, we obtain from the constitutive law $p_k = \frac{k}{k-1} \left( \frac{\varrho_k}{m} \right)^{k-1}$ that
\begin{equation} \p_t p_k = |\nabla p_k|^2 + \nabla p_k \cdot \vec{b} + (k-1)p_k \left[ \frac{1}{m}\nabla \cdot(m \nabla p_k) + \frac{\nabla \cdot(m \vec{b}) + mf - \p_t m}{m} \right] \label{PME_pressure1}.
\end{equation}


We are interested in the $\mathit{Hele}$-$\mathit{Shaw}$ limit of $\eqref{PME_Density}$ and $\eqref{PME_pressure1}$, that is the limit as $k \rightarrow \infty$. Formally if we let $(\varrho_{\infty},p_{\infty})$ denote the limit of $(\varrho_k,p_k)$, then we have the following free boundary problem
\begin{equation}  \begin{cases}	\p_t \varrho_{\infty} = \nabla \cdot(m \nabla p_{\infty} + \varrho_{\infty} \vec{b}) + f \varrho_{\infty} \text{ in } Q_T \\ \nabla \cdot(m \nabla p_{\infty}) + \nabla \cdot(m \vec{b}) + mf - \partial_t m = 0 \text{ on } \{p_{\infty}>0\} \\
\varrho_{\infty}(x,t) \leq m(x,t) \text{ in } Q_T \\ p_{\infty}(m-\varrho_{\infty})=0  \text{ in } Q_T  \\   \end{cases} \label{Hele-Shaw}. \end{equation}

The case of $m$ being a constant is well studied and in this case \eqref{Hele-Shaw} is known as the Hele-Shaw equations. The Hele-Shaw equations are  used to model physical phenomena with a density constraint such as congested crowd motion \cite{maury2010macroscopic} and 
 tumor growth  \cite{perthame2014hele, david2021incompressible}. In the  tumor growth models, $m$ represents the maximum packaging density of cells \cite{tang2014composite}, while for the congested crowd motion models, $m$ represents the maximum pedestrian population. By allowing $m$ to vary in space and time, instead of being a constant, it allows these models to account for spatial and time changes to the maximum population or packaging density of cells. \\

\begin{figure}
    \centering
    \includegraphics[width=0.75\textwidth]{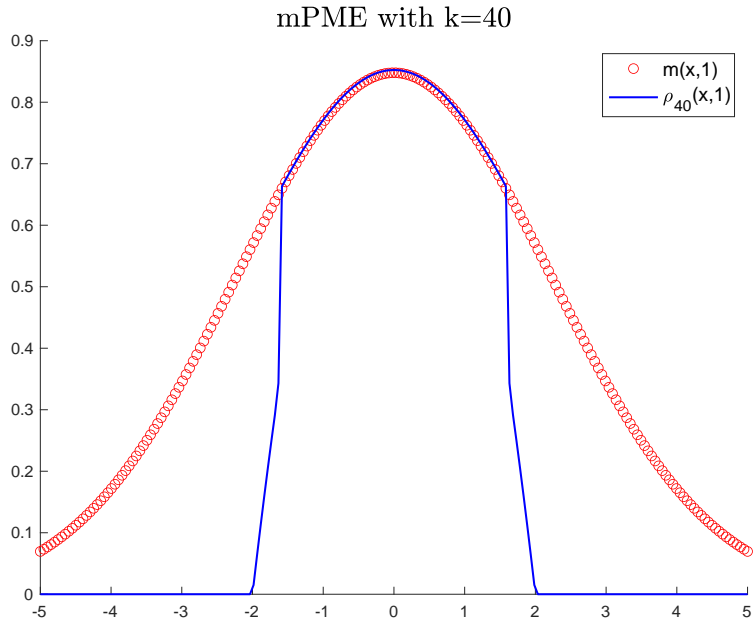}
    \caption{Numerical Simulations of the mPME $\eqref{PME_Density}$ with $f=\vec{b}=0$. In this figure, we have $k=40$ and $m(x,t)=\exp(-t/6-x^2/10)$, which corresponds to the case of congestion, and we see the solution $\varrho_{40}$ becomes saturated on $m$. }
\end{figure}



By combining the techniques in the papers \cite{perthame2014hele, guillen2022hele, david2021incompressible}, we were able to take the Hele-Shaw limit in $L^1(Q_T)$ of $\eqref{PME_Density}$ and $\eqref{PME_pressure1}$ under mild assumption of $m=m(x,t)$,  $\vec{b}$, $f$, and the initial data. In addition, by using  methods from \cite{perthame2014hele}, we were able to derive that the limiting density and pressure are the unique weak solutions to $(\ref{Hele-Shaw})$. Then by using the arguments in \cite{guillen2022hele}, we derive the normal velocity law for $\p \Sigma(t)$ where $\Sigma(t) = \{ x : p_{\infty}(x,t)>0 \}$ in the viscosity sense. The velocity law $V$ is given by
\begin{equation} V =  \left[ -\frac{m \nabla p_{\infty}}{ (m- \varrho^E)} - \vec{b} \right] \cdot \vec{\nu}, \label{velocity_law} \end{equation} where $\vec{\nu}$ is the outward normal of $\p \Sigma(t)$ and $\varrho^E$ is the external density (the limit of $\varrho_{\infty}$ from outside $\Sigma(t)$). The velocity law $(\ref{velocity_law})$ shows that a typical feature of the $\p \Sigma(t)$ is that it can have infinite normal velocity. \\

Then we focus on the case of when
\begin{equation} m_t < mf + \nabla \cdot(m \vec{b}) \text{ in } Q_T, \label{congestion}  \end{equation} which we call the congestion case. This assumption has been used when $m$ is a constant in \cite{kim2019singular} and corresponds to when $m$ grows strictly slower than the external density.  In the congestion case, we were able to  derive an explicit pointwise formula for the limiting density up to $\p \Sigma(t)$ that is consistent with the identification in \cite{kim2019singular} when $m$ is constant. This identification formula implies that when we are the congestion case, if the initial density is a patch, then it is as a patch for all time (that is if $\varrho_{\infty}(x,0)= m(x,0) \chi_{A(0)}$, then $\varrho_{\infty}(x,t) = m(x,t) \chi_{A(t)}$). \\

We were able to explicitly identify the limiting density by deriving a novel uniform Aronson-B{\'e}nilan estimate in the congestion case. This estimate appears to be new even for the case of $m$ being constant with non-zero $\vec{b}$. An important consquence of the estimate is that it leads to a uniform time derivative estimate of the pressure $p_k$ along the streamlines for all $k>1$ is large
\begin{equation} \frac{d}{dt} p_k(X(t,x_0),t) \geq -\left( C + \frac{1}{t} \right) p_k(X(t,x_0),t), \label{new_time_derivative} \end{equation} where the constant $C>0$ is independent of $k$. For each $x_0 \in \R^d$, the streamline $X(t,x_0)$ is defined as the (unique) solution of
\begin{equation}
\begin{cases}
    \frac{d}{dt} X(t,x_0) = - \vec{b}(X(t,x_0),t) \text{ in } (0,\infty) \\ 
    X(0,x_0) = x_0
\end{cases}.
\end{equation} Note in the special case of $\vec{b} \equiv 0$ that the streamlines reduce to the identity map. \\

We use \eqref{new_time_derivative} to show that the normalized density $v_{\infty} := \varrho_{\infty}/m$ is non-decreasing with respect to the streamlines. This means that if $t_1 < t_2$, then $v_{\infty}(X(t_1,x_0),t_1) \leq v_{\infty}(X(t_2,x_0),t_2)$. This estimate also implies the geometric property that $\Sigma(t)$ is non-decreasing with respect to the streamlines. That is if $x_0 \in \Sigma(t)$, then the streamline containing the point $(x_0,t)$ is in $\Sigma(s)$ for all $s \geq t$. The last monotonicity property is enough to allow us to characterize the limiting density up to $\p \Sigma(t)$ pointwise.  \\


Similar uniform time derivative estimates of the pressure are only known in the literature when $m$ is constant in the congestion case with $\vec{b}=0$ or $m$ is constant with $f=\vec{b}=0$. Uniform lower bounds on $\p_t p_k$ similar to $\eqref{new_time_derivative}$ played an essential role in taking the Hele-Shaw limit for weak solutions with no drift in the congestion case \cite{caffarelli1987asymptotic, benilan1989limit, gil2001convergence, gil2003boundary, perthame2014hele, kim2018porous}.  \\

The congestion assumption \eqref{congestion} is essentially necessary to have estimate such as $(\ref{new_time_derivative})$. Indeed, the the congestion assumption is formally equivalent to having $p_{\infty}$ satisfy $\nabla \cdot(m \nabla p_{\infty}) < 0$ inside $\Sigma(t)$ for all $t \geq 0$. If instead there was some time $t_0$ such that $\nabla \cdot(m \nabla p_{\infty})(\cdot,t_0)  \geq 0$ in $\Sigma(t_0)$ where $\Sigma(t_0)$ open, connected, and smooth, then the maximum principle for uniformly elliptic operators implies that $p_{\infty} \equiv 0$. That is the set $\Sigma(t)$ instantaneously collapsed at time $t_0$. However, if the time derivative estimate $(\ref{new_time_derivative})$ was true for this case, then the estimate implies for positive time that $\Sigma(t)$ is non-decreasing with respect to the streamlines, which is a contradiction. So uniform time derivative estimate such as $\eqref{new_time_derivative}$ can not hold in general when there is a $t_0>0$ such that $ [\partial_t m - mf - \nabla \cdot(m \vec{b})](\cdot,t_0) = \nabla \cdot (m \nabla p_{\infty}))(\cdot,t_0) \geq 0$ on $\Sigma(t_0)$. \\

We also mention that it is necessary to use some form of weak theory (viscosity solutions, gradient flow in Wasserstein space, or weak solutions) to understand $(\ref{Hele-Shaw})$. It is well known even for $m$ being a constant that if we start with smooth data, the pressure can become discontinuous in finite time due to topological changes (see \cite{kim2003uniqueness, perthame2014hele} for examples of how the pressure can become discontinuous in finite time). In the recent paper \cite{kim2022regularity}, the authors produced some regularity results of $\p \Sigma(t)$ for constant $m$ under the congestion assumption for continuous viscosity solutions. \\

Now we review the literature on similar Hele-Shaw limits. These Hele-Shaw limits are well studied when $m$ is a constant and until recently, these limits were only investigated when the system is under congestion with $m$ constant or when $f=\vec{b}=0$ with $m(x,t)$ either being constant or $1/t$. The weak limit of $\eqref{PME_Density}$ when  $m \equiv 1$, $f=0$, and $\vec{b}=0$ was studied in \cite{caffarelli1987asymptotic} and was again studied in \cite{benilan1989limit} with more general initial data.  This investigation was continued in \cite{gil2001convergence} and \cite{gil2003boundary} when the equations $(\ref{PME_Density})$ were posed on a domain $\Om \times (0,\infty)$ with general initial data and pressure boundary data.  In \cite{gil2003boundary} the authors considered the case of $m(t)=1/t$ of $(\ref{Hele-Shaw})$ in their analysis. Then in \cite{kim2003uniqueness}, the author showed uniqueness and existence of \eqref{Hele-Shaw} when $m$ is constant with $f$ and $\vec{b}=0$ for viscosity solutions. In addition, in \cite{perthame2014hele} and \cite{kim2018porous} the authors dealt with the Hele-Shaw limits with the condition of $f>0$,  $\vec{b}=0$, and $m \equiv 1$ in a weak and viscosity sense respectively.  In \cite{alexander2014quasi} the authors were able to incorporate a non-zero drift when $\vec{b}(x,t) = \nabla \Phi(x)$ such that $\Delta \Phi > 0$ with $f=0$ and $m \equiv 1$ into their analysis by using viscosity solutions and gradient flows in Wasserstein space. Then in \cite{kim2019singular} the authors were able to deal with the case of $f + \nabla \cdot \vec{b} > 0$ by using viscosity solutions. \\

The above literature primarily focused on the case when $m$ is constant with congestion or $f=\vec{b}=0$. These assumptions were used when $\vec{b}=0$ to obtain similar time derivatives as the one in $\eqref{new_time_derivative}$ to obtain the weak limit. In \cite{kim2019singular}, which dealt with $m \equiv 1$ when there is congestion with drift, $\Sigma(t)$ non-decreasing along streamlines played an essential part in their analysis. As mentioned earlier, these estimates and monotonicity property of $\Sigma(t)$ are no longer generally true when there is a general source term and drift.  \\ 

In \cite{guillen2022hele}, the authors were able to remove the monotonicity conditions when there is a source term $f$ with $\vec{b}=0$ in the exterior of a bounded smooth domain with pressure boundary data and $m$ is constant. In addition, the authors of \cite{david2021incompressible} were also able to take the Hele-Shaw limit with $m$ constant without monotonicity conditions on their source and drift terms.  \\


The authors of \cite{woodhouse2018motion} investigated \eqref{PME_Density} and its Hele-Shaw limit using the theory of gradient flow in Wasserstein space (see \cite{otto2001geometry} for the gradient flow structure of the PME)  when $f=0$, $\vec{b}=0$,  $\partial_t m(x,t) < 0$ (congestion case), and $m(x,t)$ is approximately linear. The approximately linear constraint on $m$ was used in \cite{woodhouse2018motion} to ensure a well defined limit from their JKO approximation scheme and was conjectured to be necessary to take the Hele-Shaw limit of \eqref{PME_Density} and \eqref{PME_pressure1}. In comparison, our assumptions are rather mild (see Section \ref{notation}) and we do not need the approximately linear condition of $m(x,t)$ used in \cite{woodhouse2018motion}. Our methods appear to be the first to be able to take the Hele-Shaw limit for a wide range of $m$ that varies in space and time.


\section{Notation, Assumptions, and Summary of Results} \label{notation}
 Throughout this paper, we denote the pair $(\varrho_k,p_k)$ as weak solutions of \eqref{PME_Density} and \eqref{PME_pressure1} respectively with initial conditions $\varrho_k^{(0)}$ and $p_k^{(0)}$. 
\begin{definition}[Weak Solutions to mPME] We say that $\varrho_k$ is a weak sub solution to   \eqref{PME_Density} with initial data $\varrho_k^0 \in L^2(\R^d)$ if $\varrho_k \in L^2(Q_T)$ and $v_k := \varrho_k/m$ satisfies $v_k^k \in H^1(Q_T)$ such that for all non-negative test functions $\varphi \in C^{\infty}_c(\R^d \times [0,\infty))$
\begin{equation}
	 \iint_{Q_T} -\varrho_k \p_t \varphi \leq  \int_{\R^d} \varphi(t=0) \varrho_k^0(x) + \iint_{Q_T} -m\nabla \varphi \cdot \nabla v_k^k - \varrho_k(\nabla \varphi \cdot \vec{b}) + f \varphi \varrho_k, \label{weak_sub_soln} \end{equation} where $Q_T := \R^d\times (0,T)$.
\end{definition}

Similarly, we say that $\varrho_k$ is a weak super solution if the inequality in \eqref{weak_sub_soln} is reversed. Then we say that $\varrho_k$ is a weak solution if it is both a weak sub and super solution to \eqref{PME_Density}. Now if we denote $p_k := \frac{k}{k-1} \left( \frac{\varrho_k}{m} \right)^{k-1}$, then we say that $p_k$ is a weak solution (respectively super/sub) solution of \eqref{PME_pressure1} if  $\varrho_k$ is a weak (respectively super/sub) solution of \eqref{PME_Density}.
\\ 

We refer the reader to Section \ref{weak_mPME} where we discuss properties of weak solutions of \eqref{PME_Density} such as existence, comparison principle, and smooth approximation. As standard in the PME literature \cite{vazquez2007porous}, when we obtain a priori estimates, we treat $\varrho_k$ as if it is smooth because we can instead use our smooth approximations and take limits to obtain the estimate for our non-smooth $\varrho_k$. \\

 To simplify our notation we let $F := (\nabla \cdot(m \vec{b}) + mf - \p_t m)/m$, then \eqref{PME_pressure1} simplifies to
\begin{equation}
    \p_t p_k = |\nabla p_k|^2 + \nabla p_k \cdot \vec{b} + (k-1)p_k \left( \frac{1}{m} \nabla \cdot(m \nabla p_k) + F \right). \label{PME_pressure}
\end{equation}
\noindent We also let $w_k := \frac{1}{m} \nabla \cdot(m \nabla p_k)$ because this quantity frequently shows up in our calculation. In addition, for many computations, it will be easier to work with the normalized density $v_k := \varrho_k/m$ compared to the density $\varrho_k$. The normalized density solves
\begin{equation} \partial_t v_k = \frac{1}{m}\nabla \cdot(m \nabla v_k^k)+ \nabla v_k \cdot \vec{b} + Fv_k   \text{ in } Q_T. \label{normalized-density} \end{equation}  

We  will refer to the first equation in \eqref{Hele-Shaw} as the density equation and the second equation as the pressure equation. And we will use the letter $C$ to denote a constant that is independent of $k$ that may change from line to line. And we denote $\vec{b} = (b_1,...,b_d)^T$ as the negative drift, $f$ as the source term, and $m(x,t)$ as the hard constraint. In addition, because many computations will involve derivatives of $m$ being divided by $m$, we will use the notation $\lambda := \log(m)$ to simplify our expressions. Also we will also denote for $\tau,t>0$ the sets $Q_t := \R^d \times (0,t)$ and $Q_{\tau,t} := \R^d \times (\tau,t)$ and we fix a $T>0$. Finally the operators $\nabla, \dive$, and $\Delta$ will only be taken in the space variables.   \\

Now we list some assumptions that will be used throughout this paper.

\begin{assumption} [Hard Constraint, Drift, and Source Term Assumptions] We assume that $m(x,t)$, $\vec{b}(x,t)$, and $f(x,t)$ are in $C^{\infty}(Q_T)$ with bounded derivatives of all order such that there is a $\delta > 0$ with $m(x,t) \geq \delta.$ \label{Assum_1}
\end{assumption}

\begin{remark} If $m$ was allowed to go down to zero, the operator $\nabla \cdot(m \nabla p_{\infty})$ in $\eqref{Hele-Shaw}$ would become degenerate elliptic on the zero level set of $m$. But if a Hele-Shaw limit exists when $m$ is allowed to go down to zero, it seems that the zero level set of $m$ will lead to an additional obstacle problem for $\varrho_{\infty}$ on $\p \{x:m(x,t)>0\}$.
\end{remark}

\begin{assumption} [Initial Density and Pressure] We will always assume that the initial density $\varrho_k^{(0)}(x) = \varrho_k(x,0) \in W^{1,1}(\R^d)$ is such that $\varrho_k^0 \geq 0$ almost everywhere. Also we will assume that there is a compact set $K \subset \R^d$ such that for all $k>1$ we have
\[ \text{supp}(\varrho_k^{(0)}) \subset K \] and
\[ \sup_{k>1}||\varrho_k^{(0)}||_{L^{\infty}(\R^d)} \leq C \text{ , } \sup_{k>1}||\varrho_k^{(0)}||_{W^{1,1}(\R^d)} \leq C \text{ , and } \sup_{k>1}||p_k^{(0)}(x)||_{L^{\infty}(\R^d)} \leq C. \]  Finally we assume there is a limiting density $\varrho_{\infty}^{(0)}(x)$ such that $\varrho_k^{(0)} \rightarrow \varrho_{\infty}^{(0)}$ in $L^1(\R^d)$. \label{Assum_2}
\end{assumption}

\begin{remark} The purpose of the condition of $\varrho_k^{(0)} \rightarrow \varrho_{\infty}^{(0)}$ in $L^1(\R^d)$ in Assumption \ref{Assum_2} is to specify the initial data for the density in \eqref{Hele-Shaw}.
\end{remark}
\begin{assumption} [Hard Constraint Restrictions] We assume that either $m(x,t)=m(|x|,t)$ ($m$ is radial in space), $d=1$ (one space dimension), or there is an $R>0$ such that for $\e=\e(\delta)>0$ sufficiently small, we have that $|\nabla m(x,t)| \leq \e/|x|$ for $|x| \geq R$ . $\label{hard_constraint_restriciton}$ 
\end{assumption}

\begin{remark} The only purpose of Assumption $\ref{hard_constraint_restriciton}$ is to construct a super solution of the pressure equation $(\ref{PME_pressure})$ to derive uniform $L^{\infty}$ and compact support bounds on the pressure. 
\end{remark}

With these assumptions, we have the following compactness theorem:

\begin{theorem} If Assumptions $\ref{Assum_1}$, $\ref{Assum_2}$, and $\ref{hard_constraint_restriciton}$ are met, then the sequence of pairs $\{ (\varrho_k,p_k) \}_{k>1}$ has a unique limit in $L^1(Q_T)$ as $k \rightarrow \infty$. Also the limiting densities and pressures are in the regularity class $(\varrho_{\infty},p_{\infty}) \in L^{\infty}( 0,T; L^1(\R^d) \cap L^{\infty}(\R^d)) \times L^2( 0,T; H^1(\R^d) \cap L^{\infty}(\R^d)) $ with compact support. Also the limiting density and pressure $(\varrho_{\infty},p_{\infty})$ are a weak solution to the density equation in $(\ref{Hele-Shaw})$ with initial data $\varrho_{\infty}^{(0)}$. \label{Compactness-Theorem}
\end{theorem}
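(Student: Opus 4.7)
The plan is to prove Theorem \ref{Compactness-Theorem} in four stages: establish $k$-uniform a priori estimates on $(\varrho_k,p_k)$; extract strong $L^1$ compactness of $\varrho_k$ and weak compactness of $p_k$, $\nabla p_k$; pass to the limit in the weak formulation \eqref{weak_sub_soln}; and finally promote subsequential convergence to full-sequence convergence via uniqueness of weak solutions to \eqref{Hele-Shaw}.

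For Stage 1 I need, uniformly in $k>1$ large, the bounds
\[
\|\varrho_k\|_{L^\infty(Q_T)} + \|p_k\|_{L^\infty(Q_T)} + \sup_{t\in[0,T]} \|\varrho_k(t)\|_{L^1(\R^d)} + \|\nabla p_k\|_{L^2(Q_T)} \leq C
\]
together with a $k$-uniform ball $B_R$ containing $\mathrm{supp}(\varrho_k(\cdot,t))$ for every $t\in[0,T]$. The $L^\infty$ and compact-support bounds will come from constructing an explicit supersolution of \eqref{PME_pressure}. The natural candidate is a paraboloid-type barrier $\overline p(x,t) = A(t)(R(t)^2 - |x|^2)_+$: computing $\partial_t \overline p - |\nabla \overline p|^2 - \nabla\overline p\cdot\vec b - (k-1)\overline p(\overline w + F)$ produces a dominant $(k-1)A^2$ term proportional to $d + x\cdot\nabla\lambda$ which, when $m$ is radial or $d=1$, is manifestly positive inside the support, and in the general case is made positive at infinity exactly by Assumption \ref{hard_constraint_restriciton}. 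The comparison principle from Section \ref{weak_mPME} then yields $p_k \leq \overline p$ pointwise, giving both uniform $L^\infty$ and compact support for $p_k$. The bound $\varrho_k = m v_k$ uniform in $k$ follows from $v_k = ((k-1)p_k/k)^{1/(k-1)}$ combined with the uniform $L^\infty$ bound on $p_k$. The $L^1$ bound is obtained by testing \eqref{PME_Density} against a spatial cutoff and using the support estimate, and the $L^2$ gradient bound on $p_k$ by multiplying \eqref{PME_Density} by $p_k$, integrating by parts using $\varrho_k\nabla p_k = m\nabla v_k^k$, and absorbing the drift and source via Young's inequality and the Stage 1 $L^\infty$ bounds.

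For Stage 2 I propagate the $W^{1,1}$ bound on the initial data forward in time through the $L^1$-contraction structure of \eqref{PME_Density}, to obtain spatial equi-integrability of $\varrho_k$ uniformly in $k$; combined with the bounds of Stage 1, the equation controls $\partial_t\varrho_k$ in $L^1(0,T;W^{-1,1}_{\mathrm{loc}})$. Aubin--Lions then produces a subsequence with $\varrho_k \to \varrho_\infty$ strongly in $L^1(Q_T)$, $p_k \weaklys p_\infty$ in $L^\infty(Q_T)$, and $\nabla p_k \weakly \nabla p_\infty$ in $L^2(Q_T)$. In Stage 3 each linear-in-$\varrho_k$ term of \eqref{weak_sub_soln} passes by strong $L^1$ convergence; the diffusive term, rewritten as $\iint m\nabla\varphi\cdot\nabla v_k^k = \iint \nabla\varphi\cdot(\varrho_k\nabla p_k)$, passes by pairing the strongly converging $\varrho_k\nabla\varphi$ with the weakly converging $\nabla p_k$ in $L^2$. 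The constraint $\varrho_\infty\leq m$ follows from $v_k \leq ((k-1)\|p_k\|_\infty/k)^{1/(k-1)} \to 1$, so $(\varrho_\infty,p_\infty)$ solves the density equation in \eqref{Hele-Shaw} with initial datum $\varrho_\infty^{(0)}$. Stage 4 is then immediate: any two $L^1$-subsequential limits are weak solutions with the same initial datum and therefore coincide by the uniqueness theorem for \eqref{Hele-Shaw} stated earlier, upgrading subsequential to full-sequence convergence.

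The main obstacle is Stage 1. Because $m$ depends on both $x$ and $t$, the pressure equation \eqref{PME_pressure} carries the indefinite-sign terms $(k-1)p_k\,\nabla\lambda\cdot\nabla p_k$ and $(k-1)p_k F$ that are absent in the classical constant-$m$ setting and that rule out the naive paraboloid barriers. Assumption \ref{hard_constraint_restriciton} is tailored precisely so that the first term can be absorbed at infinity by a radial barrier, while the $F$-term is controlled by choosing $A(t)$ large enough relative to $\|F\|_\infty$; without such a decay hypothesis on $\nabla m$ it is unclear that a $k$-uniform, compactly supported upper barrier for $p_k$ exists at all. A secondary technical point is the propagation of the $W^{1,1}$ bound in Stage 2, which will require the smooth approximation scheme of Section \ref{weak_mPME} so that differentiation of \eqref{PME_Density} is legitimate.
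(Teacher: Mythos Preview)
Your four–stage plan captures the overall architecture, but it misses the central analytical ingredient and consequently does not prove the full statement.

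\textbf{The gap.} The theorem asserts that \emph{both} $\varrho_k$ and $p_k$ converge in $L^1(Q_T)$. Your Stage~2 produces strong $L^1$ compactness only for $\varrho_k$, via spatial $BV$ plus $\partial_t\varrho_k\in L^1(0,T;W^{-1,1}_{\rm loc})$ and Aubin--Lions; for $p_k$ you obtain only weak-$*$ $L^\infty$ and weak $L^2$ convergence of $\nabla p_k$. That is not enough. Pointwise convergence of $v_k$ does \emph{not} imply pointwise convergence of $p_k=\tfrac{k}{k-1}v_k^{k-1}$ (e.g.\ $v_k=1-c/k\to 1$ but $v_k^{k-1}\to e^{-c}$), so the strong compactness of $p_k$ genuinely requires a separate argument. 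The paper supplies this via a generalized Aronson--B\'enilan estimate
\[
\frac{1}{m}\nabla\!\cdot(m\nabla p_k)\ \ge\ -\frac{2}{(k-1)t}-K_1,
\]
obtained by deriving a parabolic inequality for $w_k=\tfrac{1}{m}\nabla\!\cdot(m\nabla p_k)$ and comparing with a barrier of the form $-\tfrac{2}{(k-1)(t+\tau)}+p_k-K_1$. This lower bound yields $|w_k|\in L^1(Q_{\tau,T})$ uniformly, then a lower bound on $\partial_t v_k$, then $\|\partial_t v_k(t)\|_{L^1}\le C(\tau,T)$, and finally (via the auxiliary estimate $\iint kv_k^{k-1}|\partial_t v_k|\le C$) the bound $\|\partial_t p_k\|_{L^1(Q_{\tau,T})}\le C(\tau,T)$. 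Together with the spatial $L^2$ bound on $\nabla p_k$ this places $p_k$ in $W^{1,1}(Q_{\tau,T})$ uniformly, giving the missing strong $L^1$ compactness. Your proposal skips this entire mechanism.

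\textbf{A secondary consequence.} Without strong (hence a.e.) convergence of $p_k$ you also do not establish the saturation relation $p_\infty(m-\varrho_\infty)=0$, which is part of Definition~2.2 of a weak solution. The paper reads it off from $v_kp_k=C_kp_k^{k/(k-1)}$ after passing to pointwise limits. Your Stage~3 identification of the diffusive limit as $\iint\nabla\varphi\cdot(\varrho_\infty\nabla p_\infty)$ is correct but lands on the wrong expression: the target is $\iint\nabla\varphi\cdot(m\nabla p_\infty)$, and reconciling the two requires exactly the saturation relation (plus $\nabla p_\infty=0$ a.e.\ on $\{p_\infty=0\}$). So the gap in Stage~2 propagates into Stage~3.

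In short: Stages~1 and~4 are fine and match the paper, but Stage~2 lacks the semi-convexity/time-derivative machinery that is the heart of the compactness argument for the pressure, and Stage~3 is incomplete without it.
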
 

\begin{definition} [Weak Solutions of Hele-Shaw Density] We say that the pair $(\varrho_{\infty},p_{\infty})$ is a solution to the density equation with initial data $\varrho_{\infty}^{(0)}$ for $\eqref{Hele-Shaw}$ if for all test functions $\varphi \in C^{\infty}_c(Q_T)$
\[ \int_{\R^d} \varphi(t=0) \varrho_{\infty}^{(0)} + \iint_{Q_T} -(\p_t \varphi) \varrho_{\infty} = \iint_{Q_T} - \nabla \varphi \cdot( m\nabla p_{\infty} + \varrho_{\infty} \vec{b}) + \varphi f \varrho_{\infty}  \] and $\varrho_{\infty}(x,t) \leq m(x,t)$ with $p_{\infty}(m - \varrho_{\infty})=0$ almost everywhere.
\end{definition}

\begin{remark}  For our analysis, we need an extra assumption to be able to show $p_{\infty}$ is a weak solution to the pressure equation in $(\ref{Hele-Shaw})$. 
\end{remark}

We will refer to showing that $p_{\infty}$ is a weak solution to the pressure equation in $(\ref{Hele-Shaw})$ as the complimentary relationship.

\begin{assumption} [Complimentary Relationship Condition] Assume there is a constant $C>0$ independent of $k$ such that
\[ \int_{\R^d} |\nabla p_k(x,0)|^2dx \leq C. \]	\label{complimentary_assumpution}
\end{assumption}

 As in \cite{perthame2014hele} the complimentary relationship is equivalent to $\nabla p_k \rightarrow \nabla p_{\infty}$ in $L^2(Q_T)$. So this control of the $
||\nabla p_k(x,0)||_{L^2(\R^d)}$ seems rather natural. However, in \cite{guillen2022hele} the authors obtained the complimentary relationship without this assumption with no drift and $m$ constant by obtaining an obstacle problem formulation of the pressure. When drift is added into the system, it is not clear if the obstacle problem formulation in \cite{guillen2022hele} still holds. \\

 Assumption \ref{complimentary_assumpution} was also used in \cite{david2021incompressible} to obtain the complimentary relationship with $m$ constant with source and drift terms. We will modify the proof in \cite{david2021incompressible} to obtain the optimal $L^4(Q_T)$ bounds on $\nabla p_k$ (see \cite{david2021free} for an example of the optimality of the bounds), which will give us enough compactness to obtain the complimentary relationship.

\begin{theorem} [Complimentary Relationship] Assume that Assumptions $\ref{Assum_1}$, $\ref{Assum_2}$,  $\ref{hard_constraint_restriciton}$, and $\ref{complimentary_assumpution}$ are met, then for any $\varphi \in C^{\infty}_c(Q_T)$ we have that
\[ \iint_{Q_T} -mp_{\infty}[\nabla \varphi \cdot \nabla p_{\infty}] - m \varphi |\nabla p_{\infty}|^2 + \varphi p_{\infty} mF = 0. \] That is the limiting pressure satisfies the complimentary relationship
\[ p_{\infty} \left[ \nabla \cdot(m \nabla p_{\infty}) + mF \right]  = 0 \text{ in } Q_T \] in the sense of distributions.
\label{Complimentary-Theorem}
\end{theorem}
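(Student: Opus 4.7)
The strategy is to test the pressure equation \eqref{PME_pressure} against $\varphi m/(k-1)$ with $\varphi \in C^\infty_c(Q_T)$, use the identity $m p_k w_k = p_k \nabla\cdot(m\nabla p_k)$, and perform one integration by parts to transfer the Laplacian onto the test function. After rearranging, this produces the exact identity
\begin{equation*}
-\iint_{Q_T} m p_k\,\nabla\varphi\cdot\nabla p_k - \iint_{Q_T} m\varphi|\nabla p_k|^2 + \iint_{Q_T} \varphi m p_k F = \frac{1}{k-1}\iint_{Q_T} \varphi m\bigl[\partial_t p_k - |\nabla p_k|^2 - \nabla p_k\cdot\vec{b}\bigr],
\end{equation*}
whose limit as $k\to\infty$ is precisely the complimentary relationship in the statement. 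The plan therefore reduces to (i) showing the right-hand side vanishes, and (ii) passing to the limit term by term on the left.

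For (i), the only nontrivial ingredient I need is a uniform energy bound $\|\nabla p_k\|_{L^2(Q_T)} \leq C$. This is where Assumption \ref{complimentary_assumpution} enters: a Gronwall-type estimate for $\|\nabla p_k(\cdot,t)\|_{L^2}^2$ obtained by testing \eqref{PME_pressure} against an appropriate derivative combination, combined with the $L^\infty$ bounds and the compact support from Theorem \ref{Compactness-Theorem}, will close because the initial gradient is bounded uniformly in $k$ by hypothesis. Granted this estimate, the two gradient contributions on the right of the identity are $O(1/(k-1))$, while the time-derivative contribution is handled by integrating by parts in $t$ against the compactly supported $\varphi m$, using only $\|p_k\|_{L^\infty(Q_T)}\leq C$. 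For (ii), the mixed gradient term and the source term present no difficulty: they pass to the limit via the pairing of $p_k \to p_\infty$ strongly in $L^2(Q_T)$ (from the $L^1$-convergence of Theorem \ref{Compactness-Theorem} combined with uniform $L^\infty$ bounds) with $\nabla p_k \weakly \nabla p_\infty$ weakly in $L^2(Q_T)$.

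The main obstacle is passing to the limit in the quadratic term $\iint m\varphi|\nabla p_k|^2$, which demands strong $L^2_{\mathrm{loc}}(Q_T)$ convergence of $\nabla p_k$ rather than mere weak convergence. To obtain this I will adapt the argument of \cite{david2021incompressible} to derive the uniform bound $\|\nabla p_k\|_{L^4(Q_T)} \leq C$ by multiplying \eqref{PME_pressure} by $-w_k$, integrating by parts, and absorbing the lower-order terms produced by the $x$-dependence of $m$ and the presence of $\vec b$ and $F$ into the good diffusive quantity $(k-1)\iint m\,p_k\,w_k^2$ through Young's inequality. Together with a distributional bound on $\partial_t p_k$ read off directly from \eqref{PME_pressure} that places it in a suitable $W^{-1,q}$-space, this $L^4$ control is precisely what is needed to invoke an Aubin-Lions argument and upgrade the weak $L^2$ convergence of $\nabla p_k$ to strong $L^2_{\mathrm{loc}}$ convergence. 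The most delicate technical step, and the main new difficulty compared to the constant-$m$ case treated in \cite{david2021incompressible}, is the appearance of $\nabla m$-cross terms in the $L^4$ computation, whose control requires the full smoothness and positivity of $m$ granted by Assumption \ref{Assum_1}; once these are absorbed, the passage to the limit in the identity above completes the proof.
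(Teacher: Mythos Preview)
Your approach is essentially the paper's: both pass to the limit in a weak form of the pressure equation, the crux being strong $L^2(Q_T)$ convergence of $\nabla p_k$ obtained from the uniform $L^4$ estimate of \cite{david2021incompressible} (adapted to variable $m$; the paper multiplies by $\nabla\cdot(m\nabla p_k)+mF$ rather than $-w_k$, but this is cosmetic) together with an Aubin--Lions argument. One small correction: the space-time bound $\|\nabla p_k\|_{L^2(Q_T)}\leq C$ you invoke for step (i) already holds without Assumption~\ref{complimentary_assumpution} (see Lemma~\ref{L2_Pressure_Gradient}); that assumption is genuinely needed only in the $L^4$ computation, where the integration by parts in time produces the initial boundary term $\int_{\R^d}|\nabla p_k(x,0)|^2\,dx$.
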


 Then with the existence and uniqueness of the generalized Hele-Shaw problem, we focus on geometric properties of $\Sigma(t) := \{x:p_{\infty}(x,t)>0\}$. 

 \begin{theorem} The normal velocity $V$ of the free-boundary $\p \Sigma(t)$ satisfies
 \[ V =  \left[ - \frac{m \nabla p_{\infty}}{(m-\varrho^E)} - \vec{b} \right] \cdot \vec{\nu} \] in the sense of comparison with classical barriers where $\varrho^E$ is the external density. The external density is the limit of the density $\varrho_{\infty}$ from outside the saturated region $\Sigma(t)$. See Section $\ref{Velocity_Law}$ for a definition of classical barriers. \label{velocity-law-theorem}
 \end{theorem}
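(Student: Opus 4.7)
The plan is to adapt the classical barrier framework of \cite{guillen2022hele} to our setting with variable constraint $m$, drift $\vec{b}$, source $f$, and nonzero external density $\varrho^E$. A classical strict sub-barrier $\phi$ is a $C^{2,1}$ function whose positivity set $\Omega_\phi(t) := \{\phi(\cdot,t)>0\}$ has smooth boundary, which is a strict classical sub-solution of the elliptic equation $\nabla\cdot(m\nabla\phi) + mF \leq -\eta$ inside $\Omega_\phi$, and whose boundary moves with outward normal velocity satisfying
\[ V_\phi < \left[ -\frac{m\nabla\phi}{m - \varrho_\phi^E} - \vec{b} \right]\cdot\vec{\nu}_\phi - \eta \]
for some $\eta>0$, where $\varrho_\phi^E$ is the external density associated with $\Omega_\phi$, obtained from the transport equation $\p_t \varrho = \nabla\cdot(\varrho\vec{b}) + f\varrho$ (valid wherever $p_\infty=0$) integrated along the streamlines $X(t,x_0)$. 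A strict super-barrier is defined by reversing the strict inequalities. The theorem will follow from the key proposition: if $\phi$ is a strict sub-barrier with $\phi(\cdot,t_0) \leq p_\infty(\cdot,t_0)$, then $\phi(\cdot,t) \leq p_\infty(\cdot,t)$ for all $t \geq t_0$ (and analogously for super-barriers).

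To prove the proposition I would construct, for each sufficiently large $k$, a modification $\phi_k$ converging uniformly to $\phi$ that is a classical strict sub-solution of the mPME pressure equation \eqref{PME_pressure}. Away from $\p\Omega_\phi$ the strict interior inequality on $\nabla\cdot(m\nabla\phi) + mF$ is amplified by the factor $(k-1)\phi$ in \eqref{PME_pressure} and dominates the $k$-independent terms, so a small perturbation of order $1/k$ suffices in the bulk. Near $\p\Omega_\phi$ I would glue $\phi$ to an inner traveling-wave profile of the PME that bridges the interior to the exterior density $\varrho_\phi^E$ over a boundary layer of width $O(k^{-1/2})$; the wave moves with speed precisely $[-m\nabla\phi/(m-\varrho_\phi^E) - \vec{b}]\cdot\vec{\nu}_\phi$, and the strict velocity inequality in the barrier definition provides the slack needed to make $\phi_k$ a strict sub-solution up to and including the interface. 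The comparison principle for the mPME from Section \ref{weak_mPME} then yields $\phi_k \leq p_k$ for all $t \geq t_0$, and Theorem \ref{Compactness-Theorem} delivers $\phi \leq p_\infty$ in the limit $k\to\infty$.

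The main obstacle is the inner traveling-wave construction in the boundary layer, which is precisely where the factor $m-\varrho^E$ in \eqref{velocity_law} is produced. In the classical Hele-Shaw setting (constant $m$, $\vec{b}=0$, $\varrho^E=0$) the inner profile is the standard PME traveling wave connecting density $m$ to $0$ at speed $-\p_\nu p_\infty$; here the nonzero external density shifts the far-field of the wave from $0$ to $\varrho_\phi^E$, producing the denominator $m-\varrho^E$ by a mass-balance argument at the interface, while the drift shifts the wave speed by $-\vec{b}\cdot\vec{\nu}$ and the variable $m(x,t)$ forces the expansion to be performed in coordinates adapted to $m$ along the streamlines $X(t,x_0)$. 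Carrying out the matched asymptotic expansion uniformly in $k$, controlling the error committed when gluing the inner wave to the outer barrier $\phi$, and verifying the quadratic-type decay of $\phi$ near $\p\Omega_\phi$ used in the wave ansatz is the most delicate technical step, although I expect the argument of \cite{guillen2022hele} to carry through once the additional $\nabla m$, $\vec{b}$, and $\varrho^E$ contributions are tracked carefully.
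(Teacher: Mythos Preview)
Your proposal is a plausible route, but it takes a genuinely different and much more laborious path than the paper. The paper does \emph{not} go back to the finite-$k$ mPME at all: instead it works entirely at the level of the limiting Hele-Shaw system. Concretely, the paper defines a classical super-solution (resp.\ sub-solution) as a \emph{pair} $(\rho,\phi)$ with $\rho = m\chi_{\{\phi>0\}} + \rho^E\chi_{\{\phi=0\}}$, where $\phi$ satisfies the elliptic inequality in $\{\phi>0\}$ and the velocity-law inequality on $\partial\{\phi>0\}$, and $\rho^E$ is a super- (resp.\ sub-) solution of the transport equation outside. A direct integration-by-parts computation (analogous to Proposition~7.1 in \cite{guillen2022hele}) then shows that $(\rho,\phi)$ is a weak super- (resp.\ sub-) solution of the \emph{density} equation in \eqref{Hele-Shaw}. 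Since the weak comparison principle for the limiting density equation was already established in Theorem~\ref{comparison}, the velocity law in the barrier sense follows immediately by comparing $(\varrho_\infty,p_\infty)$ with these classical barriers.

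What this buys the paper is that there is no boundary-layer construction, no traveling-wave profile, no matched asymptotics, and no limit in $k$ to control: all of the analytic work (existence, uniqueness, and comparison for the limiting system) was front-loaded in Sections~3--4. Your approach via $\phi_k$ and the mPME comparison principle is in the spirit of the viscosity-solution literature (e.g.\ \cite{kim2019singular}) and could be made to work, but the inner-layer construction with nonzero $\varrho^E$, variable $m$, and drift is exactly as delicate as you acknowledge, and it is simply unnecessary here. The efficient observation you are missing is that once the density comparison principle for \eqref{Hele-Shaw} is in hand, the velocity law is a one-step corollary obtained by checking that classical barriers are weak sub/super-solutions of the density equation.
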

 
 Then we focus on the case of congestion \eqref{congestion}.
This constraint is a generalization of the one used in \cite{kim2019singular} used for $m \equiv 1$. And in this case, we can identify the limiting pressure up to $\p \Sigma(t)$ when we have control of the initial limiting pressure:
\begin{theorem} [Explicit Identification of $\varrho_{\infty}$ in The Congestion Case] Assume that Assumptions $\ref{Assum_1}$, $\ref{Assum_2}$, and $\ref{hard_constraint_restriciton}$, are met such that $p_k(x,0) \rightarrow p_{\infty}(x,0)$ uniformly such that $\Sigma(0)$ is open and we are in the congestion case $(\ref{congestion})$. Then we have the pointwise formula
\[ \varrho_{\infty}(x,t) = \begin{cases}  \varrho^E(x,t) \text{ for a.e. } (x,t) \in \text{Int}(\{p(x,t)=0\}) \\ m(x,t) \text{ for a.e. } (x,t) \in \{p(x,t)>0\}  \end{cases}, \] where the external density $\varrho^E(x,t)$ solves the continuity equation
\[ \begin{cases} \p_t \varrho^E = \nabla \cdot( \varrho^E \vec{b}) + f \varrho^E \text{ in } Q_T \\ \varrho^E(x,0) = \varrho_{\infty}(x,0)  \end{cases}. \label{identification} \] \end{theorem}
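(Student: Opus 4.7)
The statement splits naturally into two cases, $\{p_\infty>0\}$ and $\text{Int}(\{p_\infty=0\})$. On $\{p_\infty>0\}$ the identification is essentially automatic: the complementarity constraint $p_\infty(m-\varrho_\infty)=0$ that is part of the weak formulation of \eqref{Hele-Shaw} forces $\varrho_\infty=m$ a.e., so no additional argument is needed. All of the substance is in the second case.

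For $(x,t)\in\text{Int}(\{p_\infty=0\})$ the plan is to exploit the geometric monotonicity of $\Sigma(t)$ along streamlines---a consequence of the refined Aronson-B\'enilan estimate \eqref{new_time_derivative} in the congestion regime---to reduce the density equation to a linear continuity equation along characteristics, then to identify the resulting Lagrangian ODE with the one defining $\varrho^E$. Concretely, given such $(x,t)$, let $x_0$ denote the unique point with $X(t,x_0)=x$. First, the entire backward trajectory $\{(X(s,x_0),s):0\le s\le t\}$ must lie in $\{p_\infty=0\}$, for otherwise monotonicity of $\Sigma$ along the streamline propagates $p_\infty(X(s_0,x_0),s_0)>0$ forward to $p_\infty(x,t)>0$, a contradiction. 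Second, on the open set $\text{Int}(\{p_\infty=0\})$ one has $\nabla p_\infty=0$ almost everywhere, so the distributional density equation in \eqref{Hele-Shaw} reduces there to the pure continuity equation $\partial_t\varrho_\infty=\nabla\cdot(\varrho_\infty\vec{b})+f\varrho_\infty$. Since $\vec{b}$ is smooth with bounded derivatives by Assumption~\ref{Assum_1}, the flow $X$ is a smooth diffeomorphism and this linear equation has a unique bounded solution given by push-forward along characteristics; both $\varrho_\infty$ restricted to $\text{Int}(\{p_\infty=0\})$ and the globally defined $\varrho^E$ satisfy it, and at $s=0$ they share the value $\varrho_\infty^{(0)}(x_0)$, so they coincide along the streamline and in particular at $(x,t)$.

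The main obstacle is justifying the Lagrangian step rigorously: the backward streamline is only known to lie in $\{p_\infty=0\}$ and may graze the free boundary $\partial\Sigma(s)$ at intermediate times, where the reduction to the pure continuity equation does not hold in a full spatial neighborhood. I intend to handle this by a duality/tubular argument---testing the weak form of the density equation against functions $\varphi$ obtained by pulling back a smooth spatial bump through the smooth backward flow of $-\vec{b}$ and supported inside $\text{Int}(\{p_\infty=0\})$, after excising the Lebesgue-negligible set of grazing times (which in the congestion regime follows from the strict monotonicity of $\Sigma$ along streamlines, forcing $\partial\Sigma$ to be spacetime-null outside a set of measure zero). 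The pressure contribution $\iint m\nabla p_\infty\cdot\nabla\varphi$ then vanishes identically, and what remains is precisely the integrated Lagrangian ODE for $\varrho_\infty\circ X$ that matches the one defining $\varrho^E$; Gr\"onwall closes the argument. An equivalent, slightly cleaner alternative is to invoke DiPerna-Lions renormalization for the smooth field $(-\vec{b},1)$, which is more than enough given the regularity of $\vec{b}$, to immediately conclude uniqueness of $L^\infty$ solutions of the continuity equation along the flow and thus $\varrho_\infty=\varrho^E$ a.e.\ on $\text{Int}(\{p_\infty=0\})$.
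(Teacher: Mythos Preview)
Your overall strategy matches the paper's exactly: dispose of $\{p_\infty>0\}$ via the complementarity relation, and on $\text{Int}(\{p_\infty=0\})$ use the streamline monotonicity of $\Sigma(t)$ (Theorem~\ref{inc_streamline}) to trace back to $t=0$ and identify $\varrho_\infty$ with $\varrho^E$ through the continuity equation.

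Where you diverge from the paper is in the perceived ``grazing'' obstacle, and the machinery you propose for it (excising a null set of grazing times, invoking DiPerna--Lions) is unnecessary. The point you are missing is that the interior hypothesis propagates backward not as a single curve but as an open tube. Since $(x_0,t_0)\in\text{Int}(\{p_\infty=0\})$, there is a spatial ball $B_r(x_0)$ on which $p_\infty(\cdot,t_0)=0$. Applying the streamline monotonicity to \emph{every} $y\in B_r(x_0)$ (not just $x_0$) shows that the whole set
\[
\mathcal T \;=\; \bigl\{\,(X(s,X(-t_0,y)),s)\;:\;y\in B_r(x_0),\ 0< s\le t_0\,\bigr\}
\]
lies in $\{p_\infty=0\}$; because the flow of the smooth field $-\vec b$ is a diffeomorphism, $\mathcal T$ is open, and hence the backward streamline through $(x_0,t_0)$ already sits in $\text{Int}(\{p_\infty=0\})$ for all $0<s\le t_0$. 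On $\mathcal T$ one has $\nabla p_\infty=0$ a.e., so the weak density equation reduces to the linear continuity equation with initial trace $\varrho_\infty^{(0)}$ on the open set $X(-t_0,B_r(x_0))$, and classical uniqueness for smooth $\vec b$ gives $\varrho_\infty=\varrho^E$ there. This is precisely the paper's one-line argument; no grazing times ever appear, and your claim that strict monotonicity forces $\partial\Sigma$ to be space-time null is neither needed nor established.
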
 
 
 \begin{remark} Note that this identification Theorem $\ref{identification}$ is consistent with the one in \cite{kim2019singular}, which dealt with the case of $m \equiv 1$ using viscosity solutions.
 \end{remark}
 
 We were able to obtain this identification by refining the new Aronson-B\'enilan estimate for the PME with drift found in \cite{kim2021porous}. We showed that under the congestion case  for all $k>1$ there exists a $C>0$ independent of $k$ such that
 \[ \frac{1}{m} \nabla \cdot(m \nabla p_k) \geq -F - \frac{C}{k-1} - \frac{1}{(k-1)t}. \] Now our Aronson-B\'enilan estimate implies that we have the following uniform estimate of the pressure $p_k$ along the streamlines $X(t,x_0)$ for all $k>1$ for some $C>0$ independent of $k$
 \[ \frac{d}{dt} p_k(X(t,x_0),t) \geq -(C+\frac{1}{t})p_k(X(t,x_0),t).   \]  In particular, we deduce that $\Sigma(t)$ is non-decreasing with respect to streamlines for positive time. That is if $x \in \Sigma(t)$ for $t>0$ then the streamline containing $x$ is in $\Sigma(s)$ for all $T \geq s \geq t$. Then by using the uniform convergence of the initial limiting pressure and $\Sigma(0)$ is open, we can use the barrier constructed in Lemma \ref{Big_Pi_Barrier} for the congestion case to conclude that $\Sigma(t)$ is non-decreasing with respect to the streamline for all $0 \leq t \leq T$. This is enough for us to conclude the identification of $\varrho_{\infty}$.
  
  \subsection{Open Problems} Let us denote $(\varrho_{\infty},p_{\infty})$ as the limiting density and pressure respectively.  
 \begin{itemize}[label=$\circ$] 
 
 \item $\mathit{Obstacle}$ $\mathit{Problem}$ $\mathit{Formulation}$ $\mathit{of}$  $\mathit{the}$ $\mathit{Pressure}$ In \cite{guillen2022hele} the authors derived an obstacle problem formulation of $p_{\infty}$ when $\vec{b} \equiv 0$ and $m \equiv 1$. However, when there is a non-zero drift $\vec{b}$, the proof used in \cite{guillen2022hele} does not carry over. \\
 
 \item $\mathit{When}$ $\mathit{m}$ $\mathit{is}$ $\mathit{allowed}$ $\mathit{to}$ $\mathit{go}$ $\mathit{to}$ $\mathit{zero}$ When $m$ is allowed to reach zero, the limiting PDE $(\ref{Hele-Shaw})$ seems to be modified into a new obstacle problem that includes the zero level set of $m$.

 \end{itemize}
\subsection{Acknowledgements}
 The author was partially supported by NSF grant DMS-1900804 and the Summer Mentored Research Fellowship at the University of California, Los Angeles during this project.  The author is also very grateful to his advisor Inwon Kim for suggesting this project,  for her guidance and mentorship throughout this project, and for reading the manuscript.  The author is also very grateful to the two anonymous referees for their very detailed  comments.

 \section{Existence of the Hele-Shaw Limit}

\subsection{\texorpdfstring{$L^{\infty}(Q_T)$}{L} Bounds and Finite Speed of Propagation}
We will derive enough estimates to be able to pass the limit in $(\ref{PME_Density})$ and $\eqref{PME_pressure}$ to $(\ref{Hele-Shaw})$ as $k \rightarrow \infty$ in the weak sense. Our first estimate will be used to obtain $L^{\infty}(Q_T)$ and control the support of the pressure $p_k$. \\

First notice as $p_k(x,0) \geq 0$ and $0$ is a solution to the pressure equation, we may apply the comparison principle to deduce that $p_k(x,t) \geq 0$ almost everywhere. Now we will show the following bounds:

\begin{lemma}[$L^{\infty}(Q_T)$ Bounds and Finite Speed of Propagation] $\label{Pressure_Bounds}$ If Assumptions $\ref{Assum_1}$, $\ref{Assum_2}$, and $\ref{hard_constraint_restriciton}$ are met then there is a constant $C=C(T)$ and a compact set $\Om \subset \mathbb{R}^d$ independent of $k>1$ such that
\[ 0 \leq p_k \leq C  \text{ a.e. in } Q_T, \quad \text{supp}(p_k(t)) \subset \Om \text{ for a.e. } 0 \leq t \leq T. \]
\end{lemma}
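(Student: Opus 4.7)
The lower bound $p_k \geq 0$ is immediate from the comparison principle: $p \equiv 0$ solves \eqref{PME_pressure} and $p_k(x,0) \geq 0$. The substance of the lemma is to produce a smooth function $\Phi(x,t)$, independent of $k$, that (i) is a super-solution of \eqref{PME_pressure} for every $k>1$, (ii) dominates $p_k(x,0)$ uniformly in $k$, and (iii) is uniformly bounded with space-support contained in a fixed compact $\Omega$ for all $t\in[0,T]$. The comparison principle for weak solutions of the mPME (developed in Section \ref{weak_mPME}) then yields $0\leq p_k\leq \Phi$, which proves both conclusions of the lemma at once.

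My candidate is a time-modulated truncated paraboloid
\[
\Phi(x,t) \;=\; \alpha(t)\,\bigl[R^2 - |x-x_0|^2\bigr]_+,
\]
with $x_0$ and $R$ chosen so that $K\subset B_R(x_0)$. On its positivity set, writing $\lambda := \log m$, one has $\nabla\Phi = -2\alpha(x-x_0)$ and
\[
\frac{1}{m}\nabla\cdot(m\nabla\Phi) \;=\; -2\alpha d - 2\alpha(x-x_0)\cdot\nabla\lambda.
\]
Substituting into \eqref{PME_pressure} and collecting terms, the super-solution inequality reduces on $\{\Phi>0\}$ to one of the schematic form
\[
\alpha'(t)\bigl(R^2-|x-x_0|^2\bigr) \,\geq\, 4\alpha^2|x-x_0|^2 - 2\alpha(x-x_0)\cdot\vec b \,+\, (k-1)\alpha\bigl(R^2-|x-x_0|^2\bigr)\bigl[F - 2\alpha\bigl(d+(x-x_0)\cdot\nabla\lambda\bigr)\bigr].
\]
The only potential source of $k$-dependent blow-up is the bracketed factor, and I would neutralize it by taking $\alpha$ so large that $2\alpha\bigl(d+(x-x_0)\cdot\nabla\lambda\bigr)\geq \|F\|_{L^\infty(Q_T)}$ pointwise on $B_R(x_0)$, forcing the $(k-1)$-term to be nonpositive. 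The remaining $k$-independent lower-order terms are absorbed by letting $\alpha(t)=\alpha_0 e^{\beta t}$ with $\beta$ depending only on $R$, $\|\vec b\|_{L^\infty}$, and $\sup_{[0,T]}\alpha$; a further increase of $\alpha_0$ arranges $\Phi(\cdot,0)\geq p_k(\cdot,0)$ on $K$ using the uniform bound of Assumption \ref{Assum_2}. The usual care at the boundary $\{\Phi=0\}$ is handled by the standard PME trick of verifying the comparison on $\{\Phi>0\}$ and using continuity of both $p_k$ and $\Phi$.

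The delicate point is ensuring $d+(x-x_0)\cdot\nabla\lambda \geq c_0>0$ on $B_R(x_0)$, and this is precisely the role of Assumption \ref{hard_constraint_restriciton}. If $m$ is radial, taking $x_0=0$ converts $(x-x_0)\cdot\nabla\lambda$ into $r\,\partial_r\log m$, whose analysis can be carried out in radial coordinates; if $d=1$ the condition collapses to the scalar $1+x\lambda_x$; and in the third case the decay $|\nabla m|\leq \e/|x|$ for $|x|\geq R$ yields $|(x-x_0)\cdot\nabla\lambda|\leq \e/\delta$ outside a large ball, which is strictly less than $d$ for $\e$ sufficiently small, so a large paraboloid centered at the origin works. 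The main obstacle, and the reason Assumption \ref{hard_constraint_restriciton} cannot be dispensed with by this barrier argument, is that without such geometric control on $x\cdot\nabla\log m$ the sign of the bracketed $(k-1)$-coefficient cannot be controlled, and no single paraboloid can serve as a $k$-uniform super-solution.
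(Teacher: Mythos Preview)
Your overall strategy---produce a single $k$-uniform super-solution of the pressure equation and invoke comparison---is exactly the paper's, but the specific barrier $\Phi(x,t)=\alpha(t)[R^2-|x-x_0|^2]_+$ does not work, and the claim that Assumption~\ref{hard_constraint_restriciton} furnishes the inequality $d+(x-x_0)\cdot\nabla\lambda\geq c_0>0$ on $B_R(x_0)$ is the main gap. Radiality of $m$ (and likewise $d=1$) gives no sign information on $r\,\partial_r\log m$: take $m(r)=2+\sin r$, which is smooth, bounded, and bounded below by $1$, yet at $r=(2n+1)\pi$ one has $r\,\partial_r\log m=-(2n+1)\pi/2$, so $d+r\,\partial_r\log m<0$ for $n$ large in any dimension. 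In the decay case the bound $|x\cdot\nabla\lambda|\leq\e/\delta$ only holds for $|x|\geq R_0$, and inside $B_{R_0}$ you have no control; a large paraboloid must still cover that inner region. Consequently the $(k-1)$-term cannot be made nonpositive by taking $\alpha$ large, and the barrier fails as $k\to\infty$. The paper circumvents this by replacing $|x|^2$ with a function $\varphi$ satisfying $\tfrac{1}{m}\nabla\cdot(m\nabla\varphi)=1$ (in the radial and $d=1$ cases, the explicit primitive $\varphi(r,t)=\tfrac{1}{d}\int_0^r r'/m(r',t)\,dr'$; in the decay case, an elliptic solve on $B_R$ glued to a quadratic outside). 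Then $\tfrac{1}{m}\nabla\cdot(m\nabla Z)+F=-\alpha+F\leq 0$ holds identically once $\alpha\geq\|F\|_\infty$, with no geometric hypothesis on $\nabla\lambda$ needed.

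There is a second, independent problem: your support $\{\Phi>0\}=B_R(x_0)$ is frozen in time, so the free boundary has zero normal velocity, while from the interior $|\nabla\Phi|\to 2\alpha R>0$. The smooth test function $\phi(x,t)=\alpha(t)(R^2-|x|^2)-\e(|x-x_0|^2+(t-t_0)^2)$ touches $\Phi$ from below at any $(x_0,t_0)$ with $|x_0|=R$, has $\partial_t\phi=0$ and $|\nabla\phi|=2\alpha R$ there, and violates $\partial_t\phi\geq|\nabla\phi|^2+\nabla\phi\cdot\vec b$ whenever $2\alpha R>\|\vec b\|_\infty$. So $\Phi$ is not a viscosity super-solution across its free boundary, and the ``standard PME trick'' you allude to does not rescue this; the paper's barrier instead has the form $\alpha|R(t)-\varphi(x,t)|_+$ with $R(t)$ growing exponentially, precisely so that the outward normal velocity of $\{Z>0\}$ dominates $|\nabla Z|+\|\vec b\|_\infty$ (see the free-boundary check in Lemma~\ref{super_solution_barrier_radial}).
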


\begin{proof}
To prove this lemma, we use the viscosity super solution constructed in Lemma $\ref{super_solution_barrier_radial}$ and $\ref{super_solution_case_2}$. We choose the initial conditions of this barrier so that the barrier at time $0$ is larger than $\sup_{k>1} ||p_k^{(0)}||_{L^{\infty}(\R^d)}$ inside $K$. So by the comparison principle, $p_k$ is dominated by the barrier. Then as the barrier is compactly supported and bounded for all times, we deduce the same is true for $p_k$.
\end{proof}

This immediately implies compact support and $L^{\infty}(Q_T)$ bounds on the density and normalized density.


\begin{corollary} The density $\varrho_k$ and normalized density $v_k$ are also uniformly bounded in $L^{\infty}(Q_T)$, uniformly compactly supported in $Q_T$, and non-negative almost everywhere. \label{density_bounds}
\end{corollary}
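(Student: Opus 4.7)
The plan is to derive all three properties directly from Lemma \ref{Pressure_Bounds} together with the pointwise constitutive relation $p_k = \tfrac{k}{k-1} v_k^{k-1}$ and the comparison principle for \eqref{PME_Density}. The key observation is that the map $v \mapsto \tfrac{k}{k-1} v^{k-1}$ is a monotone bijection on $[0,\infty)$, so pointwise bounds and support properties of $p_k$ translate verbatim to $v_k = \varrho_k/m$ (and hence to $\varrho_k$, since $m$ is bounded above and below by Assumption \ref{Assum_1}).

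First, I would address non-negativity. Since $\varrho_k^{(0)} \geq 0$ by Assumption \ref{Assum_2} and $\varrho_k \equiv 0$ is a weak solution of \eqref{PME_Density}, the comparison principle from Section \ref{weak_mPME} yields $\varrho_k \geq 0$ a.e.\ in $Q_T$, and $v_k = \varrho_k/m \geq 0$ follows from $m \geq \delta > 0$. Next, for compact support, the constitutive relation is pointwise, so $\{p_k > 0\} = \{v_k > 0\} = \{\varrho_k > 0\}$ up to null sets; the uniform compact support of $p_k(\cdot,t)$ in $\Omega$ from Lemma \ref{Pressure_Bounds} therefore transfers directly to $\varrho_k$ and $v_k$.

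Finally, for the $L^\infty$ bound, the quickest route is to invert the constitutive relation: from $p_k \leq C$ one obtains $v_k \leq \bigl(\tfrac{k-1}{k} C\bigr)^{1/(k-1)} \leq \max(1,C)$ for $k \geq 2$, and then $\varrho_k = m v_k \leq \|m\|_{L^\infty} \max(1,C)$. A slightly more robust alternative, uniform for all $k > 1$, is to compare $v_k$ against the spatially constant super-solution $V(t) := V_0 \, e^{\|F^{+}\|_{L^\infty} t}$ of equation \eqref{normalized-density} with $V_0 := \sup_{k>1} \|\varrho_k^{(0)}/m\|_{L^\infty} \leq C/\delta$; since the nonlinear diffusion term vanishes for constants and $F$ is bounded by Assumption \ref{Assum_1}, $V$ is a super-solution, and the comparison principle yields $v_k(x,t) \leq (C/\delta) e^{\|F^{+}\|_{L^\infty} T}$.

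There is no genuine obstacle here: the only mild subtlety is that the constitutive-relation argument degenerates as $k \to 1^+$, which is irrelevant for the Hele-Shaw limit and in any case is bypassed by the comparison argument.
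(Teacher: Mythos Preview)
Your proposal is correct and follows essentially the same route as the paper: the corollary is stated there as an immediate consequence of Lemma \ref{Pressure_Bounds} via the pointwise constitutive relation $p_k=\tfrac{k}{k-1}v_k^{k-1}$ and the bounds on $m$, with non-negativity coming from comparison with the zero solution (which the paper already invoked just before Lemma \ref{Pressure_Bounds}). Your additional comparison argument with the constant super-solution $V(t)$ is a nice robustness check but is not needed for the result.
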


\subsection{BV Estimates on the Pressure and Density} Now we will derive $L^2(Q_T)$ estimates on $|\nabla p_k|$, which follows from just integrating \eqref{PME_pressure}. Then we obtain $L^1(\R^d)$ estimates for $|\nabla v_k(t)|$ for a.e. $t \in [0,T]$ via an integral Gr\"onwall's inequality with Kato's inequality argument.

\begin{lemma} [$L^2(Q_T)$ bounds for $|\nabla p_k|$] \label{L2_Pressure_Gradient} If Assumptions $\ref{Assum_1}$, $\ref{Assum_2}$, and $\ref{hard_constraint_restriciton}$ are met then for all $k \geq 6$  there is a constant $C=C(T)$ such that
\[ \iint_{Q_T} |\nabla p_k|^2   \leq C(T). \]
\end{lemma}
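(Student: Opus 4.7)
The plan is to integrate the pressure equation \eqref{PME_pressure} over $Q_T$ against the constant test function $1$ and then solve for $\iint_{Q_T}|\nabla p_k|^2$. The left-hand side becomes $\int_{\R^d}[p_k(x,T)-p_k^{(0)}(x)]\dd x$, which is uniformly bounded in $k$ by the uniform $L^\infty$ bound and uniform compact support of $p_k$ provided by Lemma \ref{Pressure_Bounds} together with Assumption \ref{Assum_2}. On the right-hand side the term producing the factor $(k-1)$ is handled by integration by parts on the elliptic operator written in divergence form: using that $p_k$ is compactly supported in space uniformly in $k$ so that no boundary terms appear,
\[
\iint_{Q_T}\frac{p_k}{m}\,\nabla\!\cdot\!(m\nabla p_k)=-\iint_{Q_T}|\nabla p_k|^2+\iint_{Q_T}\frac{p_k}{m}\,\nabla m\cdot\nabla p_k.
\]
Collecting all $|\nabla p_k|^2$ terms on the left, this rearrangement gives an identity of the schematic form
\[
(k-2)\iint_{Q_T}|\nabla p_k|^2=-\!\int_{\R^d}p_k(T)+\!\int_{\R^d}p_k^{(0)}+\iint_{Q_T}\nabla p_k\cdot\vec b+(k-1)\!\iint_{Q_T}\frac{p_k}{m}\nabla m\cdot\nabla p_k+(k-1)\!\iint_{Q_T}p_k F.
\]

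Next I would divide by $k-2$ and handle each term. The boundary-time terms and the $(k-1)/(k-2)\cdot\iint p_k F$ contribution are uniformly bounded using Lemma \ref{Pressure_Bounds}, Assumption \ref{Assum_1} (so $F$ is bounded on the support of $p_k$), and the fact that $(k-1)/(k-2)$ is uniformly bounded for $k\geq 6$. The drift cross term is absorbed via $\e$-Young: $\frac{1}{k-2}\iint|\nabla p_k||\vec b|\leq \frac{1}{4(k-2)}\iint|\nabla p_k|^2+C(T)\|\vec b\|_\infty^2|\mathrm{supp}(p_k)|$. The delicate term is the one carrying the extra factor $k-1$, namely $\frac{k-1}{k-2}\iint \frac{p_k}{m}\nabla m\cdot\nabla p_k$; here I apply a tuned Young inequality
\[
\left|\tfrac{p_k}{m}\nabla m\cdot\nabla p_k\right|\leq \tfrac{k-2}{4(k-1)}|\nabla p_k|^2+\tfrac{k-1}{k-2}\cdot\tfrac{p_k^2|\nabla m|^2}{m^2},
\]
so that after multiplying by $\frac{k-1}{k-2}$ the $|\nabla p_k|^2$ contribution is exactly $\tfrac14\iint|\nabla p_k|^2$, and the remaining term is bounded uniformly in $k$ by Assumption \ref{Assum_1} (smoothness and $m\geq\delta$) together with the uniform $L^\infty$ and compact support bounds on $p_k$.

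Combining, one obtains $\bigl(1-\tfrac{1}{4(k-2)}-\tfrac14\bigr)\iint_{Q_T}|\nabla p_k|^2\leq C(T)$. The condition $k\geq 6$ ensures the absorption constant $1-\tfrac{1}{4(k-2)}-\tfrac14$ is bounded below by a positive number independent of $k$, finishing the proof. The main technical hurdle is the Young inequality tuning: naively the term $(k-1)\iint \frac{p_k}{m}\nabla m\cdot\nabla p_k$ scales the same way as $(k-2)\iint|\nabla p_k|^2$ and would obstruct any $k$-independent bound, and it is only because the residual coefficient is $\frac{(k-1)^2}{(k-2)^2}\to 1$ (rather than growing like $k$) that the cross term can be absorbed at the price of a bounded quantity involving $|\nabla m|^2/m^2$, which is where the strict positivity $m\geq\delta$ from Assumption \ref{Assum_1} is essential. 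A minor technical point is that these manipulations are formally carried out as if $p_k$ were smooth; as noted right after Assumption \ref{Assum_2}, this is justified by the smooth approximation scheme referenced in Section \ref{weak_mPME}.
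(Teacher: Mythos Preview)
Your argument is correct and follows essentially the same route as the paper: integrate the pressure equation over $Q_T$, integrate by parts on the elliptic term (your identity $\iint\frac{p_k}{m}\nabla\!\cdot\!(m\nabla p_k)=-\iint|\nabla p_k|^2+\iint\frac{p_k}{m}\nabla m\cdot\nabla p_k$ is exactly the paper's use of $\frac{1}{m}\nabla\!\cdot\!(m\nabla p_k)=\Delta p_k+\nabla\lambda\cdot\nabla p_k$ with $\lambda=\log m$), and absorb the cross terms via Young's inequality using the uniform $L^\infty$ and support bounds from Lemma~\ref{Pressure_Bounds}. The only cosmetic difference is that the paper applies Young's inequality before dividing by the leading coefficient and arrives at $(k-2)-\frac{k+1}{2}\geq\frac12$ for $k\geq 6$, whereas you divide by $k-2$ first and tune the $\varepsilon$'s to get $1-\tfrac{1}{4(k-2)}-\tfrac14>0$; both bookkeepings are equivalent.
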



\begin{proof} First we use $\frac{1}{m} \nabla \cdot(m \nabla p_k) = \Delta p_k + \nabla \lambda \cdot \nabla p_k$ and integrate the pressure equation over $Q_T$ to see that
\[ (k-2) \iint_{Q_T}  |\nabla p_k|^2 = \int_{\R^d} p_k(x,0)-p_k(x,T) + \iint_{Q_T} \nabla p_k \cdot (\vec{b} + (k-1) p_k \nabla \lambda) + (k-1)p_k F . \] Now we recall that Young's Inequality with $\e>0$ says that $|\vec{v}_1 \cdot \vec{v}_2| \leq \e |\vec{v}_1|^2  + |\vec{v}_2|^2/(4\e)$. Using Young's Inequality with $\e$  combined with our uniform sup bounds and uniform support control on $p_k$ (Lemma $\ref{Pressure_Bounds}$) gives us that there is $C=C(T)$ such that
\[ (k-2) \iint_{Q_T} |\nabla p_k|^2 \leq kC(T) + \frac{k+1}{2}\iint_{Q_T} |\nabla p_k|^2.  \] Now because $k \geq 6$, we have that $(k-2)-\frac{k+1}{2} \geq  1/2$ and $\sup_{k \geq 6}$ $((k-2)-\frac{k+1}{2})/k \leq \tilde{C}$, which implies the desired bound.

\end{proof}

Note that this $L^2(Q_T)$ estimate combined with H\"older's inequality and our uniform compact support of $p_k$ (see Lemma \ref{Pressure_Bounds}) gives us $L^1(Q_T)$ control of $\nabla p_k$. Thus to obtain $L^1(Q_T)$ convergence of the pressure along a sub-sequence as $k \rightarrow \infty$, it remains to show time derivative bounds on the pressure. Uniform time derivative bounds on $p_k$ is one of the most delicate parts of our analysis. This is unsurprising because in \cite{perthame2014hele} the authors constructed examples of $p_{\infty}$ that have strong jump discontinuities in time. \\

Our first step in obtaining time derivative control of $p_k$ is to obtain BV bounds for the normalized density $v_k$. To derive our BV bounds, we use the following  lemma that shows $|\nabla v_k^k| \approx |\nabla p_k|$.

\begin{lemma} If Assumptions $\ref{Assum_1}$, $\ref{Assum_2}$, and $\ref{hard_constraint_restriciton}$ are met with $k \geq 6$ then there is a constant $C=C(T)$ such that for the normalized density $v_k$ we have that
\[ \iint_{Q_T} |\nabla v_k^k|^2 \leq C(T). \] \label{vk_L2}
\end{lemma}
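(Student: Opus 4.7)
The plan is to reduce this estimate directly to Lemma \ref{L2_Pressure_Gradient} by exploiting the algebraic identity relating $\nabla v_k^k$ and $\nabla p_k$ that already appeared in the introduction, namely $\varrho_k\,\nabla p_k = m\,\nabla v_k^k$. Dividing this identity by $m$ (which is bounded below by $\delta$ via Assumption \ref{Assum_1}) gives the cleaner pointwise relation
\begin{equation*}
\nabla v_k^k = v_k\,\nabla p_k.
\end{equation*}
One can verify this directly from the definitions $p_k = \frac{k}{k-1} v_k^{k-1}$ and $\nabla v_k^k = k v_k^{k-1}\nabla v_k$, since $\nabla p_k = k v_k^{k-2}\nabla v_k$ and so $v_k \nabla p_k = k v_k^{k-1}\nabla v_k = \nabla v_k^k$.

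From this identity, the estimate follows by a one-line application of the uniform $L^\infty$ bound on the normalized density. Specifically, squaring both sides gives $|\nabla v_k^k|^2 = v_k^2\,|\nabla p_k|^2$, and by Corollary \ref{density_bounds} there exists a constant $C$ independent of $k$ such that $\|v_k\|_{L^\infty(Q_T)} \leq C$. Integrating over $Q_T$ yields
\begin{equation*}
\iint_{Q_T} |\nabla v_k^k|^2 \,\leq\, \|v_k\|_{L^\infty(Q_T)}^2 \iint_{Q_T} |\nabla p_k|^2 \,\leq\, C^2 \iint_{Q_T} |\nabla p_k|^2,
\end{equation*}
and Lemma \ref{L2_Pressure_Gradient} then bounds the right-hand side by $C(T)$ for $k \geq 6$, giving the claimed estimate.

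There is no genuine obstacle here: the lemma is an immediate corollary of the previous $L^2$ pressure gradient estimate, and its role in the paper is presumably to hand a convenient quantity (the flux-like term $\nabla v_k^k$ appearing in the weak formulation \eqref{weak_sub_soln}) with a uniform $L^2(Q_T)$ bound, so that one can pass to the limit $k \to \infty$ in the density equation against smooth test functions. The only point deserving a brief check is justifying the chain-rule identity $\nabla v_k^k = v_k \nabla p_k$ at the level of weak solutions; this is fine under the smooth approximation scheme referenced in Section \ref{weak_mPME}, since one carries out the computation on smooth approximants and passes to the limit, as is standard in the PME literature.
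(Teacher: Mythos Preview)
Your proof is correct and takes essentially the same approach as the paper: both use the pointwise relation $\nabla v_k^k = v_k\,\nabla p_k$ (the paper writes it as $|\nabla v_k^k| \leq C\,k v_k^{k-2}|\nabla v_k| = C|\nabla p_k|$ via the $L^\infty$ bound on $v_k$), invoke Corollary~\ref{density_bounds}, and then appeal to Lemma~\ref{L2_Pressure_Gradient}.
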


\begin{proof} Note that from Corollary $\ref{density_bounds}$ we have that $0 \leq v_k \leq C$ almost everywhere, so 
\[ |\nabla v_k^k| \leq Ckv_k^{k-2} |\nabla v_k| = C|\nabla p_k|.  \] Hence, by integrating this inequality and using Lemma $\ref{L2_Pressure_Gradient}$, we conclude the desired inequality.
\end{proof} 

Now we have enough estimates to obtain $L^1(\R^d)$ bounds on $|\nabla v_k(t)|$ for a.e. $t \in [0,T]$.

\begin{lemma} [BV bounds for the Normalized Density] Assume that Assumptions $\ref{Assum_1}$, $\ref{Assum_2}$, and $\ref{hard_constraint_restriciton}$ are met with $k \geq 6$. Then there is a constant $C=C(T)$ such that for almost every $t \in [0,T]$
\[ \int_{\R^d} |\nabla v_k(t)|  \leq C(T). \] \label{density_bv}
\end{lemma}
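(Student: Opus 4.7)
My plan is to differentiate equation $\eqref{normalized-density}$ in each spatial direction $x_i$, multiply by a smooth approximation of $\text{sgn}(\p_i v_k)$, integrate in space, and close the resulting differential inequality via integral Gr\"onwall. Writing the equation as
$\p_t v_k = \Delta v_k^k + \nabla \lambda \cdot \nabla v_k^k + \vec{b} \cdot \nabla v_k + F v_k$
and setting $u_i := \p_i v_k$, formal differentiation in $x_i$ produces an equation for $u_i$ whose principal diffusion is $\Delta(k v_k^{k-1} u_i)$. As is standard in the porous medium literature, I would carry out the formal computation on the smooth approximations introduced in Section \ref{weak_mPME} and pass to the limit.

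The central observation is that $u_i$ and $\p_i v_k^k = k v_k^{k-1} u_i$ have the same sign pointwise, since $k v_k^{k-1} \geq 0$. Kato's inequality therefore yields $\text{sgn}(u_i) \Delta \p_i v_k^k \leq \Delta |\p_i v_k^k|$ distributionally, and integration over $\R^d$ (with a cutoff, using the uniform compact support of $v_k$ from Lemma \ref{Pressure_Bounds} and Corollary \ref{density_bounds}) makes this contribution nonpositive. The companion term $\p_i(\nabla \lambda \cdot \nabla v_k^k)$ splits into $\nabla \lambda \cdot \nabla \p_i v_k^k$, which, after integration by parts and another use of Kato, produces $-\int (\Delta \lambda) |\p_i v_k^k|$, and $\p_i(\nabla \lambda) \cdot \nabla v_k^k$; both are pointwise dominated by $C |\nabla v_k^k|$. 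The drift and source terms reduce, after multiplication by $\text{sgn}(u_i)$ and integration by parts, to linear multiples of $\int |\nabla v_k|$ and $\int v_k$, controlled using Assumption \ref{Assum_1}.

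Summing over $i$ I expect a differential inequality of the form
\[ \frac{d}{dt} \int_{\R^d} |\nabla v_k(t)| \leq C \int_{\R^d} |\nabla v_k(t)| + G(t), \qquad G(t) := C \int_{\R^d} |\nabla v_k^k(t)| + C, \]
with $C$ independent of $k$. Cauchy-Schwarz combined with the uniform compact support of $v_k$ and Lemma \ref{vk_L2} bounds $\int_0^T G(t)\,dt \leq C(T)$; Assumption \ref{Assum_2} together with $m \geq \delta$ bounds $\|\nabla v_k^{(0)}\|_{L^1(\R^d)}$. The integral Gr\"onwall inequality then yields the claim for a.e. $t \in [0,T]$.

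The main obstacle is the term $\p_i(\nabla \lambda \cdot \nabla v_k^k)$: the factor $k v_k^{k-1}$ hidden in $\nabla v_k^k$ is not uniformly bounded in $k$, so this contribution cannot be absorbed into $\int |\nabla v_k|$ on the right-hand side of the Gr\"onwall inequality. The resolution I propose is to treat it as an exogenous forcing $G(t)$ and exploit only its $L^1(0,T)$ norm, controlled through the $L^2(Q_T)$ estimate of Lemma \ref{vk_L2} after Cauchy-Schwarz and the uniform compact support. Justifying the Kato manipulation for the possibly non-smooth $v_k$ is the only other subtlety; this is handled through the smooth approximations already available from Section \ref{weak_mPME}.
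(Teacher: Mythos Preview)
Your proposal is correct and follows essentially the same approach as the paper: differentiate \eqref{normalized-density} in $x_i$, use that $\p_i v_k$ and $\p_i v_k^k$ share a sign to apply Kato's inequality, rewrite the $\nabla\lambda$-term so that its dangerous part becomes a divergence (equivalently, integrate by parts) leaving a remainder controlled by $|\nabla v_k^k|$, and close with integral Gr\"onwall using Lemma~\ref{vk_L2} and Assumption~\ref{Assum_2}. The only cosmetic difference is that the paper absorbs the $|\nabla v_k^k|$ contribution directly into the constant via its uniform $L^1(Q_T)$ bound, whereas you carry it as a forcing $G(t)$ with bounded time integral; the two are equivalent.
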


\begin{proof} We differentiate \eqref{normalized-density} in $x_i$ using that $\frac{1}{m} \nabla \cdot(m \nabla v_k^k) = \Delta(v_k^k) + \nabla \lambda \cdot \nabla v_k^k$ to  obtain for $\omega_i := \p_{x_i} v_k$ that
\begin{equation} \p_t \omega_i =  \Delta(kv_k^{k-1} \omega_i) + \nabla (\p_{x_i} \lambda) \cdot \nabla v_k^k + \nabla \lambda \cdot \nabla(kv_k^{k-1} \omega_i) + \nabla(\omega_i) \cdot \vec{b} + \nabla v_k \cdot  (\p_{x_i} \vec{b}) + v_k[\p_{x_i} F] + \omega_i F. \label{omega_i_eqn} \end{equation} Note that our assumptions tells us that $F$ is a smooth bounded function with bounded derivatives. To deal with the term $\nabla \lambda \cdot \nabla(kv_k^{k-1} \omega_i)$, we observe that
\[ \nabla \lambda \cdot \nabla (kv_k^{k-1} \omega_i) = \nabla \cdot( kv_k^{k-1} \omega_i \nabla \lambda) - kv_k^{k-1} \omega_i \Delta \lambda =  \nabla \cdot( kv_k^{k-1} \omega_i \nabla \lambda) - (\p_{x_i} v_k^k) \Delta \lambda. \] Now we multiply \eqref{omega_i_eqn} by $\text{sign}(\omega_i)$ and use Kato's Inequality, Cauchy-Schwarz, and $v_k \leq C$ (see Corollary \ref{density_bounds}) to see that\[ \p_t |\omega_i| \leq \Delta(kv_k^{k-1} |\omega_i|) + C|\nabla v_k^k| + \nabla \cdot(k v_k^{k-1} |\omega_i| \nabla \lambda) + C |\p_{x_i} v_k^k|  + \nabla |\omega_i| \cdot \vec{b} + C| \nabla v_k| + C|\omega_i|  + C. \]
Hence, integrating this using that $\omega_i$ is compactly supported and that $|\nabla v_k^k| \in L^1(Q_t)$ uniformly in $k$ (see Lemma \ref{vk_L2}) gives
\[ \frac{d}{dt} \iint_{Q_t} |\omega_i|  \leq C + \int_{\R^d} |\omega_i|(t=0) +  \iint_{Q_t} -|\omega_i| (\nabla \cdot \vec{b}) + C|\omega_i| + C \left( \sum_{j=1}^{d} |\omega_j|^2 \right)^{1/2}. \] 
Summing this over all $1 \leq i \leq d$ with $\omega := \sum_{j=1}^{d} |\omega_j|$ along with our initial BV bounds (Assumption \ref{Assum_2}) gives
\[ \frac{d}{dt} \iint_{Q_t} \omega  \leq C + C \iint_{Q_t} \omega.  \] Now Gr\"{o}nwall's Inequality implies with our initial BV bounds (Assumption $\ref{Assum_2}$)  the desired bounds.
\end{proof}


\subsection{Generalized Semi-Convexity Estimate} \label{general-semi-convexity} Now we aim to control the time derivatives of the normalized density and pressure. To do this, we need a semi-convexity estimate in a similar vein to the Aronson-Bénilan estimate. The famous Aronson-Bénilan (see \cite{vazquez2007porous}) estimate says that for the case of $m(x,t) \equiv 1, f = 0$, and $\vec{b} = \vec{0}$ one has the following inequality
\[ \Delta p_k \geq -\frac{1}{(k-1)t}. \] We will derive for our case the following similar estimate
\[ w_k = \frac{1}{m} \nabla \cdot(m \nabla p_k) \geq -\frac{2}{(k-1)t} - K_1, \] where $K_1>0$ is a constant independent of $k$. This generalized semi-convexity estimate will be used to show for any $\tau>0$ and a.e. $t \in [\tau,T]$ that $||\p_t v_k(t)||_{L^1(\R^d)} \leq C(\tau,T)$. For this section, we fix a $k \geq 6$ and will drop the subscript $k$ on the $p_k$ and $w_k$. \\

The main idea of the generalized semi-convexity estimate is to derive that $w$ solves a parabolic PDE and modify the barrier introduced in \cite{kim2021porous}. In Section \ref{Characterization}, we modify the barrier used in this section under the congestion assumption to derive a refined Aronson-Bénilan estimate. \\

We begin deriving the PDE that $w$ solves. Observe that
\begin{equation} w_t = \nabla \lambda_t \cdot \nabla p + \nabla \lambda \cdot \nabla p_t + \Delta p_t. \label{basic_w_eqn} \end{equation}  Now we will derive expressions for $\nabla p_t$ and $\Delta p_t$. By differentiating \eqref{PME_pressure} in $x_i$ we see that
\begin{equation} \p_{x_i} p_t = \sum_{j=1}^{d} \left[ 2(\p_{x_j} p)(\p^2_{x_j x_i} p) + (\p^2_{x_i x_j} p) b_j + (\p_{x_j} p)(\p_{x_i} b_j) \right] + (k-1) \left[ (\p_{x_i} p)(w+F) +p(\p_{x_i} w + \p_{x_i} F) \right]. \label{nabla_pt} \end{equation} Differentiating in $x_i$ again and summing gives
\begin{equation} \Delta p_t = \sum_{j=1}^{d} 2(\p^2_{x_i x_j} p)^2 + 2\nabla p \cdot \nabla(\Delta p) + \nabla(\Delta p) \cdot \vec{b} + \nabla p \cdot(\Delta \vec{b}) + 2\sum_{i,j=1}^{d} (\p^2_{x_i x_j} p)(\p_{x_i} b_j) +  \label{delta_pt1}  \end{equation}
\begin{equation} + (k-1) \left\{ \Delta p(w+F) + 2 \nabla p \cdot (\nabla w + \nabla F) + p(\Delta w + \Delta F) \right\}.  \label{delta_pt2}\end{equation} 
\noindent Using \eqref{nabla_pt}, \eqref{delta_pt1}, and \eqref{delta_pt2} in \eqref{basic_w_eqn}, we obtain the following PDE for $w$
    \begin{equation} w_t = \nabla \lambda_t \cdot \nabla p + (k-1) \left[ w(w+F) + (\nabla w + \nabla F) \cdot(p \nabla \lambda + 2 \nabla p) + p(\Delta w + \Delta F) \right] + \label{w1} \end{equation}
    \begin{equation}
        + \sum_{i,j=1}^{d} 2(\p^2_{x_i x_j} p)^2 + 2(\nabla w \cdot \nabla p - (\nabla p)^T D^2 \lambda (\nabla p)) + \nabla(w-\nabla p \cdot \nabla \lambda) \cdot \vec{b} + \nabla p \cdot(\Delta \vec{b})  +  \label{w2}    \end{equation} 
    \begin{equation}
        + \sum_{i,j=1}^{d} (\p^2_{x_i x_j} p)(b_j)(\p_{x_i} \lambda) + \sum_{i,j=1}^{d} (\p_{x_j} p)(\p_{x_i} b_j)(\p_{x_i} \lambda) + 2 \sum_{i,j=1}^{d} (\p^2_{x_i x_j} p)(\p_{x_i} b_j). \label{w3} \end{equation} To derive this equation we used the identity
\[ \sum_{i,j=1}^{d} 2(\p_{x_j} p)(\p^2_{x_i x_j}p)(\p_{x_i} \lambda) + 2\nabla p \cdot \nabla (\Delta p) = 2\{ \nabla w \cdot \nabla p - (\nabla p)^T D^2\lambda (\nabla p)  \}, \]  which follows from expanding the right hand side. \\

Now we will use \eqref{w1},\eqref{w2}, and \eqref{w3} to derive that $w$ is a super solution to a simpler parabolic differential equation.  Indeed, by Cauchy-Schwarz and Young's Inequality with $\e$ we have that
\begin{equation} \nabla \lambda_t \cdot \nabla p \geq -\frac{1}{2 \e} ||\nabla \lambda_t||^2_{\infty} - \frac{\e}{2} |\nabla p|^2.  \end{equation} In addition, we have the following bound
\begin{equation}\nabla(\nabla p \cdot \nabla \lambda) \cdot \vec{b} = \sum_{i,j=1}^{d} (\p^2_{x_i x_j} p)(\p_{x_i} \lambda)b_j + (\p^2_{x_i x_j} \lambda)(\p_{x_i} p) b_j   \end{equation}
\begin{equation} \geq \sum_{i,j=1}^{d} - \frac{\e}{2} |\p^2_{x_i x_j}  p|^2 - \frac{1}{2 \e} \left( ||\p_{x_i} \lambda||^2_{\infty} ||b_j||^2_{\infty} \right) - \frac{\e}{2} |\nabla p|^2 - \frac{1}{2 \e} \sum_{i,j=1}^{d} ||\p^2_{x_i x_j} \lambda||^2_{\infty} ||b_j||^2_{\infty}.  \end{equation} Also we observe that
\begin{equation}
\nabla p \cdot(\Delta \vec{b}) \geq -\frac{\e}{2} |\nabla p|^2- \frac{1}{2 \e} ||\Delta \vec{b}||^2_{\infty}. \end{equation} Then by using Young's Inequality with $\e$, we see that
\begin{equation} \sum_{i,j=1}^{d} (\p_{x_j} p)(\p_{x_i} b_j)(\p_{x_i} \lambda) + 2 \sum_{i,j=1}^{d} (\p^2_{x_i x_j} p)(\p_{x_i} b_j) \geq -C(\e) -\e |\nabla p|^2 - \e \sum_{i,j=1}^{d} |\p^2_{x_i x_j}p|^2. \end{equation}  Now by using that $p \geq -C$ (see Lemma \ref{Pressure_Bounds})
\begin{equation}  w(w+F) \geq \frac{w^2}{2} - C, \quad \nabla F \cdot (p \nabla \lambda + 2\nabla p) \geq -C -  \frac{|\nabla p|^2}{2}, \text{ and } p \Delta F \geq -C. \end{equation} Combining these inequalities along with making $\e$ sufficiently small (independent of $k$) gives
\[ \p_t w \geq -|\nabla p|^2 + \nabla w \cdot \vec{b} + (k-1) \left\{ \frac{w^2}{2} + p\nabla w \cdot \nabla \lambda + 2 \nabla w \cdot \nabla p + p \Delta w - \frac{|\nabla p|^2}{2} - C \right\} +  \sum_{i,j=1}^{d} (\p^2_{x_i x_j} p)^2 + \] \[ + 2(\nabla w \cdot \nabla p - (\nabla p)^T D^2 \lambda (\nabla p)). \] Now we treat $p$ as a known function and define the parabolic differential operator
\begin{equation} L[\phi] := \p_t\phi - \nabla w \cdot \vec{b} + |\nabla p|^2 - 2(\nabla \phi \cdot \nabla p - (\nabla p)^T D^2 \lambda(\nabla p)) + \label{Lw1} \end{equation}
\begin{equation} - (k-1) \left\{ \frac{\phi^2}{2} + \nabla \phi \cdot(p \nabla \lambda + 2 \nabla p) + p \Delta \phi - \frac{|\nabla p|^2}{2} - C \right\},  \label{Lw2} \end{equation} then we have by construction that $L[w] \geq 0$. Now we construct a sub-solution of this operator using the barrier introduced in \cite{kim2021porous}.

\begin{lemma}[Subsolution for semi-convexity estimate] We denote $p=p_k$ and $w=w_k$ for ease of notation. Then there exists a sufficiently large $K_1>0$ such that for any $\tau>0$
\[ \psi(x,t;\tau) := -\frac{2}{(k-1)(t+\tau)} + p - K_1 \] satisfies $L[\psi] \leq 0$ whenever $k>1$ is sufficiently large where $L$ is the parabolic differential operator in \eqref{Lw1} and \eqref{Lw2}. \label{subsolution}
\end{lemma}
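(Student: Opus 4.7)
The plan is a direct substitution into the operator $L$. The ansatz $\psi$ has three nice features that make the computation tractable: $\nabla \psi = \nabla p$, $\Delta \psi = \Delta p$, and $\partial_t \psi = \partial_t p + \frac{2}{(k-1)(t+\tau)^2}$. Using the pressure equation \eqref{PME_pressure} I would replace $\partial_t p = |\nabla p|^2 + \nabla p \cdot \vec{b} + (k-1)p(w+F)$, so that
\[
\partial_t \psi - \nabla \psi \cdot \vec{b} \;=\; |\nabla p|^2 + (k-1)p(w+F) + \frac{2}{(k-1)(t+\tau)^2}.
\]

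The second step is the key algebraic cancellation. Because $w = \Delta p + \nabla \lambda \cdot \nabla p$, the bracketed term in the definition of $L$ satisfies
\[
\nabla \psi \cdot(p\nabla \lambda + 2\nabla p) + p\Delta \psi \;=\; p(\nabla \lambda \cdot \nabla p + \Delta p) + 2|\nabla p|^2 \;=\; pw + 2|\nabla p|^2.
\]
Plugging both computations into \eqref{Lw1}--\eqref{Lw2}, the term $(k-1)pw$ coming from $\partial_t p$ is exactly eliminated by the corresponding $(k-1)pw$ piece inside the braces. The two stray $|\nabla p|^2$'s and the $-2\nabla\psi\cdot\nabla p = -2|\nabla p|^2$ also cancel. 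What remains is
\[
L[\psi] \;=\; \frac{2}{(k-1)(t+\tau)^2} + (k-1)pF + 2(\nabla p)^T D^2\lambda(\nabla p) - (k-1)\,\frac{\psi^2}{2} - (k-1)\,\frac{3|\nabla p|^2}{2} + (k-1)C.
\]

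The third step is the sign analysis. The term $2(\nabla p)^T D^2 \lambda (\nabla p)$ is bounded by $2\|D^2\lambda\|_\infty |\nabla p|^2$ and is absorbed into $-\tfrac{3(k-1)}{2}|\nabla p|^2$ as soon as $k$ is large enough (depending only on $\|D^2\lambda\|_\infty$). The bounded terms $(k-1)pF$ and $(k-1)C$ contribute at most $(k-1)C_1$, where $C_1$ depends only on $\|p\|_\infty$ (finite by Lemma~\ref{Pressure_Bounds}) and $\|F\|_\infty$. Since $\psi \le p - K_1 \le \|p\|_\infty - K_1$, choosing $K_1 \ge 2\|p\|_\infty + 1$ yields $\psi \le -K_1/2$ everywhere, so $\psi^2 \ge K_1^2/4$. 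Then
\[
L[\psi] \;\le\; \frac{2}{(k-1)\tau^2} + (k-1)\!\left(C_1 - \frac{K_1^2}{8}\right),
\]
which becomes nonpositive once $K_1$ is chosen with $K_1^2 > 8(C_1+1)$ and then $k$ is taken large enough that $\frac{2}{(k-1)\tau^2} \le (k-1)$.

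I do not anticipate a serious obstacle here: the proof is a bookkeeping exercise once one notices the cancellation $p(\nabla \lambda \cdot \nabla p + \Delta p) = pw$. The only subtle point is choosing the parameters in the right order, namely first $K_1 = K_1(\|p\|_\infty,\|F\|_\infty,\|D^2\lambda\|_\infty)$ to dominate the bounded positive terms, and then $k$ sufficiently large (depending on $K_1$ and $\tau$) to kill both the $|\nabla p|^2$ remainder and the singular $\frac{2}{(k-1)(t+\tau)^2}$ piece. Note that $K_1$ is independent of $k$ and $\tau$, which is what is needed to later send $\tau \downarrow 0$ in the comparison $w \ge \psi$ and recover $w \ge -\frac{2}{(k-1)t} - K_1$.
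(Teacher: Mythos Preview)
Your algebraic substitution is correct and matches the paper's computation exactly: both arrive at
\[
L[\psi] = \frac{2}{(k-1)(t+\tau)^2} + 2(\nabla p)^T D^2\lambda(\nabla p) + (k-1)\Bigl(pF - \tfrac{\psi^2}{2} - \tfrac{3}{2}|\nabla p|^2 + C\Bigr).
\]
The gap is in your sign analysis of the singular term. You bound $\psi^2 \ge K_1^2/4$ using only the constant part of $\psi$, and then absorb $\frac{2}{(k-1)(t+\tau)^2} \le \frac{2}{(k-1)\tau^2}$ into the negative $(k-1)$ piece. This forces your $k$-threshold to satisfy $(k-1)^2 \ge 2/\tau^2$, so it blows up as $\tau \downarrow 0$. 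But the very next lemma in the paper fixes $k$ and sends $\tau \downarrow 0$ (using $\psi(x,0;\tau)\to -\infty$ to guarantee $\psi \le w$ at $t=0$), so a $\tau$-dependent threshold on $k$ makes the lemma useless for its intended purpose. Your closing sentence notes that $K_1$ is independent of $\tau$, but it is the $k$-threshold, not $K_1$, that causes the trouble.

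The paper fixes this by keeping the full expansion
\[
\psi^2 = \frac{4}{(k-1)^2(t+\tau)^2} - \frac{4(p-K_1)}{(k-1)(t+\tau)} + (p-K_1)^2 \;\ge\; \frac{4}{(k-1)^2(t+\tau)^2} + (p-K_1)^2,
\]
the cross term being nonnegative once $K_1 > \|p\|_\infty$. Multiplying by $-(k-1)/2$, the first piece produces exactly $-\frac{2}{(k-1)(t+\tau)^2}$, cancelling the singular term on the nose for every $k\ge 2$ and every $\tau>0$. What remains is $(k-1)\bigl(C - \tfrac12(p-K_1)^2\bigr)$, which is made nonpositive by choosing $K_1 \ge \|p\|_\infty + \sqrt{2C}$, again independently of $k$ and $\tau$. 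So the fix is a one-line change: do not discard the $\frac{4}{(k-1)^2(t+\tau)^2}$ piece of $\psi^2$.
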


\begin{remark}  This barrier $\psi(x,t;\tau)$ is inspired by the one in \cite{kim2021porous}.
\end{remark}

\begin{proof} We plug in $\psi$ to $L$ to see that
\[ L[\psi] = \frac{2}{(k-1)(t+\tau)^2} + 2(\nabla p)^T D^2 \lambda (\nabla p) + (k-1)(pF - \frac{\psi^2}{2} - \frac{3}{2} |\nabla p|^2 + C  ). \] Now we use that
\[ \psi^2 = \frac{4}{(k-1)^2(t+\tau)^2} - 2(p-K_1)\left(\frac{2}{(k-1)(t+\tau)} \right) + (p-K_1)^2 \]
\[ \geq  \frac{4}{(k-1)^2(t+\tau)^2} + (p-K_1)^2, \] where we chose $K_1 > \sup_{k > 1} ||p_k||_{L^{\infty}(Q_T)}$ (which is finite thanks to Lemma \ref{Pressure_Bounds}), so that $p-K_1 \leq 0$. Now from $k \geq 2$, we have that $(k-1) \frac{-2}{(k-1)^2(t+\tau)^2} \leq \frac{-2}{(k-1)^2(t+\tau)^2}$, which implies
\[ L[\psi] \leq \underbrace{\frac{1}{(k-1)(t+\tau)^2} \left[2-2\right]}_{0} + \underbrace{2(\nabla p)^T D^2 \lambda (\nabla p) + (k-1)\left[-\frac{3}{2} |\nabla p|^2 \right]}_{\mathcal{I}_1} + \] \[ + \underbrace{(k-1)(-\frac{(p-K_1)^2}{2} + C)}_{\mathcal{I}_2}.  \] Note that we used the  $L^{\infty}(Q_T)$ pressure bounds (Lemma $\ref{Pressure_Bounds}$) to have $pF \leq C$ for some $C>0$. Now for all $(x,t) \in Q_T$, let $\lambda_1(x,t) \geq \lambda_2(x,t) \geq ... \geq \lambda_n(x,t)$ be the eigenvalues of $D^2\lambda(x,t)$. Then let $\Lambda := \sup_{(x,t) \in Q_T} \{ \lambda_1(x,t)  \}$ be the biggest eigenvalue of $D^2\lambda$ over $Q_T$. This implies
\[ \mathcal{I}_1 \leq 2 \Lambda |\nabla p|^2 -(k-1) \frac{3}{2} |\nabla p|^2 \leq 0 \text{ for } k \geq \frac{4}{3} \Lambda + 1. \]  Then for $\mathcal{I}_2$, by making $K_1$ larger if necessary, we can also assume that $K_1 \geq \sup_{k > 1} ||p_k||_{L^{\infty}(Q_T)} + \sqrt{2C}$ , which implies $\mathcal{I}_2 \leq 0$. So we conclude that $L[\psi] \leq 0$.
\end{proof}
Now we reintroduce the subscript $k$.
\begin{lemma} [Semi-Convexity Estimate] There exists a $K_1$ independent of $k$ such that we have
\[  w_k := \frac{1}{m} \nabla \cdot(m \nabla p_k) \geq -\frac{2}{(k-1)t} - K_1 \] for sufficiently large $k$ in the distribution sense. \label{semi-convexity}
\end{lemma}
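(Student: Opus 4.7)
The plan is to conclude via a parabolic comparison principle for the operator $L$ of \eqref{Lw1}--\eqref{Lw2}, comparing the super-solution $w_k$ (which satisfies $L[w_k]\geq 0$ by the derivation preceding Lemma \ref{subsolution}) with the sub-solution $\psi(\cdot,\cdot;\tau)$ produced in Lemma \ref{subsolution}, and then letting $\tau\to 0^+$. To make the pointwise manipulations underlying $L[w_k]\geq 0$ rigorous, I would carry out the whole comparison at the smooth-regularization level discussed in Section \ref{weak_mPME}, and then pass the resulting inequality $w_k\geq\psi$ to the limit; this is exactly why the final estimate is only asserted in the distribution sense.

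For the comparison itself, set $\eta:=w_k-\psi$. Because the nonlinearity in $L$ is purely quadratic in $\phi$, the identity $w_k^2-\psi^2=(w_k+\psi)\eta$ turns $L[w_k]-L[\psi]\geq 0$ into a \emph{linear} degenerate parabolic inequality for $\eta$ of the schematic form
\[
\eta_t \;\geq\; (k-1)p_k\,\Delta\eta \;+\; \vec V\cdot\nabla\eta \;+\; c_{k,\tau}\,\eta,
\]
where $\vec V$ is built from $\nabla p_k,\nabla\lambda,\vec b$ and $c_{k,\tau}$ depends on $w_k+\psi$; at each fixed regularization level and fixed $\tau>0$ both coefficients are bounded on $Q_T$. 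The diffusion coefficient $(k-1)p_k$ is non-negative and degenerates only on $\{p_k=0\}$, so the standard weak maximum principle for linear degenerate parabolic inequalities applies, provided $\eta\geq 0$ on the parabolic boundary.

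The parabolic-boundary check is elementary. At $t=0$, $\psi(x,0;\tau)=-\tfrac{2}{(k-1)\tau}+p_k(x,0)-K_1\to -\infty$ as $\tau\to 0^+$, so once $\tau$ is chosen small enough (depending on the regularization) we have $\psi(\cdot,0;\tau)\leq w_k(\cdot,0)$ on $\mathbb{R}^d$. Outside a large ball containing $\mathrm{supp}(p_k(\cdot,t))$ for a.e.\ $t\in[0,T]$ (Lemma \ref{Pressure_Bounds}), $w_k$ and $p_k$ both vanish, while $\psi\leq -K_1<0$ by the choice $K_1\geq \sup_{k>1}\|p_k\|_{L^{\infty}(Q_T)}+\sqrt{2C}$ made in Lemma \ref{subsolution}, so $\eta\geq 0$ there as well. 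The maximum principle then gives $w_k\geq\psi$ on $Q_T$; using $p_k\geq 0$ and sending $\tau\to 0^+$ yields
\[
w_k(x,t)\;\geq\;-\frac{2}{(k-1)t}+p_k(x,t)-K_1\;\geq\;-\frac{2}{(k-1)t}-K_1,
\]
and passing to the limit in the smoothing parameter preserves the estimate distributionally. The main technical hurdle I anticipate is the combination of the degeneracy of the principal part on $\{p_k=0\}$ with the lack of uniform bounds on $c_{k,\tau}$ as $\tau\to 0^+$; this is what forces one to perform the comparison at fixed $\tau>0$ (and at fixed regularization) before sending $\tau\to 0^+$, in the order indicated above.
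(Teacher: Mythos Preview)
Your proposal is correct and follows essentially the same approach as the paper: compare $w_k$ (with $L[w_k]\geq 0$) to the barrier $\psi(\cdot,\cdot;\tau)$ of Lemma~\ref{subsolution} (with $L[\psi]\leq 0$), use that $\psi(x,0;\tau)\to-\infty$ as $\tau\downarrow 0$ to arrange the initial ordering, and then let $\tau\to 0^+$. The paper's proof is terser---it simply asserts that ``we can compare'' and omits the linearization via $w_k^2-\psi^2=(w_k+\psi)\eta$, the boundary check outside the support of $p_k$, and the role of the smooth approximations---so your write-up effectively expands the details the paper leaves implicit.
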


\begin{proof} Using the $\psi(x,t;\tau)$ from Lemma $\ref{subsolution}$, we see that
\[ \lim_{\tau \downarrow 0} \psi(x,0;\tau) = -\infty. \] This implies for small enough $\tau=\tau(k)>0$ we can compare $\psi(\cdot,\cdot,\tau)$ to $w_k$ (because $L[w_k] \geq 0$ and $L[\psi(\cdot,\cdot;\tau)] \leq 0$) to obtain the claim.
\end{proof}

This gives us enough control to derive an $L^1(Q_{\tau,T})$ estimate on $w_k$.

\begin{lemma} [$L^1(Q_{\tau,T})$ bound on the semi-convexity term]  Assume that Assumptions $\ref{Assum_1}$, $\ref{Assum_2}$, and $\ref{hard_constraint_restriciton}$ are met then if $\tau>0$, there is a constant $C=C(\tau,T)$ such that for all sufficiently large $k$
\[ \iint_{Q_{\tau,T}} |w_k|  \leq C(\tau,T). \] \label{L1_semi_convexity}
\end{lemma}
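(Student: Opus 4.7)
The plan is to split $w_k$ into positive and negative parts, control the negative part pointwise via the semi-convexity estimate of Lemma \ref{semi-convexity}, and control the integral of $w_k$ itself by integrating by parts, using the uniform compact support of $p_k$ from Lemma \ref{Pressure_Bounds}.

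First, I would use the identity $|w_k| = w_k + 2(w_k)_-$, where $(w_k)_-$ denotes the negative part. By Lemma \ref{semi-convexity}, for all sufficiently large $k$,
\[
(w_k)_-(x,t) \leq \frac{2}{(k-1)t} + K_1 \qquad \text{for a.e. } (x,t) \in Q_T.
\]
Next, I would observe that, because $p_k$ is uniformly compactly supported in some set $\Omega$ (Lemma \ref{Pressure_Bounds}) and (after passing through the smooth approximations mentioned in Section \ref{weak_mPME}) $w_k = \frac{1}{m}\nabla \cdot(m \nabla p_k)$ vanishes off $\Omega$, we obtain
\[
\iint_{Q_{\tau,T}} (w_k)_- \leq |\Omega|\int_\tau^T \Bigl(\frac{2}{(k-1)t} + K_1\Bigr)\,dt \leq C(\tau,T).
\]

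For the remaining term $\iint_{Q_{\tau,T}} w_k$, I would use the decomposition $w_k = \Delta p_k + \nabla \lambda \cdot \nabla p_k$ from Section \ref{general-semi-convexity}. Since $p_k(\cdot,t)$ is compactly supported, the divergence theorem gives $\int_{\R^d} \Delta p_k(x,t)\,dx = 0$ for a.e.\ $t$. For the second term, Assumption \ref{Assum_1} provides $\|\nabla \lambda\|_\infty \leq C$, and Cauchy--Schwarz together with the compact support of $p_k$ and Lemma \ref{L2_Pressure_Gradient} yields
\[
\iint_{Q_{\tau,T}} |\nabla p_k| \leq |\Omega|^{1/2}(T-\tau)^{1/2}\Bigl(\iint_{Q_T}|\nabla p_k|^2\Bigr)^{1/2} \leq C(T).
\]
Combining these bounds gives $\bigl|\iint_{Q_{\tau,T}} w_k\bigr| \leq C(T)$, and then $|w_k| = w_k + 2(w_k)_-$ produces the desired estimate.

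The potential obstacle is mostly bookkeeping: one must justify the distributional identities (integration by parts and the pointwise bound on $(w_k)_-$) at the level of the approximating smooth solutions and then pass to the limit, which is routine given the framework in Section \ref{weak_mPME}. Otherwise, the argument is a direct consequence of Lemmas \ref{Pressure_Bounds}, \ref{L2_Pressure_Gradient}, and \ref{semi-convexity}.
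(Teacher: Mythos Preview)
Your proposal is correct and matches the paper's approach essentially line for line: both use the decomposition $|w_k| = w_k + 2(w_k)_-$, bound $(w_k)_-$ pointwise via Lemma~\ref{semi-convexity}, and then control $\iint w_k$ by integrating with compact support. You are simply more explicit than the paper about the last step, writing out $w_k = \Delta p_k + \nabla\lambda\cdot\nabla p_k$ and invoking Lemma~\ref{L2_Pressure_Gradient}, whereas the paper just says ``conclude by integrating.''
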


\begin{proof}
Recall that $|w_k| = w_k + 2|w_k|_{-}$ where $|x|_{-} := -\min(x,0)$ for $x \in \R$. Then our semi-convexity estimate Lemma \ref{semi-convexity} implies that $|w_k|_{-} \leq  \frac{2}{(k-1) \tau} + K_1$ in $Q_{\tau,T}$. Now using that $w_k$ is compactly supported thanks to Lemma \ref{Pressure_Bounds}, we conclude the claim by integrating.
\end{proof}

\subsection{Time Derivative Estimates} Now equipped with our semi-convexity estimate we can derive $L^1(\R^d)$ estimates on $|\p_t v_k(t)|$ for a.e. $t \in [\tau,T]$ and $L^1(Q_{\tau,T})$ estimates on $|\p_t p_k|$. Note that we cannot expect to go down to $t=0$ without stronger conditions on our initial data. \\

To begin we first observe the semi-convexity estimate Lemma $\ref{semi-convexity}$ that
\[ \p_t p_k \geq \vec{b} \cdot \nabla p_k + (k-1)p_k \left[ -\frac{2}{(k-1)t} - K_1\right].   \] Then by using $p_k = \frac{k}{k-1} (v_k^{k-1})$ we have that
\[ \p_t v_k \geq \vec{b} \cdot \nabla v_k - v_k \left[K_1 + \frac{2}{(k-1)t} \right]. \]  Cauchy-Schwarz implies that
\[ \p_t v_k \geq -C|\nabla v_k| - v_k \left[K_1 + \frac{2}{(k-1)t} \right]. \] This estimate implies the following bound on the negative part of $\p_t v_k$,
\[  |\p_t v_k|_{-} \leq C|\nabla v_k| + |v_k| \left[ K_1 + \frac{2}{(k-1) t} \right]. \] Thus our BV bounds on $v_k$ (Lemma \ref{density_bv}) and $L^{\infty}(Q_T)$ bounds on $v_k$ (Corollary $\ref{density_bounds}$), implies for a.e. $t \in [\tau,T]$ that
\[ \int_{\R^d}  |\p_t v_k|_{-}(t) dx \leq C(\tau,T). \] Now we integrate in space \eqref{normalized-density} to see that
\[ \left| \int_{\R^d} \p_t v_k \right|   \leq C.  \] So using that $|\p_t v_k| = \p_t v_k + 2 |\p_t v_k|_{-}$, we see for a.e. $t \in [\tau,T]$ that
\[ \int_{\R^d} |\p_t v_k|(t)  \leq C(\tau,T). \] That is we have the following lemma:

\begin{lemma} [Time Derivative Control of The Normalized Density]  Assume that Assumptions $\ref{Assum_1}$, $\ref{Assum_2}$, and $\ref{hard_constraint_restriciton}$ are met, then for any $\tau>0$ we have the bound for almost every $t \in [\tau,T]$
\[ \int_{\R^d} |\p_t v_k|(t) \leq C(\tau,T)  \] for sufficiently large $k$. $\label{density_time}$

\end{lemma}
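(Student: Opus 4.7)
The plan is to control $|\partial_t v_k|$ through the standard decomposition $|\partial_t v_k| = \partial_t v_k + 2|\partial_t v_k|_-$ and estimate each piece separately in $L^1(\R^d)$. The positive integrated piece is the easy part: integrating the normalized density equation \eqref{normalized-density} in space and using that $v_k(t)$ has uniform compact support (Corollary \ref{density_bounds}) makes the divergence term vanish and leaves $\int_{\R^d} \partial_t v_k = \int_{\R^d} \nabla v_k \cdot \vec{b} + Fv_k$, which is bounded uniformly in $k$ and in $t \in [0,T]$ by the BV bound of Lemma \ref{density_bv} (or by integration by parts and the $L^\infty$ bound) together with the compact-support control. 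The real content is the $L^1$ bound on the negative part $|\partial_t v_k|_-$.

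For that piece I would feed the semi-convexity estimate of Lemma \ref{semi-convexity}, $w_k \geq -\frac{2}{(k-1)t} - K_1$, into the pressure equation \eqref{PME_pressure}. Dropping the non-negative term $|\nabla p_k|^2$ and absorbing the bounded contributions coming from $F$ and the $L^\infty$ bound on $p_k$ from Lemma \ref{Pressure_Bounds} yields a pointwise lower bound of the form $\partial_t p_k \geq \vec{b}\cdot\nabla p_k - (k-1)p_k K_1 - \frac{2 p_k}{t}$. Converting back to the normalized density through $p_k = \frac{k}{k-1} v_k^{k-1}$, so that $\partial_t p_k = k v_k^{k-2}\partial_t v_k$ and $\nabla p_k = k v_k^{k-2} \nabla v_k$, the common factor $k v_k^{k-2}$ cancels and produces the clean inequality $\partial_t v_k \geq \vec{b}\cdot\nabla v_k - v_k\bigl[K_1 + \frac{2}{(k-1)t}\bigr]$. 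A Cauchy--Schwarz estimate on the drift term gives $|\partial_t v_k|_- \leq C|\nabla v_k| + v_k\bigl[K_1 + \frac{2}{(k-1)t}\bigr]$, and integrating this in space at each fixed $t \in [\tau, T]$ delivers the desired $L^1(\R^d)$ bound with a constant depending only on $\tau$ and $T$, using Lemma \ref{density_bv} for $\nabla v_k$ and Corollary \ref{density_bounds} for $v_k$ combined with uniform compact support.

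Combining the two halves via $|\partial_t v_k| = \partial_t v_k + 2|\partial_t v_k|_-$ closes the proof. The heavy lifting has already been done upstream in the generalized semi-convexity estimate Lemma \ref{semi-convexity} and in the BV and $L^\infty$ estimates of the preceding subsections, so no genuinely new obstacle appears here; the argument is a clean synthesis. The restriction to $t \geq \tau > 0$ is forced by the $\frac{1}{(k-1)t}$ singularity inherited from the semi-convexity estimate at $t=0$, which is consistent with the well-known possibility of a jump discontinuity of the pressure at the initial time in the absence of stronger initial-data assumptions.
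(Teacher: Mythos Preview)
Your proposal is correct and follows essentially the same route as the paper: both use the decomposition $|\partial_t v_k| = \partial_t v_k + 2|\partial_t v_k|_-$, bound the integrated signed part by integrating \eqref{normalized-density} in space, and control the negative part by feeding the semi-convexity estimate of Lemma \ref{semi-convexity} into \eqref{PME_pressure}, converting to $v_k$ via $p_k=\frac{k}{k-1}v_k^{k-1}$, and invoking Lemma \ref{density_bv} and Corollary \ref{density_bounds}. The only cosmetic difference is that the paper absorbs the $F$ contribution directly into the constant $K_1$ rather than naming it separately.
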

Now we will obtain $L^1(Q_{\tau,T})$ estimates of $\p_t p_k$. But we first need the following lemma based on an estimate in \cite{guillen2022hele}.

\begin{lemma}   Assume that Assumptions $\ref{Assum_1}$, $\ref{Assum_2}$, and $\ref{hard_constraint_restriciton}$ are met and let $\tau>0$. Then if we write $q:=\p_t v$ (we drop the subscript $k$ in this lemma). Then we have for sufficiently large $k$ and almost every $0<\tau<T$ that
\[ \iint_{Q_{\tau,T}} kv^{k-1} |q| \leq C(\tau,T). \] $\label{pressure_time_lemmma}$
\end{lemma}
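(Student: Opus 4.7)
The first observation is the chain-rule identity
\[ kv^{k-1} q \;=\; v\,\partial_t p \;=\; \partial_t\bigl(v^k\bigr), \]
obtained by differentiating the constitutive law $p = \frac{k}{k-1} v^{k-1}$ in time. Thus the claim is equivalent to the time-BV estimate
\[ \iint_{Q_{\tau,T}} |\partial_t v^k|\,dx\,dt \;\leq\; C(\tau,T). \]
My plan is to decompose $|\partial_t v^k| = \partial_t v^k + 2 (\partial_t v^k)_-$ and bound each piece separately.

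The signed part is immediate, since
\[ \iint_{Q_{\tau,T}} \partial_t v^k\,dx\,dt \;=\; \int_{\R^d} \bigl[v^k(\cdot,T) - v^k(\cdot,\tau)\bigr]\,dx, \]
which is uniformly bounded in $k$ by the $L^\infty(Q_T)$ bound and uniform compact support of $v^k$ (Lemma \ref{Pressure_Bounds} and Corollary \ref{density_bounds}). For the negative part, the starting point is the pressure equation \eqref{PME_pressure} combined with the semi-convexity estimate of Lemma \ref{semi-convexity}, namely $w \geq -\frac{2}{(k-1)t} - K_1$. Using the identity $(k-1) vp = k v^k$, one obtains the pointwise lower bound
\[ v\,\partial_t p \;\geq\; -Cv|\nabla p| - \frac{C vp}{t} - C k v^k. \]
The first term integrates to a $k$-independent bound by Lemma \ref{L2_Pressure_Gradient} combined with Cauchy--Schwarz on the uniform support of $v$; the second by $vp \leq C$ and $\int_\tau^T t^{-1} dt = \log(T/\tau)$.

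The main obstacle is the third term $\iint k v^k$, which cannot be bounded pointwise in $k$: the factor $v^k$ concentrates on the saturated region $\{v \approx 1\}$ and is of order one there. To close the argument I would adapt the integration-by-parts strategy from \cite{guillen2022hele}, based on the key identity
\[ \iint k v^k\,w \;=\; -k \iint v|\nabla p|^2 + k\iint v^k \nabla \lambda \cdot \nabla p, \]
(from $\nabla v^k = v\nabla p$ and IBP against $w = \frac{1}{m}\nabla \cdot(m \nabla p)$), combined with the representation
\[ \iint v\,\partial_t p \;=\; \int\bigl[vp(T) - vp(\tau)\bigr]\,dx - \iint p q, \]
whose right-hand side is uniformly bounded in $k$ by the $L^\infty$ bound on $p$ and the time-derivative BV estimate of Lemma \ref{density_time}. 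Careful algebraic manipulation of these two identities allows the $kv^k$ contribution to be traded against the $L^2(Q_T)$-controlled terms in $\nabla p$, yielding the sought uniform-in-$k$ bound on $\iint (\partial_t v^k)_-$. The hardest technical step is tracking the $k$-dependent prefactors cleanly throughout these manipulations, which is where the approach of \cite{guillen2022hele} must be invoked and adapted to accommodate the drift $\vec{b}$ and source $F$.
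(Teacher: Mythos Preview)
Your decomposition $|\partial_t v^k| = \partial_t v^k + 2(\partial_t v^k)_-$ and the bound on the signed part are correct, but the treatment of the negative part has a genuine gap. As you yourself note, the semi-convexity lower bound produces the term $C k v^k$, and $\iint_{Q_{\tau,T}} k v^k$ is \emph{not} uniformly bounded in $k$: on the saturated set $\{v\approx 1\}$, which has positive measure in general, this integral is of order $k$. Your proposed fix via the two identities does not close. The first, $\iint k v^k w = -k\iint v|\nabla p|^2 + k\iint v^k\nabla\lambda\cdot\nabla p$, controls $\iint k v^k w$, not $\iint k v^k$; combining it with the pressure equation only yields $\iint k v^k F = (k-1)\iint v|\nabla p|^2 + O(k)$, which is still $O(k)$. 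The second identity merely re-expresses the signed part. No ``algebraic manipulation'' of these two relations can absorb an $O(k)$ term into $O(1)$ quantities.

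The paper's proof avoids this obstacle entirely by working at the level of the equation for $q=\partial_t v$ rather than through $\partial_t v^k$. Differentiating \eqref{normalized-density} in $t$ and applying Kato's inequality gives
\[ \partial_t|q| \;\leq\; \tfrac{1}{m}\nabla\cdot\bigl(m\nabla(kv^{k-1}|q|)\bigr) + \text{(controlled lower-order terms)}. \]
The key idea (adapted from \cite{guillen2022hele}) is to test this against $\phi=m\varphi$, where $\varphi$ solves $\nabla\cdot(m\nabla\varphi)=-1$ on a large ball containing the support. Two integrations by parts turn the principal term into $\iint (kv^{k-1}|q|)\,\nabla\cdot(m\nabla(\phi/m)) = -\iint kv^{k-1}|q|$, which moves to the left side with the correct sign. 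The remaining terms are bounded by Lemmas \ref{vk_L2}, \ref{density_bv}, and crucially \ref{density_time} for $\int\phi|q|(\tau)$ and $\int\phi|q|(T)$. This dual-test-function mechanism is what extracts the quantity $kv^{k-1}|q|$ directly, bypassing any need to control $\iint k v^k$.
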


\begin{proof}
 Observe by differentiating the normalized density equation $(\ref{normalized-density})$ in time that we have
\[ \p_t q = \Delta(kv^{k-1} q) + \nabla \lambda_t \cdot \nabla v^k + \nabla \lambda \cdot \nabla (kv^{k-1} q) + \nabla q \cdot \vec{b} + \nabla v \cdot (\p_t \vec{b}) + q F + v (\p_t F).  \] Multiplying by $\text{sign}(q)$ and using Kato's inequality implies that
\[ \p_t |q| \leq \Delta(kv^{k-1}|q|) +|\nabla \lambda_t  \cdot \nabla v^k| + \nabla \lambda \cdot \nabla(kv^{k-1}|q|) + \nabla \cdot(|q| \vec{b}) - |q| \nabla \cdot \vec{b} \]
\[ + |\nabla v \cdot (\p_t \vec{b})| + C|q| + C|v|. \] Now we observe that $\Delta(kv^{k-1}|q|) + \nabla \lambda \cdot \nabla (kv^{k-1}|q|) =  \nabla \cdot(m \nabla (kv^{k-1}|q|))/m$. Hence, we have from Young's Inequality and Cauchy-Schwarz for any smooth $\phi = \phi(x) \geq 0$ that is compactly supported
\[ \phi (\p_t |q|) \leq \frac{\phi}{m} \nabla \cdot(m \nabla (kv^{k-1} |q|)) + \phi( \frac{|\nabla v^k|^2}{2} + C) + \phi\left[ \nabla \cdot(|q| \vec{b}) - |q| \nabla \cdot \vec{b} \right] + C \phi |\nabla v| + C \phi |q| + C \phi |v|. \] So by integration by parts and using our $L^2(Q_T)$ bounds on $|\nabla v^k|$ (Lemma \ref{vk_L2}), BV bounds on $|\nabla v|$ (Lemma \ref{density_bv}),  $L^{\infty}(Q_T)$ bounds on $v$ (Corollary \ref{density_bounds}), and $L^1(Q_{\tau,T})$ bounds on $q$ (Lemma \ref{density_time}) we have that
\[ \iint_{Q_{\tau,T}} \phi(\p_t |q|) \leq \iint_{Q_{\tau,T}} (kv^{k-1}|q|) [ \nabla \cdot(m \nabla(\phi/m))  ]   + C(\phi,\tau,T).  \] We now let $R>0$ be so large such that 
\[ \text{supp}(v_k) \subset B_R(0) \text{ for all } k>1 \text{ for } t \in [0,T]. \]  Then we define $\phi = m\varphi$ where $\varphi$ is the unique solution of the following uniformly elliptic PDE
\[
\begin{cases}
\nabla \cdot(m \nabla \varphi) = -1 \text{ in } B_{2R}(0) \\
\phi = 0 \text{ on } \p B_{2R}(0)
\end{cases}
\] and defined to be 0 outside of this ball.   
Hence, we deduce that
\[ \iint_{Q_{\tau,T}} (kv^{k-1}|q|) \leq \int_{\R^d} \phi(x) \left[ |q|(x,\tau)-|q|(x,T) \right] dx + C(\tau,T) \leq C(\tau,T),   \]
where we used Lemma $\ref{density_time}$ in the final inequality.
\end{proof}

Now this gives enough estimates to prove our $L^1(Q_{\tau,T})$ bounds on $\p_t p$. Indeed we proceed as in \cite{perthame2014hele} and use that  $|\p_t p| = kv^{k-2} |q|$ to see
\[ \iint_{Q_{\tau,T}} |\p_t p|  \leq \iint_{ \{ v < 1/2 \} \cap Q_{\tau,T} } kv^{k-2}|q|  + 2\iint_{ \{v \geq 1/2\} \cap Q_{\tau,T} } kv^{k-1}|q|  \] \[ \leq \iint_{Q_{\tau,T}} k\frac{1}{2^{k-2}} |q| + 2kv^{k-1}|q|  \leq C(\tau,T), \] where in the final bound we used Lemma $\ref{density_time}$ and $\ref{pressure_time_lemmma}$. That is we deduced
\begin{lemma} Assume that Assumptions $\ref{Assum_1}$, $\ref{Assum_2}$, and $\ref{hard_constraint_restriciton}$ are met, then for almost every $\tau>0$ we have the bound for sufficiently large $k>1$
\[ \iint_{Q_{\tau,T}} |\p_t p_k| \leq C(\tau,T). \]

\end{lemma}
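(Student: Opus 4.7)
The plan is to reduce the bound on $\partial_t p_k$ to the two pieces of information already available: the $L^1$ bound on $\partial_t v_k(t)$ (Lemma \ref{density_time}) and the weighted bound $\iint k v_k^{k-1} |\partial_t v_k| \leq C(\tau,T)$ (Lemma \ref{pressure_time_lemmma}). The starting point is the constitutive identity $p_k = \frac{k}{k-1} v_k^{k-1}$, which after differentiation in time yields pointwise
\[
|\partial_t p_k| = k\, v_k^{k-2}\, |\partial_t v_k| = k\, v_k^{k-2}\, |q|,
\]
where $q := \partial_t v_k$. So the task is to control the integral of $k v_k^{k-2} |q|$ over $Q_{\tau,T}$.

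The key idea is a simple split of the domain of integration based on the size of $v_k$, using the threshold $1/2$. On the set $\{v_k \geq 1/2\}$ we exploit the fact that $v_k^{k-2} \leq 2\, v_k^{k-1}$, so the contribution is at most $2 \iint k v_k^{k-1} |q|$, which is controlled by $C(\tau,T)$ directly from Lemma \ref{pressure_time_lemmma}. On the complementary set $\{v_k < 1/2\}$, the decay of the power is favorable: $k v_k^{k-2} \leq k / 2^{k-2}$, a deterministic constant that is in fact bounded (and tends to $0$) as $k \to \infty$, so the contribution is at most $\frac{k}{2^{k-2}} \iint_{Q_{\tau,T}} |q|$, which is bounded by a constant times the integral from Lemma \ref{density_time} (integrating the $t$-slice $L^1$ bound in $t$ over $[\tau,T]$).

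Adding the two contributions then gives
\[
\iint_{Q_{\tau,T}} |\partial_t p_k| \leq \frac{k}{2^{k-2}} \iint_{Q_{\tau,T}} |q| + 2 \iint_{Q_{\tau,T}} k v_k^{k-1} |q| \leq C(\tau,T),
\]
for $k$ sufficiently large, which is the claimed bound. There is no real obstacle here beyond correctly invoking the two prior lemmas; the essential difficulty has been pushed into Lemma \ref{pressure_time_lemmma}, whose proof required the dual test function $\phi = m\varphi$ solving a uniformly elliptic Poisson problem on a large ball. The splitting trick is precisely the one used in the constant-$m$ setting of \cite{perthame2014hele}, and it transplants to the present case without modification since both required ingredients have been established with constants depending only on $\tau$ and $T$ (and not on $k$).
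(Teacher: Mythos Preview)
Your proof is correct and follows essentially the same approach as the paper: the same pointwise identity $|\partial_t p_k| = k v_k^{k-2}|q|$, the same split at the threshold $v_k = 1/2$, and the same invocation of Lemmas \ref{density_time} and \ref{pressure_time_lemmma} on the two pieces. The paper likewise attributes the splitting trick to \cite{perthame2014hele}.
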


\subsection{Passage To The Limit For The Density}  

 Our bounds imply $v_k$ and $p_k$ is a bounded family in $W^{1,1}(Q_{\tau,T})$ for almost every $0<\tau<T$ (so the set of such $\tau$ is dense). Hence, along a subsequence (which we still denote by $k$), we can find a $v_{\infty},p_{\infty} \in L^1(Q_{\tau,T})$ such that
\[ v_k \rightarrow v_{\infty} \text{ in } L^1(Q_{\tau,T}) \text{ and } p_k \rightarrow p_{\infty} \text{ in } L^1(Q_{\tau,T}). \] Then by using a standard diagonal argument combined with the $L^{\infty}(Q_T)$ and support control of the pressure and density Lemma $\ref{Pressure_Bounds}$, we see that we can extract a sub-sequence (which we denote by $k)$ such that $(v_k,p_k)$ has a limit in $L^1(Q_T) \times L^1(Q_T)$ . We denote the limit (along this sub-sequence) as $v_{\infty}$ and $p_{\infty}$. Now if we denote by $\varrho_{\infty} := mv_{\infty}$, then it implies $\varrho_k \rightarrow \varrho_{\infty}$ in $L^1(Q_T)$ since $m \geq \delta > 0$. Next observe from  $v_k p_k = C_k p_k^{\frac{k}{k-1}}$ where $C_k \rightarrow 1$ as $k \rightarrow \infty$ that from the pointwise convergence a.e. (we can use a further sub-sequence if needed) that
 \[ p_{\infty}(v_{\infty}-1) = 0 \text{ a.e. } \Rightarrow p_{\infty}(m-u_{\infty})=0 \text{ a.e. }. \] Finally by considering the weak form of the density equation in $(\ref{PME_Density})$ and letting $k \rightarrow \infty$ with our estimates, shows that for any test function $\varphi$ we have
 \[ \begin{cases} \iint_{Q_T} \varphi_t \varrho_{\infty} - \nabla \varphi \cdot(m \nabla p_{\infty} + \varrho_{\infty} \vec{b}) + \varphi f \varrho_{\infty}  + \int_{\R^d} \varphi(t=0)\varrho_{\infty}^0  = 0 \\ p_{\infty}(m-\varrho_{\infty}) = 0 \text{ a.e. in } Q_T \end{cases}. \] That is $(\varrho_{\infty},p_{\infty})$ is a weak solution to the density equation in $(\ref{Hele-Shaw})$ with initial data $\varrho_{\infty}^0$. This gives us the existence portion of Theorem $\ref{Compactness-Theorem}$.

 \section{Uniqueness and Comparison Principle of the Generalized Hele-Shaw Flow } \label{uniqueness_section}
 
 In this section, we consider weak solutions  $(\varrho,p)$ to  the density equation $(\ref{Hele-Shaw})$ such that $\varrho \in L^{\infty}([0,T];L^1(\R^d) \cap L^{\infty}(\R^d))$ and $p \in L^2([0,T];H^1(\R^d) \cap L^{\infty}(\R^d))$ with compact support for any fixed time and we will show uniqueness among this class of functions with the same initial density. We proceed via the Hilbert's duality method used in \cite{perthame2014hele, guillen2022hele, david2021incompressible}. Let $(\varrho_1,p_1)$ and $(\varrho_2,p_2)$ be two weak solutions of $(\ref{Hele-Shaw})$ with the same initial data, then we see that for any $\varphi \in C^{\infty}_c(\R^d \times [0,T))$ we have
 \begin{equation} \iint_{Q_T} ((\varrho_1 - \varrho_2) + (p_1 - p_2)) \left[ A (\p_t \varphi) + B \nabla \cdot(m \nabla \varphi) - A(\nabla \varphi \cdot \vec{b} - \varphi f) \right] = 0, \label{Dual-Problem} \end{equation} where
 \begin{equation}
    0 \leq  A := \frac{\varrho_1-\varrho_2}{(\varrho_1 - \varrho_2) + (p_1 - p_2)} \leq 1 \text{ and }   0 \leq  B := \frac{p_1-p_2}{(\varrho_1 - \varrho_2)  + (p_1 - p_2)} \leq 1. \label{AB}
 \end{equation} Note that these bounds on $A$ and $B$ follow from $p_i(m-\varrho_i)=0$. for $i=1,2$ (where $A:=0$ when $\varrho_1=\varrho_2$ and $B:=0$ when $p_1=p_2$). Now let $R>0$ be so large such that $\varrho_i(t)$ and $p_i(t)$ are supported in $B_R(0) \subset \R^d$ for $t \in [0,T]$. This leads us to be interested in solving the dual problem; that is if $\psi \in C_c^{\infty}(Q_T)$ we want to find a $\varphi$ such that
 \begin{equation}   \begin{cases} A (\p_t \varphi) + B \nabla \cdot(m \nabla \varphi) - A(\nabla \varphi \cdot \vec{b} - \varphi f) = A\psi \text{ in } Q_T \\
 \varphi(t=T) = 0 \text{ and } \varphi(x,t) = 0 \text{ on } \p B_R(0) \times (0,T) \label{dual_problem_1}
 \end{cases} ,
 \end{equation} where if necessary, we take $R$ larger to ensure the support of $\psi$ is contained in $B_R(0) \times [0,T]$. We also define $\varphi = 0$ for $|x|>R$. 
 
 
 Observe that $(\ref{dual_problem_1})$ is a backwards degenerate parabolic operator due to the possible degeneracy of $A=0$ and $B=0$ at some points. So to solve $(\ref{dual_problem_1})$ we use an approximation scheme. Indeed, consider compactly supported smooth approximations $A_n$ and $B_n$ such that
 \[ \begin{cases} ||A_n - A||_{L^2(Q_T)} \leq \frac{1}{n} \text{ and } \frac{1}{n} \leq A_n \leq 1  \\ 
 ||B_n - B||_{L^2(Q_T)} \leq \frac{1}{n} \text{ and } \frac{1}{n} \leq B_n \leq 1
 \end{cases}. \]   Now we note that the approximate problem for $\psi \in C^{\infty}_c(Q_T)$
 \begin{equation}
     \begin{cases}
      \p_t \varphi_n + \frac{B_n}{A_n} \left[ \nabla \cdot(m \nabla \varphi_n) \right] - (\nabla \varphi_n \cdot \vec{b} - \varphi_n f) = \psi \text{ in } Q_T  \\
      \varphi_n(t=T)=0 \text{ and } \varphi_n(x,t) = 0 \text{ on } \p B_R(0) \times (0,T)
     \end{cases}. \label{reg_dual}
 \end{equation} is a uniform backwards parabolic equation with a terminal time condition, so the equation is classically solvable. In addition, since the coefficients are smooth, the solution is smooth with a smooth solution (see \cite{vazquez2007porous}). Then using $\varphi_n$ as a test function in $(\ref{Dual-Problem})$ gives us that
 \[ \iint_{Q_T} (\varrho_1 - \varrho_2)  \psi  = \underbrace{\iint_{Q_T} \left[ (\varrho_1 - \varrho_2) + (p_1-p_2) \right] (A \frac{B_n}{A_n} - B_n + B_n  - B) [\nabla \cdot(m \nabla \varphi_n)] }_{\mathcal{I}_n}. \]
 For notational simplicity, we denote $Z:=\left[ (\varrho_1 - \varrho_2) - (p_1-p_2) \right]$ and we observe that
 \begin{equation} \mathcal{I}_n = \underbrace{\iint_{Q_T} Z \frac{B_n}{A_n}  (A - A_n) [\nabla \cdot(m \nabla \varphi_n)] }_{\mathcal{I}_{n,1}} + \underbrace{\iint_{Q_T} Z  (B_n - B) [\nabla \cdot(m \nabla \varphi_n)] }_{\mathcal{I}_{n,2}}. \label{In_Eqn} \end{equation}
 Now we collect similar estimates on the dual problem to the ones in \cite{perthame2014hele}:
 
 \begin{lemma} There are constant $C=C(T,\psi)>0$ independent of $n$ such that for all $t \in [0,T]$
 \[ ||\varphi_n(t)||_{L^{\infty}(\R^d)} \leq C, \quad ||\nabla \varphi_n(t)||_{L^2(\R^d)} \leq C, \quad ||(B_n/A_n)^{1/2} \nabla \cdot(m \nabla \varphi_n)||_{L^2(Q_T)} \leq C.  \] \label{dual_phi_ineq}
 \end{lemma}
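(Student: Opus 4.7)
The plan is to obtain the three bounds in sequence: first the $L^\infty$ estimate by a reversed-time maximum principle, and then the pair of energy estimates simultaneously from a single identity produced by multiplying \eqref{reg_dual} by $\Phi_n := \nabla\cdot(m\nabla\varphi_n)$. For the $L^\infty$ bound I would set $\tilde\varphi_n(x,t) := \varphi_n(x, T-t)$, which converts \eqref{reg_dual} into a forward uniformly parabolic Dirichlet problem on $B_R(0) \times (0,T)$ with zero initial and boundary data, bounded lower-order coefficients, and source $\psi \in C^\infty_c$. Because the principal elliptic operator has the correct sign regardless of the size of $B_n/A_n$, the standard weak maximum principle applied to $e^{-\|f\|_\infty t}\tilde\varphi_n \pm t\|\psi\|_\infty$ yields $\|\varphi_n(t)\|_{L^\infty(\R^d)} \leq C(T,\|\psi\|_\infty,\|f\|_\infty)$ independent of $n$.

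For the remaining two estimates I would multiply \eqref{reg_dual} by $\Phi_n$, integrate over $\R^d\times(s,T)$, and use spatial integration by parts on the $(\p_t\varphi_n)\Phi_n$ term together with $\varphi_n(T)=0$ to obtain
\begin{equation*}
\frac{1}{2}\int_{\R^d} m(s)|\nabla\varphi_n(s)|^2\,dx + \iint_{\R^d\times(s,T)} \frac{B_n}{A_n}\,\Phi_n^2 = \iint \bigl[(\nabla\varphi_n\cdot\vec{b}) - f\varphi_n + \psi\bigr]\Phi_n - \frac{1}{2}\iint (\p_t m)|\nabla\varphi_n|^2.
\end{equation*}
The main obstacle is that the three right-hand side terms containing $\Phi_n$ cannot be absorbed into $\iint(B_n/A_n)\Phi_n^2$ by Cauchy-Schwarz, because doing so would introduce a factor of $A_n/B_n$ that blows up as $n\to\infty$. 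To overcome this, I would integrate each such term by parts once more in space, transferring the divergence off $m\nabla\varphi_n$. For example,
\begin{equation*}
\int(\nabla\varphi_n\cdot\vec{b})\,\nabla\cdot(m\nabla\varphi_n)\,dx = \tfrac{1}{2}\!\int \nabla\cdot(m\vec{b})\,|\nabla\varphi_n|^2\,dx - \int m\,(\nabla\vec{b})\nabla\varphi_n\cdot\nabla\varphi_n\,dx,
\end{equation*}
and analogously $\int\psi\Phi_n = -\int m\nabla\psi\cdot\nabla\varphi_n$ and $\int f\varphi_n\Phi_n = -\int m(f\nabla\varphi_n + \varphi_n\nabla f)\cdot\nabla\varphi_n$. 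Under Assumption \ref{Assum_1}, the data $m,\vec{b},f,\psi$ and their derivatives are bounded with $m\geq\delta>0$; combining this with the $L^\infty$ bound on $\varphi_n$ already in hand, every right-hand side contribution is majorized by $C + C\iint_{\R^d\times(s,T)} m|\nabla\varphi_n|^2$ with $C$ independent of $n$.

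To close the estimate, I would set $y(s) := \int_{\R^d} m(s)|\nabla\varphi_n(s)|^2\,dx$, so that the preceding reduces to the integral inequality $y(s) \leq C + C\int_s^T y(r)\,dr$ together with $y(T) = 0$. A backward Grönwall argument then yields $y(s) \leq C e^{C(T-s)}$ uniformly in $n$, and the lower bound $m \geq \delta$ converts this into the claimed $L^2(\R^d)$ gradient bound. Substituting this bound back into the energy identity immediately produces $\iint_{Q_T}(B_n/A_n)\Phi_n^2 \leq C$, which is the third estimate, completing the proof.
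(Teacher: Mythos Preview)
Your proposal is correct and follows essentially the same route as the paper: the $L^\infty$ bound via a maximum-principle comparison (the paper uses the barriers $e^{\pm Ct}-1$ directly on the backward problem, you time-reverse first, but the content is identical), and the two energy bounds by multiplying \eqref{reg_dual} by $\nabla\cdot(m\nabla\varphi_n)$, integrating by parts on each $\Phi_n$-term to avoid the $A_n/B_n$ blowup, and closing with a backward Gr\"onwall. Your handling of the drift term is in fact a slightly cleaner packaging of the same identity the paper derives component-by-component.
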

 
 \begin{proof}
 For the first inequality, observe that the equation for $\varphi_n$ is a backwards uniformly parabolic equation, and we can compare $\varphi_n$ to $e^{\pm Ct}-1$ for large $C>0$ to obtain the bounds with $C$ independent of $n$. \\
 
 For the second bound, we multiply $(\ref{reg_dual})$ by $\nabla \cdot(m \nabla \varphi_n)$ and integrate over $Q_{t,T} = \R^d \times (t,T)$ for $0 \leq t < T$ to see that 
 \begin{equation} \underbrace{\iint_{Q_{t,T}} -\frac{m}{2}\p_t |\nabla \varphi_n|^2}_{\mathcal{I}_1} + \frac{B_n}{A_n} |\nabla \cdot(m\nabla \varphi_n)|^2 - \underbrace{(\nabla \cdot(m \nabla \varphi_n)) (\nabla \varphi_n \cdot \vec{b} - \varphi_n f)}_{\mathcal{I}_2} = \underbrace{\iint_{Q_{t,T}} \psi  (\nabla \cdot(m \nabla \varphi_n))}_{\mathcal{I}_3}. \label{dual_1} \end{equation} Now observe that from integration by parts that
 \begin{equation} \mathcal{I}_1 = -\frac{1}{2} \iint_{Q_{t,T}} \p_t(m |\nabla \varphi_n|^2) - |\nabla \varphi_n|^2 (\p_t m) = \frac{1}{2} \int_{\R^d} m(t)|\nabla \varphi_n(t)|^2dx + \frac{1}{2} \iint_{Q_{t,T}} (\p_t m) |\nabla \varphi_n|^2  \label{Dual_I1} \end{equation}
 \begin{equation} \geq  \int_{\R^d} \frac{\delta}{2}|\nabla \varphi_n(t)|^2 - C \iint_{Q_{t,T}} |\nabla \varphi_n|^2. \end{equation}  And integration by parts along with Young's Inequality combined with $m$ being bounded gives
 \begin{equation} |\mathcal{I}_3| \leq \iint_{Q_{t,T}} |m(\nabla \psi \cdot \nabla \varphi_n)| \leq C(\psi) + C \iint_{Q_{t,T}} |\nabla \varphi_n|^2. \label{dual_I3} \end{equation} For the final term, we split it
 \[ \mathcal{I}_2 = \underbrace{\iint_{Q_{t,T}} (\nabla \cdot(m \nabla \varphi_n))(\nabla \varphi_n \cdot \vec{b}) }_{\mathcal{I}_{2,1}} - \underbrace{\iint_{Q_{t,T}} (\nabla \cdot(m \nabla \varphi_n))(\varphi_n f) }_{\mathcal{I}_{2,2}}.  \] Then we have from integration by parts and Young's Inequality that combined with our first estimate of $||\varphi_n(t)||_{L^{\infty}(\R^d)} \leq C$ that
 \begin{equation} |\mathcal{I}_{2,2}| \leq \iint_{Q_{t,T}}  |m \nabla \varphi_n \cdot (f \nabla \varphi_n + \varphi_n \nabla f)|  \leq C + C \iint_{Q_{t,T}} |\nabla \varphi_n|^2 . \label{Dual_I22} \end{equation} Now to deal with $\mathcal{I}_{2,1}$ observe
 \[ -\mathcal{I}_{2,1} = \iint_{Q_{t,T}} (m \nabla \varphi_n) \cdot \nabla(\nabla \varphi_n \cdot \vec{b}) = \iint_{Q_{t,T}} m \left( \sum_{i,j=1}^{d} \frac{\p \varphi_n}{\p x_i} \frac{\p^2 \varphi_n}{\p x_i \p x_j} b_j + \sum_{i,j=1}^{d} \frac{\p \varphi_n}{\p x_i} \frac{\p \varphi_n}{\p x_j} \frac{\p b_j}{\p x_i} \right).   \] Then notice that
 \[ \iint_{Q_{t,T}} m  \sum_{i,j=1}^{d} \left| \frac{\p \varphi_n}{\p x_i} \frac{\p \varphi_n}{\p x_j} \frac{\p b_j}{\p x_i}\right|  \leq C \iint_{Q_{t,T}}  \sum_{i,j=1}^{d} \left| \frac{\p \varphi_n}{\p x_i} \frac{\p \varphi_n}{\p x_j} \right| \leq  C \iint_{Q_{t,T}} |\nabla \varphi_n|^2,  \]
 where for the final inequality we used Young's Inequality. And we also have from integration by parts on $x_j$ that
 \[ \iint_{Q_{t,T}}    \sum_{i,j=1}^{d} \frac{\p \varphi_n}{\p x_i} \frac{\p^2 \varphi_n}{\p x_i \p x_j} (m b_j)  = \iint_{Q_{t,T}} -\sum_{i,j=1}^{d} \frac{\p^2 \varphi_n}{\p x_i \p x_j} \frac{\p \varphi_n}{\p x_i} (mb_j) - \sum_{i,j=1}^{d} |\frac{\p \varphi_n}{\p x_i}|^2 \frac{\p (mb_j)}{\p x_j}.  \] Hence, we deduce that
 \[  \iint_{Q_{t,T}}   \sum_{i,j=1}^{d} \frac{\p \varphi_n}{\p x_i} \frac{\p^2 \varphi_n}{\p x_i \p x_j} (m b_j)  = -\frac{1}{2} \iint_{Q_{t,T}}  \sum_{i,j=1}^{d} |\frac{\p \varphi_n}{\p x_i}|^2 \frac{\p (mb_j)}{\p x_j}.   \] This implies that we have from Young's Inequality that
 \begin{equation} |\mathcal{I}_{2,1}| \leq C + C \iint_{Q_{t,T}} |\nabla \varphi_n|^2 . \label{DualI21} \end{equation} Using our bounds of $\mathcal{I}_1, \mathcal{I}_2$, and $\mathcal{I}_3$ (see \eqref{Dual_I1}, \eqref{DualI21}, \eqref{Dual_I22}, \eqref{dual_I3}) in \eqref{dual_1} gives
 \begin{equation} \int_{\R^d} \frac{\delta}{2} |\nabla \varphi_n(t)|^2  + \iint_{Q_{t,T}} \frac{B_n}{A_n}  |\nabla \cdot(m \nabla \varphi_n)|^2 \leq C(\psi) + C(\psi) \iint_{Q_{t,T}} |\nabla \varphi_n(t)|^2, \label{BnAnIneq} \end{equation} which by $B_n/A_n \geq 0$, lets us use the integral form of Grönwall's inequality to deduce that
 \begin{equation} \int_{\R^d} |\nabla \varphi_n(t)|^2 \leq C(\psi,T). \label{2nd_phi} \end{equation}

 To obtain the 3rd inequality in the statement of Lemma \ref{dual_phi_ineq}, we use \eqref{2nd_phi} and \eqref{BnAnIneq} to see that
 \[ ||(B_n/A_n)^{1/2} \nabla \cdot(m \nabla \varphi_n)||_{L^2(Q_{t,T})} \leq C(\psi,T) \] for all $0 \leq t \leq T$, so by letting $t \rightarrow 0$ we conclude the third inequality.
 \end{proof}
 
Now these inequalities will be  used to bound $\mathcal{I}_{n,1}$ and $\mathcal{I}_{n,2}$ from $(\ref{In_Eqn})$. Indeed, observe that since $Z$ is bounded we have by H\"older's Inequality with Lemma \ref{dual_phi_ineq} that
\[ |\mathcal{I}_{n,1}| \leq C \iint_{Q_T} \left[ (\frac{B_n}{A_n})^{1/2}|A-A_n| \right] \left[ (\frac{B_n}{A_n})^{1/2} |\nabla \cdot(m \nabla \varphi_n)| \right] \leq C ||(B_n/A_n)^{1/2} (A-A_n)||_{L^2(Q_T)}  \] 
\[ \leq \frac{Cn^{1/2}}{n} = C/n^{1/2}.  \] Also we have the following bound from Lemma \ref{dual_phi_ineq}
\[ |\mathcal{I}_{n,2}| \leq C\iint_{Q_T} |B_n-B| \left(\frac{A_n}{B_n}\right)^{1/2} \left( \frac{B_n}{A_n} \right)^{1/2} |\nabla \cdot(m \nabla \varphi_n)| \] \[ \leq C || (A_n/B_n)^{1/2} (B_n-B) ||_{L^2(Q_T)} \leq C/n^{1/2}.   \] So by letting $n \rightarrow \infty$ in \eqref{In_Eqn}, we see that for any $\psi \in C^{\infty}_c(Q_T)$ that
\[ \iint_{\R^d} (\varrho_1 - \varrho_2) \psi = 0. \] That is we have shown that $\varrho_1 = \varrho_2$ almost everywhere. Now using the density equation in $(\ref{Hele-Shaw})$ with $\varrho_1=\varrho_2$ a.e., we conclude that for all $\varphi \in C^{\infty}_c(Q_T)$
\[ \iint_{Q_T} \nabla \cdot(m \nabla \varphi)(p_1-p_2)=0.  \] By an approximation argument we may take $\varphi = p_1 - p_2$ to conclude by integration by parts that
\[ \iint_{Q_T} m |\nabla(p_1-p_2)|^2 =0, \] so from $m \geq \delta > 0$, we see this implies from $p_i$ being compactly supported that $p_1 = p_2$ a.e., so we conclude that:

\begin{theorem} [Uniqueness] There is at most one weak solution of $(\ref{Hele-Shaw})$  $(\varrho,p)$ in the regularity class $L^{\infty}( (0,T); L^1(\R^d) \cap L^{\infty}(\R^d)) \times L^2([0,T];H^1(\R^d) \cap L^{\infty}(Q_T))$ with compact support and $\varrho(x,0) = \varrho_{\infty}^{(0)}(x)$. $\label{uniqueness}$
\end{theorem}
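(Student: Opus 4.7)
The plan is to execute the Hilbert duality argument whose machinery is set up in the paragraphs preceding the theorem. Given two weak solutions $(\varrho_1,p_1)$ and $(\varrho_2,p_2)$ sharing the initial datum $\varrho_\infty^{(0)}$, I would first subtract their weak formulations of the density equation and factor the difference through the nonnegative coefficients $A,B$ of \eqref{AB}; the structural fact that $p_i(m-\varrho_i)=0$ pointwise forces $0\le A,B\le 1$, which is what makes the duality manageable. This produces identity \eqref{Dual-Problem}, reducing uniqueness to constructing admissible test functions that approximate solutions of the backwards degenerate dual problem \eqref{dual_problem_1}.

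Next I would regularize. Since $A,B$ can vanish, \eqref{dual_problem_1} is a backwards degenerate parabolic PDE, so I would pick smooth truncations $A_n,B_n\in[1/n,1]$ with $L^2(Q_T)$ error $O(1/n)$ and solve the nondegenerate backwards parabolic problem \eqref{reg_dual} on $B_R(0)\times(0,T)$ with terminal condition $\varphi_n(T)=0$ and Dirichlet boundary on $\partial B_R(0)$, where $R$ is chosen so that the supports of $\varrho_i,p_i$ and of a fixed test function $\psi\in C_c^\infty(Q_T)$ lie inside $B_R(0)$. Classical parabolic theory furnishes a smooth $\varphi_n$.

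The technical heart is to prove the three a priori estimates of Lemma \ref{dual_phi_ineq}, uniformly in $n$: an $L^\infty$ bound via comparison with barriers $C(e^{C(T-t)}-1)$; an $H^1$ bound obtained by testing \eqref{reg_dual} against $\nabla\cdot(m\nabla\varphi_n)$, integrating over $Q_{t,T}$, and closing via Grönwall; and a weighted energy bound $\|(B_n/A_n)^{1/2}\nabla\cdot(m\nabla\varphi_n)\|_{L^2(Q_T)}\le C$ that falls out of the same identity. The step I expect to be the main obstacle is the drift contribution $\iint(\nabla\cdot(m\nabla\varphi_n))(\nabla\varphi_n\cdot\vec b)$ in that energy identity: the naive estimate loses a derivative, so one must integrate by parts in $x_j$ and use symmetry of mixed second derivatives to reduce it to an expression purely in $|\nabla\varphi_n|^2$ with smooth coefficients depending on derivatives of $m\vec b$. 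This is exactly where Assumption \ref{Assum_1} (smoothness of $m,\vec b$ and $m\ge\delta>0$) is needed.

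Finally I would substitute $\varphi_n$ into \eqref{Dual-Problem} and split the residual into $\mathcal I_{n,1}+\mathcal I_{n,2}$ as in \eqref{In_Eqn}. Pairing $|A-A_n|$ with $(B_n/A_n)^{1/2}|\nabla\cdot(m\nabla\varphi_n)|$ and $|B-B_n|$ with $(A_n/B_n)^{1/2}|\nabla\cdot(m\nabla\varphi_n)|$ via Cauchy–Schwarz, the bounds $A_n\ge 1/n$ and $B_n\ge 1/n$ together with the weighted energy estimate yield $|\mathcal I_{n,1}|+|\mathcal I_{n,2}|\le C/\sqrt n$. Sending $n\to\infty$ gives $\iint_{Q_T}(\varrho_1-\varrho_2)\psi=0$ for every $\psi\in C_c^\infty(Q_T)$, hence $\varrho_1=\varrho_2$ a.e. Plugging this back into the weak density equation yields $\iint_{Q_T}(p_1-p_2)\,\nabla\cdot(m\nabla\varphi)=0$ for all $\varphi\in C_c^\infty(Q_T)$; approximating $\varphi$ by $p_1-p_2$ (which lies in $L^2([0,T];H^1(\R^d))$ with compact support) and using $m\ge\delta>0$ forces $\nabla(p_1-p_2)\equiv 0$, and compact support then gives $p_1=p_2$ a.e., completing the uniqueness proof.
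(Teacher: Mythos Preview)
Your proposal is correct and follows essentially the same Hilbert duality argument as the paper: the same regularized backwards dual problem \eqref{reg_dual}, the same three a priori bounds of Lemma \ref{dual_phi_ineq} (including the integration-by-parts trick for the drift term $\mathcal I_{2,1}$), the same $C/\sqrt{n}$ estimate on $\mathcal I_{n,1}+\mathcal I_{n,2}$, and the same final step of testing the density equation with $\varphi\approx p_1-p_2$ to force $p_1=p_2$.
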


Note that if Assumptions $\ref{Assum_1}$, $\ref{Assum_2}$, and $\ref{hard_constraint_restriciton}$ are met then our limiting density and pressure $(\varrho_{\infty},p_{\infty})$ satisfy the hypothesis of $(\ref{uniqueness})$. Hence, they are the unique solutions of the density equation in $(\ref{Hele-Shaw})$ in this regularity class with $\varrho_{\infty}(x,0)=\varrho_{\infty}^{(0)}(x)$. This immediately implies with Urysohn’s subsequence principle the following corollary:
 
\begin{corollary} Assume that Assumptions $\ref{Assum_1}$, $\ref{Assum_2}$,  and $\ref{hard_constraint_restriciton}$ are met, then the entire sequence $(\varrho_k,p_k)$ converges to $(\varrho_{\infty},p_{\infty})$ in $L^1(Q_T) \times L^1(Q_T)$.
\end{corollary}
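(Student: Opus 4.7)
The plan is a direct application of Urysohn's subsequence principle, using the compactness argument from Section~3 together with the uniqueness result (Theorem~\ref{uniqueness}).

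First I would take an arbitrary subsequence $(\varrho_{k_j}, p_{k_j})$ of $(\varrho_k, p_k)$. The key observation is that every estimate derived in Section~3 (the $L^\infty(Q_T)$ and uniform compact support bounds of Lemma~\ref{Pressure_Bounds} and Corollary~\ref{density_bounds}, the $L^2(Q_T)$ gradient bound of Lemma~\ref{L2_Pressure_Gradient}, the BV bound of Lemma~\ref{density_bv}, the semi-convexity estimate of Lemma~\ref{semi-convexity}, and the time-derivative bounds of Lemma~\ref{density_time} and Lemma~\ref{pressure_time_lemmma}) depends only on $k$ being large. Therefore these estimates apply verbatim to $(\varrho_{k_j}, p_{k_j})$ for $j$ sufficiently large.

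Next I would repeat the compactness argument at the end of Section~3.5 to extract from $(\varrho_{k_j}, p_{k_j})$ a further subsequence (still denoted by $k_j$) and a limit pair $(\tilde\varrho, \tilde p)$ satisfying $\varrho_{k_j} \to \tilde\varrho$ and $p_{k_j} \to \tilde p$ in $L^1(Q_T)$, together with $\tilde p (m - \tilde\varrho) = 0$ a.e.\ and the weak form of the density equation with initial datum $\varrho_\infty^{(0)}$. The regularity class $L^\infty(0,T; L^1(\R^d) \cap L^\infty(\R^d)) \times L^2(0,T; H^1(\R^d) \cap L^\infty(\R^d))$ with compact support is preserved in the limit thanks to the uniformity of the bounds. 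Hence $(\tilde\varrho, \tilde p)$ lies in the uniqueness class of Theorem~\ref{uniqueness}, and so $(\tilde\varrho, \tilde p) = (\varrho_\infty, p_\infty)$ almost everywhere.

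Since every subsequence of $(\varrho_k, p_k)$ admits a further subsequence converging in $L^1(Q_T) \times L^1(Q_T)$ to the same limit $(\varrho_\infty, p_\infty)$, Urysohn's subsequence principle yields convergence of the entire sequence. There is no real obstacle here; the argument is essentially bookkeeping, with the substantive work having already been done in the compactness proof and in Theorem~\ref{uniqueness}. The only minor point to verify carefully is that the passage to the limit in the weak form (including the obstacle condition $\tilde p(m - \tilde\varrho) = 0$) goes through along any subsequence, which follows from pointwise a.e.\ convergence along a further subsequence combined with the dominated convergence theorem, exactly as in Section~3.5.
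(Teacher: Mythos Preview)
Your proposal is correct and follows exactly the paper's approach: the paper derives the corollary in one line by invoking Urysohn's subsequence principle, relying on the compactness from Section~3 and the uniqueness of Theorem~\ref{uniqueness}. You have simply spelled out that one-line argument in full detail.
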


 This concludes the proof of Theorem \ref{Compactness-Theorem}. In addition, an analogous proof of the above dual problem shows the following comparison principle for bounded domains: (see \cite{guillen2022hele} Proposition 5.1 for the case of $m \equiv 1$ with $\vec{b}=0$)
 
 \begin{theorem}[Density Comparison Principle For Limiting Equation] \label{comparison} Let $\Om \subset \R^d$ be a open and connected subset of $\R^d$ with smooth boundary and $0 \leq t_1 < t_2 < \infty$. Assume that $(\varrho_1,p_1)$ and $(\varrho_2,p_2)$ are weak solutions of $(\ref{Hele-Shaw})$ in $\Om \times (t_1,t_2)$ such that $(\varrho_i,p_i) \in $ $L^{\infty}( (t_1,t_2); L^1(\Om) \cap L^{\infty}(\Om)) \times L^2((t_1,t_2);H^1(\Om) \cap L^{\infty}(\Om))$ with compact support such that $\varrho_1(t=t_1) \geq \varrho_2(t=t_1)$ and $p_1 \geq p_2$ on $\p \Om \times [t_1,t_2]$. Then we have that
 \[ \varrho_1(x,t) \geq \varrho_2(x,t) \text{ a.e. in } \Om \times [t_1,t_2]. \]
 
 \end{theorem}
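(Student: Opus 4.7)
\textbf{Proof proposal for Theorem \ref{comparison}.}

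The plan is to replicate the Hilbert duality argument of Theorem \ref{uniqueness} on the bounded domain $\Om$, with the one-sided twist that two boundary contributions now enter the identity and must carry the correct sign. Set $q := p_1 - p_2$ and $Z := (\varrho_1 - \varrho_2) + (p_1 - p_2)$, and retain the coefficients $A, B$ from \eqref{AB}. Subtracting the two weak formulations against a test function $\varphi \in C^{\infty}(\overline{\Om} \times [t_1, t_2])$ vanishing on $\p\Om \times [t_1, t_2]$ and at $t = t_2$, and integrating $\nabla \varphi \cdot m \nabla q$ by parts in space, produces the identity
\begin{equation*}
\iint_{\Om \times (t_1, t_2)} Z\bigl[A(-\p_t \varphi + \vec{b} \cdot \nabla \varphi - f \varphi) - B\, \nabla \cdot (m \nabla \varphi)\bigr] = \int_{\Om} \varphi(t_1)[\varrho_1 - \varrho_2](t_1)\, dx - \int_{t_1}^{t_2} \int_{\p\Om} q\, m\, \p_\nu \varphi\, dS\, dt.
\end{equation*}
The two boundary contributions on the right are tailored to the hypotheses $[\varrho_1 - \varrho_2](t_1) \ge 0$ and $q \ge 0$ on $\p\Om$.

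Given a non-negative $\psi \in C_c^{\infty}(\Om \times (t_1, t_2))$, I introduce smooth approximants $A_n, B_n$ with $1/n \le A_n, B_n \le 1$ and $\|A_n - A\|_{L^2}, \|B_n - B\|_{L^2} \le 1/n$ exactly as in the uniqueness proof, and solve the backwards uniformly parabolic dual problem
\begin{equation*}
\begin{cases}
\p_t \varphi_n + (B_n/A_n)\, \nabla \cdot (m \nabla \varphi_n) - \vec{b} \cdot \nabla \varphi_n + f \varphi_n = -\psi & \text{in } \Om \times (t_1, t_2), \\
\varphi_n(\cdot, t_2) = 0, \qquad \varphi_n = 0 \text{ on } \p\Om \times [t_1, t_2],
\end{cases}
\end{equation*}
which is classically solvable with smooth $\varphi_n$. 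Reversing time and performing an exponential rescaling to absorb the zeroth-order term $f\varphi_n$, the parabolic maximum principle yields $\varphi_n \ge 0$ throughout $\Om \times [t_1, t_2]$, so $\p_\nu \varphi_n \le 0$ on $\p\Om$. Substituting the dual equation into the displayed identity to eliminate $-\p_t \varphi_n + \vec{b} \cdot \nabla \varphi_n - f \varphi_n$ gives
\begin{equation*}
\iint (\varrho_1 - \varrho_2)\, \psi \;+\; \mathcal{I}_n \;=\; \int_{\Om} \varphi_n(t_1)[\varrho_1 - \varrho_2](t_1)\, dx \;-\; \int_{t_1}^{t_2} \int_{\p\Om} q\, m\, \p_\nu \varphi_n\, dS\, dt,
\end{equation*}
with $\mathcal{I}_n$ having exactly the same structure as in \eqref{In_Eqn}. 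Both right-hand terms are non-negative for every $n$ by the sign information collected above, so once $|\mathcal{I}_n| \to 0$ the inequality $\iint (\varrho_1 - \varrho_2)\, \psi \ge 0$ follows for every admissible $\psi$, forcing $\varrho_1 \ge \varrho_2$ a.e.

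The main technical obstacle is the bounded-domain analog of Lemma \ref{dual_phi_ineq}, which supplies the uniform bounds $\|\varphi_n\|_{L^{\infty}}$, $\|\nabla \varphi_n\|_{L^{\infty}_t L^2_x}$, and $\|(B_n/A_n)^{1/2} \nabla \cdot (m \nabla \varphi_n)\|_{L^2}$ needed to derive $|\mathcal{I}_n| \le C/\sqrt{n}$. Running the energy estimate of Lemma \ref{dual_phi_ineq} on $\Om$, namely multiplying the dual equation by $\nabla \cdot(m \nabla \varphi_n)$, now generates a drift-induced boundary contribution $\frac{1}{2}\int_{t_1}^{t_2}\int_{\p\Om} m(\vec{b} \cdot \nu)(\p_\nu \varphi_n)^2\, dS\, dt$ via the identity $\nabla \varphi_n = (\p_\nu \varphi_n)\,\nu$ on $\p\Om$. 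Absorbing this term without losing the degenerate factor $(B_n/A_n)^{1/2}$ on the left-hand side requires pairing the trace inequality with regularity estimates adapted to the weighted operator $\nabla \cdot (m \nabla \cdot)$ slicewise in $t$ and invoking Young's inequality; all other terms are estimated verbatim as in Lemma \ref{dual_phi_ineq}.
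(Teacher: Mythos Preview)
Your approach is the one the paper has in mind: it states only that ``an analogous proof of the above dual problem'' yields the comparison principle, citing \cite{guillen2022hele} Proposition~5.1 for the driftless case, so the Hilbert duality scheme you outline---subtract weak formulations, introduce the regularized backwards dual problem for $\varphi_n$ with homogeneous Dirichlet data, use $\varphi_n\ge 0$ and $\partial_\nu\varphi_n\le 0$ to sign the two boundary contributions, and send $\mathcal I_n\to 0$ via the estimates of Lemma~\ref{dual_phi_ineq}---is exactly the intended argument. Your identification of the extra boundary term $\tfrac12\int\!\!\int_{\partial\Om} m(\vec b\cdot\nu)(\partial_\nu\varphi_n)^2$ arising in the energy computation for $\mathcal I_{2,1}$ is a genuine observation that the paper's one-line remark glosses over (and which, strictly speaking, is already present in the uniqueness proof on $B_R$).

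That said, your proposed resolution of this boundary term is the weak point of the write-up. Invoking ``the trace inequality with regularity estimates adapted to $\nabla\cdot(m\nabla\cdot)$'' would bound $\|\partial_\nu\varphi_n\|_{L^2(\partial\Om)}^2$ by $C\|\nabla\cdot(m\nabla\varphi_n)\|_{L^2(\Om)}^2$ plus lower-order terms, but the good term on the left of the energy identity carries only the degenerate weight $B_n/A_n\ge 1/n$, so you cannot absorb a full $\|\nabla\cdot(m\nabla\varphi_n)\|_{L^2}^2$ without the constants blowing up in $n$. A cleaner route is to avoid the second integration by parts in the treatment of $\mathcal I_{2,1}$ altogether: write $\iint (\nabla\varphi_n\cdot\vec b)\,\nabla\cdot(m\nabla\varphi_n)$ directly as a product and apply Young's inequality with parameter $\varepsilon$ to get $\varepsilon\iint (B_n/A_n)^{-1}|\nabla\varphi_n\cdot\vec b|^2 + \tfrac{1}{4\varepsilon}\iint (B_n/A_n)|\nabla\cdot(m\nabla\varphi_n)|^2$; the second piece is absorbed for $\varepsilon$ large, but the first now carries $(B_n/A_n)^{-1}\le n$, which is again bad. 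The honest fix is either to exploit additional structure (e.g.\ a sign condition on $\vec b\cdot\nu$ along $\partial\Om$, which holds in the paper's actual application to balls after translating the center) or to run a slightly different multiplier argument; as written, neither the paper nor your proposal closes this gap in full generality with drift.
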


 \section{The Complimentary Relationship on the Pressure} Now we focus on deriving the complimentary relationship. That is we will show that $p_{\infty}$ is a weak solution to the pressure equation in $(\ref{Hele-Shaw})$. To derive the complimentary relationship we modify the arguments in \cite{david2021incompressible} to obtain a uniform $L^4(Q_{\tau,T})$ bounds on $|\nabla p_k|$. To achieve such bounds, we first prove the following technical lemma: 
 
 \begin{lemma}  Let $f \in C^{\infty}_c(\R^d)$ then there is a constant $C>0$ independent of $f$ such that
 \[ \int_{\R^d} |\nabla f|^4 \leq C \int_{\R^d} |f|^2 |\nabla f|^2 + C \int_{\R^d} \frac{|f|^2}{m}|\nabla \cdot(m \nabla f)|^2 + C\sum_{i,j=1}^{d} \int_{\R^d}   m |f|^2|\p^2_{x_i x_j} f|^2.  \]

 \end{lemma}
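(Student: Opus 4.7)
The plan is to reduce the $L^4$ norm of $\nabla f$ to the $L^2$ norm of $f\, D^2 f$ and $f\, \Delta f$ by a single integration by parts, and then re-express $\Delta f$ in terms of $\nabla \cdot(m \nabla f)$. Since $f \in C^{\infty}_c(\R^d)$ there are no boundary terms, and since Assumption \ref{Assum_1} supplies a positive lower bound $m\geq\delta$ and bounded derivatives of $m$, the weights $m$ and $1/m$ can be shuffled through the constants.

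\medskip

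First I would write
\[
\int_{\R^d} |\nabla f|^4 \, dx = \sum_{i=1}^{d} \int_{\R^d} |\nabla f|^2\, (\p_{x_i} f)\,(\p_{x_i} f)\, dx
\]
and integrate by parts in $x_i$, moving one of the factors of $\p_{x_i} f$ off, which produces
\[
\int_{\R^d} |\nabla f|^4 \, dx = -\int_{\R^d} f\, |\nabla f|^2 \Delta f \, dx - 2\int_{\R^d} f \sum_{i,j=1}^{d} (\p_{x_i} f)(\p_{x_j} f)(\p^2_{x_i x_j} f) \, dx.
\]
The Cauchy--Schwarz inequality bounds the double sum by $|\nabla f|^2 |D^2 f|$, and two applications of Cauchy--Schwarz (in $L^2(\R^d)$) yield
\[
\int_{\R^d}|\nabla f|^4 \leq 2\Bigl(\int |\nabla f|^4\Bigr)^{1/2}\Bigl(\int f^2 |D^2 f|^2\Bigr)^{1/2} + \Bigl(\int |\nabla f|^4\Bigr)^{1/2}\Bigl(\int f^2 (\Delta f)^2\Bigr)^{1/2}.
\]
Dividing through (if the left-hand side is nonzero) and squaring gives the intermediate bound
\[
\int_{\R^d} |\nabla f|^4 \leq C\int_{\R^d} f^2 |D^2 f|^2 + C\int_{\R^d} f^2 (\Delta f)^2,
\]
which is the classical Bernis--Friedman type inequality.

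\medskip

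Next I would replace $\Delta f$ by $\frac{1}{m}\nabla\cdot(m\nabla f)$ using the identity $\Delta f = \frac{1}{m}\nabla\cdot(m\nabla f) - \nabla\lambda\cdot\nabla f$ with $\lambda=\log m$. Squaring and applying $(a+b)^2\le 2a^2+2b^2$ gives
\[
f^2 (\Delta f)^2 \leq \frac{2f^2}{m^2}|\nabla\cdot(m\nabla f)|^2 + 2 f^2 |\nabla\lambda|^2 |\nabla f|^2.
\]
Since $m\geq\delta>0$ we have $1/m^2\leq \delta^{-1}(1/m)$, and since $|\nabla\lambda|$ is bounded by Assumption \ref{Assum_1} the second summand is absorbed into $C\int f^2|\nabla f|^2$. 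Finally the inequality $1\leq m/\delta$ upgrades $\int f^2|D^2 f|^2 \leq \delta^{-1}\int m f^2|D^2 f|^2$, producing exactly the three terms on the right-hand side of the lemma.

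\medskip

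The only real obstacle is bookkeeping: one must be careful to use Cauchy--Schwarz/Young in a way that cleanly absorbs $\int|\nabla f|^4$ back to the left side before introducing the weight $1/m$, so that the resulting constant depends only on $\delta$ and $\sup|\nabla\lambda|$ (and not on $f$). Apart from this, the proof is a fairly standard weighted extension of the Bernis--Friedman type inequality.
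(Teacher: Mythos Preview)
Your proposal is correct and follows essentially the same approach as the paper: both start from the same integration-by-parts identity
\[
\int |\nabla f|^4 = -\int f|\nabla f|^2 \Delta f - 2\int f \sum_{i,j}(\p_{x_i}f)(\p_{x_j}f)(\p^2_{x_ix_j}f),
\]
and both use the relation $\Delta f = \tfrac{1}{m}\nabla\cdot(m\nabla f) - \nabla\lambda\cdot\nabla f$ together with $m\ge\delta$ to insert the weights. The only cosmetic difference is that the paper applies Young's inequality with explicit parameters (producing fractions $\tfrac14,\tfrac23$ of $\int|\nabla f|^4$ to absorb), whereas you first apply Cauchy--Schwarz in $L^2$, divide by $(\int|\nabla f|^4)^{1/2}$, and square; both routes arrive at the same inequality.
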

 
 \begin{proof} Observe that from integration by parts that
 \[ \int_{\R^d} |\nabla f|^4 =  \int_{\R^d} \left( |\nabla f|^2 \nabla f \right) \cdot \nabla f  =  -\underbrace{\int_{\R^d} f \Delta f  |\nabla f|^2}_{\mathcal{I}_1} -  \underbrace{\int_{\R^d} f (\nabla f) \cdot \nabla(|\nabla f|^2)}_{\mathcal{I}_2}.  \] Expaning out $\mathcal{I}_2$ shows us that
 \[ \mathcal{I}_2 = \sum_{i,j=1}^{d} 2\int_{\R^d} f(\p_{x_j} f)(\p_{x_i} f) (\p^2_{x_i x_j} f). \]  Hence, by applying Young's Inequality, we have that
 \[ |\mathcal{I}_2| \leq \frac{1}{4} \int_{\R^d} |\nabla f|^4 + 4 \sum_{i,j=1}^{d} \int_{\R^d} f^2 |\p^2_{x_i x_j} f|^2. \] Now using that $\frac{1}{m} \nabla \cdot(m \nabla f) = \nabla \lambda \cdot \nabla f + \Delta f$ implies
 \[ \mathcal{I}_1 = \int_{\R^d} \frac{f}{m} \nabla \cdot(m \nabla f) |\nabla f|^2 - \int_{\R^d} f(\nabla \lambda \cdot \nabla f) |\nabla f|^2. \] Now by applying Young's Inequality and Cauchy-Schwarz we have that
 \[ |\mathcal{I}_1| \leq \frac{1}{2} \int_{\R^d} \frac{|f|^2}{m^2} |\nabla \cdot(m \nabla f)|^2 + \frac{2}{3} \int_{\R^d} |\nabla f|^4  + C \int_{\R^d} |f|^2 |\nabla f|^2.   \] By using $m \geq \delta >0$ and combining terms we obtain the desired bound.
 \end{proof}
 
 This gives us the following $L^4(Q_T)$ control of the pressure gradient by an approximation argument due to our $L^{\infty}(Q_T)$ control of $p_k$ and $L^2(Q_T)$ control of $\nabla p_k$ (see Lemma \ref{Pressure_Bounds} and Lemma \ref{L2_Pressure_Gradient}): 
 
\begin{corollary} If Assumptions $\ref{Assum_1}$, $\ref{Assum_2}$, and $\ref{hard_constraint_restriciton}$ are met, then there is a constant $C=C(T)$ such that
\begin{equation}
    \iint_{Q_T} |\nabla p_k|^4 \leq C + C \iint_{Q_T} \frac{p_k}{m} |\nabla \cdot(m \nabla p_k)|^2 + C \sum_{i,j=1}^{d} \iint_{Q_T} m p_k |\p^2_{x_i x_j} p_k|^2. \end{equation}
    \label{L4_Pressure_Bounds1}
\end{corollary}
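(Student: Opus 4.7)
The plan is to apply the preceding pointwise-in-space inequality with $f = p_k(\cdot,t)$ for each $t$, integrate in time, and then use the uniform $L^\infty$ and $L^2$ gradient bounds on the pressure to absorb two of the resulting terms into a single constant. Since we are working at the a priori level, I treat $p_k$ as a smooth compactly supported function as is standard for the mPME (the inequality for the genuine $p_k$ follows by passing to the limit in the smooth approximation scheme discussed in Section~\ref{notation}; all three terms on the right-hand side are lower semicontinuous, so the inequality is stable).

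Applying the Lemma slicewise to $f = p_k(\cdot,t)$ and integrating over $t \in (0,T)$ gives
\begin{equation*}
\iint_{Q_T} |\nabla p_k|^4 \;\leq\; C \iint_{Q_T} p_k^2 |\nabla p_k|^2 \;+\; C \iint_{Q_T} \frac{p_k^2}{m} |\nabla \cdot (m \nabla p_k)|^2 \;+\; C \sum_{i,j=1}^{d} \iint_{Q_T} m\, p_k^2\, |\partial^2_{x_i x_j} p_k|^2.
\end{equation*}
Next I invoke the uniform bound $0 \leq p_k \leq C$ from Lemma \ref{Pressure_Bounds} in the form $p_k^2 \leq C p_k$ to convert every $p_k^2$ factor on the right-hand side into $p_k$ (at the expense of enlarging $C$). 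This directly produces the two integral terms appearing in the statement of the corollary.

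It remains to bound the first term by a constant. Using $p_k \leq C$ once more, followed by the $L^2(Q_T)$ gradient bound of Lemma \ref{L2_Pressure_Gradient}, one obtains
\begin{equation*}
\iint_{Q_T} p_k^2 |\nabla p_k|^2 \;\leq\; C \iint_{Q_T} |\nabla p_k|^2 \;\leq\; C(T),
\end{equation*}
which is absorbed into the additive constant on the right-hand side of the claimed inequality. There is no genuine obstacle here; the corollary is essentially a bookkeeping exercise combining the pointwise Lemma with the two uniform pressure estimates already proved in this section. The only point requiring mild care is the reduction to smooth compactly supported $p_k$, but this is exactly the standard PME approximation convention adopted throughout the paper.
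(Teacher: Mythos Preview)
Your proposal is correct and matches the paper's own argument: the paper simply records that the corollary follows from the preceding lemma ``by an approximation argument due to our $L^{\infty}(Q_T)$ control of $p_k$ and $L^2(Q_T)$ control of $\nabla p_k$,'' which is exactly the combination you spell out (apply the lemma to $f=p_k(\cdot,t)$, use $p_k^2\le Cp_k$, and absorb the first term via Lemma~\ref{L2_Pressure_Gradient}).
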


 
 Now we control the terms arising in Corollary $\ref{L4_Pressure_Bounds1}$ using \eqref{PME_pressure}.
 
 \begin{lemma} If Assumptions $\ref{Assum_1}$, $\ref{Assum_2}$, $\ref{hard_constraint_restriciton}$, and $\ref{complimentary_assumpution}$ are met, then there is a constant $C$ and $\tilde{C}$ such that for $k>19/3$ we have
 \[   \frac{1}{12} \sum_{i,j=1}^{d} \iint_{Q_T} mp_k|\p^2_{x_i x_j} p_k|^2 + (k-\frac{19}{3}) \iint_{Q_T} \frac{p_k}{m} |\nabla \cdot(m \nabla p_k) + mF|^2 \leq C \int_{\R^d} |\nabla p_k(x,0)|^2 dx + C \leq \tilde{C}. \] \label{L4_2}
 \end{lemma}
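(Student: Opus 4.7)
The plan is to multiply the pressure equation \eqref{PME_pressure} by the quantity $\Phi_k := \nabla\cdot(m\nabla p_k) + mF = m(w_k+F)$ and integrate over $Q_T$. This multiplier is dictated by the structure of \eqref{PME_pressure}: the nonlinear RHS term $(k-1)p_k(w_k+F)$ becomes $(k-1)\frac{p_k}{m}|\Phi_k|^2$ upon multiplication, which is precisely the quantity we wish to bound. After rearranging one obtains the working identity
\[ (k-1)\iint_{Q_T}\frac{p_k}{m}|\Phi_k|^2 = \underbrace{\iint_{Q_T}\Phi_k\,\p_t p_k}_{\mathcal{I}} \;-\; \underbrace{\iint_{Q_T}\Phi_k|\nabla p_k|^2}_{\mathcal{II}} \;-\; \underbrace{\iint_{Q_T}\Phi_k(\vec{b}\cdot\nabla p_k)}_{\mathcal{III}}. \]

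For $\mathcal{I}$, I would integrate the divergence factor of $\Phi_k$ by parts in space to obtain $-\tfrac12 \iint m\,\p_t|\nabla p_k|^2$, then apply one more integration by parts in time. Combined with an analogous time-integration by parts on the $mF\,\p_t p_k$ piece, this yields the boundary contribution $\tfrac12\int m(x,0)|\nabla p_k(x,0)|^2\,dx \leq C\int|\nabla p_k(x,0)|^2\,dx$ controlled by Assumption~\ref{complimentary_assumpution}, plus interior terms bounded by $C\iint|\nabla p_k|^2 \leq C$ via Lemmas~\ref{Pressure_Bounds} and~\ref{L2_Pressure_Gradient}. The drift term $\mathcal{III}$ is handled similarly: an additional integration by parts shifts the gradient off the divergence factor, and every remaining contribution is of the form $m(\nabla p_k)^T(\nabla\vec{b})(\nabla p_k)$ or $\nabla\cdot(m\vec{b})|\nabla p_k|^2$, each bounded by $C\iint|\nabla p_k|^2 = O(1)$.

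The heart of the argument is $\mathcal{II}$. Integration by parts produces
\[ -\iint \nabla\cdot(m\nabla p_k)|\nabla p_k|^2 = 2\iint m(\nabla p_k)^T D^2 p_k (\nabla p_k), \]
which is pointwise dominated by $2m|\nabla p_k|^2 |D^2 p_k|$. A Young's inequality with parameter $\alpha>0$ of the form
\[ 2m|\nabla p_k|^2|D^2 p_k| \leq \alpha\, mp_k|D^2 p_k|^2 + \tfrac{1}{\alpha}\tfrac{m|\nabla p_k|^4}{p_k} \]
generates the first term we want on the LHS together with an a priori problematic $1/p_k$ factor at the free boundary $\{p_k=0\}$. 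The resolution is to couple with Corollary~\ref{L4_Pressure_Bounds1},
\[ \iint|\nabla p_k|^4 \leq C + C\iint\frac{p_k}{m}|\Phi_k|^2 + C\sum_{i,j}\iint mp_k|\p^2_{x_ix_j}p_k|^2, \]
so that the $L^4$ gradient contribution produced by Young is itself re-expressed in terms of $A := \sum_{i,j}\iint mp_k|\p^2_{x_ix_j}p_k|^2$ and $B := \iint \frac{p_k}{m}|\Phi_k|^2$. The $p_k$ weight on $A$ and inside Corollary~\ref{L4_Pressure_Bounds1} is matched with the $1/p_k$ produced by Young so as to keep the bookkeeping consistent.

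After the substitution, one arrives at a linear estimate of the form $(k-1)B \leq \lambda_1 A + \lambda_2 B + C\int|\nabla p_k(x,0)|^2 + C$ where $\lambda_1,\lambda_2$ depend explicitly on $\alpha$ and on the constants from the Corollary. Choosing $\alpha$ so that the residual coefficients on $A$ and $B$ are as large as possible yields $\frac{1}{12}$ and $k-\frac{19}{3}$ respectively; the threshold $k > \frac{19}{3}$ is precisely the sharp one beyond which the coefficient of $B$ on the LHS remains positive and the iterative absorption closes. The chief technical obstacle is therefore the careful accounting of Young's-inequality constants through the simultaneous absorption of $A$ and $B$, and verification that the specific values $\tfrac{1}{12}$ and $k-\tfrac{19}{3}$ are achievable by a single consistent choice of Young parameters.
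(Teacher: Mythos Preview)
Your overall strategy—multiply \eqref{PME_pressure} by $\Phi_k=\nabla\cdot(m\nabla p_k)+mF$ and integrate—is exactly the paper's, and your treatment of $\mathcal{I}$ and $\mathcal{III}$ is fine (in fact your handling of $\mathcal{III}$, via one integration by parts and the identity $\sum_{i,j}mb_j(\p_{x_i}p)(\p^2_{x_ix_j}p)=\tfrac12\,m\vec{b}\cdot\nabla|\nabla p|^2$, is somewhat cleaner than the paper's route through third derivatives).

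The genuine gap is in $\mathcal{II}$. Your Young inequality
\[
2m|\nabla p_k|^2|D^2p_k|\le \alpha\,mp_k|D^2p_k|^2+\frac{1}{\alpha}\frac{m|\nabla p_k|^4}{p_k}
\]
produces $\iint m|\nabla p_k|^4/p_k$, which is singular at $\{p_k=0\}$, and your proposed resolution does not work: Corollary~\ref{L4_Pressure_Bounds1} bounds $\iint|\nabla p_k|^4$, not $\iint|\nabla p_k|^4/p_k$. The $p_k$ weights on the right-hand side of that corollary sit on \emph{other} terms in an \emph{upper} bound and cannot be used to cancel a $1/p_k$ you have introduced on the left. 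Near the free boundary $p_k$ vanishes linearly while $|\nabla p_k|$ stays of order one, so there is no reason for $\iint|\nabla p_k|^4/p_k$ to even be uniformly bounded in $k$; the absorption you describe cannot close.

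The paper avoids the singularity altogether by integrating by parts \emph{twice} rather than once:
\[
\iint_{Q_T}\nabla\cdot(m\nabla p_k)\,|\nabla p_k|^2
=\iint_{Q_T}p_k\,\nabla\cdot\bigl(m\nabla(|\nabla p_k|^2)\bigr),
\]
which places the weight $p_k$ in front for free. Expanding with $\Delta(|\nabla p_k|^2)=2\sum_{i,j}|\p^2_{x_ix_j}p_k|^2+2\nabla p_k\cdot\nabla(\Delta p_k)$ immediately yields the good term $2\iint mp_k\sum_{i,j}|\p^2_{x_ix_j}p_k|^2$; the cross term $2\iint mp_k\nabla p_k\cdot\nabla(\Delta p_k)$ is integrated by parts once more and combined algebraically with the original expression for $\mathcal{I}_{2,1}$ to produce $-\tfrac23\iint\frac{p_k}{m}|\nabla\cdot(m\nabla p_k)|^2$. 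It is this manipulation, together with the lower-order remainders controlled via Corollary~\ref{L4_Pressure_Bounds1}, that yields the precise constants $\tfrac{1}{12}$ and $k-\tfrac{19}{3}$.
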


 \begin{remark} This estimate with our smooth approximations of $p_k$ constructed in Section \ref{weak_mPME} implies that $p_k \in H^2_{loc}( \{p_k>0\} ).$ Then we interpret $p_k |\p^2_{x_i x_j} p_k|^2$ classically on $\{p_k>0\}$ and as 0 on $\{p_k=0\}$.
 \end{remark}

\begin{proof} We will drop the subscript $k$ for the rest of this proof. Multiplying the pressure equation \eqref{PME_pressure} by $\omega := \nabla \cdot(m \nabla p) + mF$ and integrating over $Q_T$ yields
\begin{equation} \underbrace{\iint_{Q_T} \omega \p_t p}_{\mathcal{I}_1} =  \underbrace{\iint_{Q_T} \omega |\nabla p|^2}_{\mathcal{I}_2}  + \underbrace{\iint_{Q_T} \omega(\nabla p \cdot \vec{b})}_{\mathcal{I}_3} + \iint_{Q_T} (k-1) \frac{p}{m} |\omega|^2. \label{all_term}  \end{equation} We integrate each term separately. Observe
\[ \mathcal{I}_1 = \iint_{Q_T} \left[\nabla \cdot(m \nabla p)\right] \p_t p + \iint_{Q_T} mF (\p_t p) = \iint_{Q_T} -m \nabla p \cdot \p_t(\nabla p) + mF(\p_t p)  \]
\[ = \iint_{Q_T} -\frac{1}{2} \p_t(m |\nabla p|^2) + \frac{1}{2} m_t |\nabla p|^2 + mF(\p_t p). \]
So now using $mF (\p_t p) = \p_t(p mF) - p\p_t(mF)$ and integrating gives
\[ = \int_{\R^d} \frac{1}{2} \left[ m(x,0)|\nabla p(x,0)|^2 - m(x,T)|\nabla p(x,T)|^2 + m(x,T)F(x,T)p(x,T)-m(x,0)F(x,0)p(x,0) \right] + ... \] \[ + \iint_{Q_T} \frac{1}{2} m_t|\nabla p|^2 - p\p_t (mF).  \] So our $L^{\infty}(Q_T)$ bounds on $p_k$ (Lemma \ref{Pressure_Bounds}) and $L^2(Q_T)$ bounds on $\nabla p_k$ (Lemma \ref{L2_Pressure_Gradient}) implies
\begin{equation} \boxed{\mathcal{I}_1 \leq C \int_{\R^d} |\nabla p(x,0)|^2 + C.} \label{Final_I1} \end{equation}\\
Now we deal with $\mathcal{I}_2$. Observe that
\begin{equation} \mathcal{I}_2 = \underbrace{\iint_{Q_T} \nabla \cdot(m \nabla p) |\nabla p|^2}_{\mathcal{I}_{2,1}} + \iint_{Q_T} mF |\nabla p|^2, \label{I2_eq1} \end{equation} so integrating by parts twice gives
\[ \mathcal{I}_{2,1} = \iint_{Q_T}p \nabla \cdot(m \nabla(|\nabla p|^2)) = \underbrace{\iint_{Q_T} p (\nabla m \cdot \nabla(|\nabla p|^2))}_{\mathcal{J}_1} + \underbrace{\iint_{Q_T}  mp \Delta(|\nabla p|^2)}_{\mathcal{J}_2}. \] Now using that $\Delta(|\nabla p|^2) = 2\sum_{i,j=1}^{d} |\p^2_{x_i x_j} p|^2 + 2 \nabla p \cdot \nabla(\Delta p)$ gives
\[ \mathcal{J}_2 = 2\iint_{Q_T} mp \sum_{i,j=1}^{d} |\p^2_{x_i x_j} p|^2 + \underbrace{2\iint_{Q_T} mp(\nabla p \cdot \nabla (\Delta p))}_{\mathcal{J}_{2,1}}, \] and integration by parts also implies
\begin{equation} \mathcal{J}_{2,1} = -2 \iint_{Q_T} \nabla \cdot(p (m \nabla p)) \Delta p = -2 \iint_{Q_T} m|\nabla p|^2 \Delta p + p \Delta p \left[ \nabla \cdot(m \nabla p) \right]. \label{J21_eq1} \end{equation} Now we want to simplify the expression for $\mathcal{J}_{2,1}$ using our previous expressions. First notice from the definition of $\mathcal{I}_{2,1}$ in \eqref{I2_eq1} that
\begin{equation} \mathcal{I}_{2,1} -  \iint_{Q_T} [\nabla m \cdot \nabla p] |\nabla p|^2  =  \iint_{Q_T} m \Delta p |\nabla p|^2 \label{newI21}. \end{equation} So we obtain from \eqref{newI21} and \eqref{J21_eq1} that
\begin{equation} \mathcal{J}_{2,1} = -2 \mathcal{I}_{2,1} + 2 \iint_{Q_T} [\nabla m \cdot \nabla p]|\nabla p|^2 - p \Delta p[\nabla \cdot(m \nabla p)]. \label{J21_1} \end{equation}  Observe that by unpacking our definitions that
\[ \mathcal{I}_{2,1} = \mathcal{J}_1 + 2 \iint_{Q_T} mp \sum_{i,j=1}^{d} |\p^2_{x_i x_j} p|^2 + \mathcal{J}_{2,1}, \] so by using \eqref{J21_1}, we see that
\begin{equation}  \mathcal{I}_{2,1} = \frac{1}{3} \mathcal{J}_1 + \frac{2}{3} \iint_{Q_T} mp \sum_{i,j=1}^d |\p^2_{x_i x_j} p|^2 +  \iint_{Q_T} \frac{2}{3} [\nabla m \cdot \nabla p]|\nabla p|^2 - \frac{2}{3} p \Delta p[\nabla \cdot(m \nabla p)]. \label{newI2} \end{equation}
Now our goal is to compute lower bounds of $\mathcal{I}_{2,1}$. To deal with the $\iint_{Q_T} [\nabla m \cdot \nabla p] |\nabla p|^2$ term, we first obtain bounds on $\iint_{Q_T} p \Delta p(\nabla p \cdot \nabla m)$. Integrating by parts gives that
\begin{equation} \iint_{Q_T} p\Delta p (\nabla p \cdot \nabla m) = -\iint_{Q_T} |\nabla p|^2[\nabla p \cdot \nabla m] - \underbrace{\iint_{Q_T} p [\nabla p \cdot \nabla( \nabla p \cdot \nabla m)]}_{\mathcal{J}_{2,2}}. \label{L4_I2} \end{equation} Note that $p[ \nabla p \cdot \nabla(\nabla p \cdot \nabla m)] = p[ \nabla p^T D^2p \nabla m + \nabla p^T D^2m \nabla p]  $ so Young's Inequality with $\e$  gives
\[ |\mathcal{J}_{2,2}| \leq \sum_{i,j=1}^{d} \frac{\delta}{2} \iint_{Q_T} p |\p^2_{x_i x_j} p|^2  + C \iint_{Q_T} p|\nabla p|^2  + \iint_{Q_T} p |(\nabla p)^T (D^2m) (\nabla p)| \]
\[ \leq \sum_{i,j=1}^{d} \frac{1}{2} \iint_{Q_T} mp |\p^2_{x_i x_j} p|^2  + \tilde{C} \iint_{Q_T} p|\nabla p|^2   \]
\begin{equation} \leq \sum_{i,j=1}^{d} \frac{1}{2} \iint_{Q_T} mp |\p^2_{x_i x_j} p|^2  + C. \label{J22_Bounds} \end{equation} In this final inequality we used our $L^2(Q_T)$ bounds on $\nabla p$ (Lemma \ref{L2_Pressure_Gradient}) and $L^{\infty}(Q_T)$ bounds on $p$ (Lemma \ref{Pressure_Bounds}).\\

 And to deal with the $[\nabla m \cdot \nabla p]|\nabla p|^2$ term in $\mathcal{J}_{2,1}$, we observe from \eqref{L4_I2} and \eqref{J22_Bounds} that
\begin{equation} \iint_{Q_T} |\nabla p|^2[\nabla p \cdot \nabla m] \geq - C -\iint_{Q_T} p \Delta p(\nabla p \cdot \nabla m) - \frac{1}{2} \sum_{i,j=1}^{d} \iint_{Q_T} mp|\p^2_{x_i x_j} p|^2 . \label{pm_term} \end{equation} 
Now using \eqref{pm_term} in \eqref{newI2} gives us
\[ \mathcal{I}_{2,1} \geq -C + \frac{1}{3} \mathcal{J}_1 + \frac{1}{3} \left[\iint_{Q_T}  mp \sum_{i,j=1}^{d} |\p^2_{x_i x_j} p|^2 \right] - \frac{2}{3}  \left[ \iint_{Q_T} p\Delta p[\nabla \cdot(m \nabla p)] + p \Delta p(\nabla p \cdot \nabla m) \right] . \] Now we use that
\[ p \Delta p[\nabla \cdot(m \nabla p)] + p \Delta p (\nabla p \cdot \nabla m) = mp |\Delta p|^2 + 2p\Delta p (\nabla p \cdot \nabla m)   \]
\[ \leq mp |\Delta p|^2 + 2p \Delta p(\nabla p \cdot \nabla m) +  \frac{p}{m} |\nabla p \cdot \nabla m|^2 = \frac{p}{m} |\nabla \cdot(m \nabla p)|^2. \] Hence, we have that
\begin{equation} \mathcal{I}_{2,1} \geq -C + \frac{1}{3}  \mathcal{J}_1  + \frac{1}{3} \iint_{Q_T} mp \sum_{i,j=1}^{d} |\p^2_{x_i x_j} p|^2  - \left[ \frac{2}{3} \iint_{Q_T}  \frac{p}{m} |\nabla \cdot(m \nabla p)|^2 \right]. \label{I21_Part2} \end{equation} We use integration by parts to see
\[ \mathcal{J}_1 = -\iint_{Q_T} \left([\nabla p \cdot \nabla m] + p \Delta m\right) |\nabla p|^2. \] Hence, by using Young's Inequality with $\e$ and our $L^2(Q_T)$  bounds on $\nabla p$ (Lemma \ref{L2_Pressure_Gradient}) and $L^{\infty}(Q_T)$ bounds on the pressure (Lemma \ref{Pressure_Bounds}), we see that for $\e>0$
\[ |\mathcal{J}_1| \leq \e \iint_{Q_T}  |\nabla p|^4 + C(\e) \iint_{Q_T} ||\nabla m||^2_{\infty} |\nabla p|^2  + C \iint_{Q_T} |\nabla p|^2  \]
\[ \leq \e \iint_{Q_T} |\nabla p|^4 + C(\e).\] Now using Corollary \ref{L4_Pressure_Bounds1} with an approximation argument and a fixed $\e>0$ chosen such that for the constant $C$ in from Corollary \ref{L4_Pressure_Bounds1} satisfies $\e C \leq 1/2$ gives
\begin{equation} |\mathcal{J}_1| \leq C + \frac{1}{2} \iint_{Q_T} \frac{p}{m} |\nabla \cdot(m \nabla p)|^2 + \frac{1}{2} \sum_{i,j=1}^{d} mp |\p^2_{x_i x_j} p|^2. \label{J1_Bounds} \end{equation} Then by using \eqref{J1_Bounds} in \eqref{I21_Part2} we obtain
\[ \mathcal{I}_{2,1} \geq \frac{1}{6} \sum_{i,j=1}^{d} \iint_{Q_T} mp |\p^2_{x_i x_j} p|^2  - \frac{5}{6} \iint_{Q_T} \frac{p}{m} |\nabla \cdot(m \nabla p)|^2  - C. \] Now using that
\[ |\nabla \cdot(m \nabla p)|^2 \leq 4|\nabla \cdot(m \nabla p) + mF|^2 + 4|mF|^2, \] and that $F$ is bounded due to our assumptions implies
\[ \mathcal{I}_{2,1} \geq \frac{1}{6} \sum_{i,j=1}^{d} \iint_{Q_T} mp|\p^2_{x_i x_j} p|^2 - \frac{10}{3} \iint_{Q_T} \frac{p}{m} |\nabla \cdot(m \nabla p)+mF|^2  - C. \]
So now using \eqref{I2_eq1} and our $L^2(Q_T)$ bound on $\nabla p$ (Lemma \ref{L2_Pressure_Gradient}), we deduce that

\begin{equation} \boxed{\mathcal{I}_2 \geq -C + \frac{1}{6} \sum_{i,j=1}^{d} \iint_{Q_T}  mp  |\p^2_{x_i x_j} p|^2 - \frac{10}{3} \iint_{Q_T} \frac{p}{m} |\nabla \cdot(m \nabla p) + mF|^2.}  \label{final_I2} \end{equation}  

So it remains to deal with $\mathcal{I}_3$ to obtain the desired bounds. Observe that
\begin{equation} \mathcal{I}_3 = \underbrace{\iint_{Q_T} (\nabla p \cdot \vec{b}) (\nabla \cdot(m \nabla p))}_{\mathcal{I}_{3,1}} + \underbrace{\iint_{Q_T} mF (\nabla p \cdot \nabla \vec{b})}_{\mathcal{I}_{3,2}}. \label{I3} \end{equation} Now Cauchy-Schwarz and our $L^1(Q_T)$ control of $\nabla p_k$ (Lemma \ref{Pressure_Bounds} and Lemma \ref{L2_Pressure_Gradient}) implies
\begin{equation} \mathcal{I}_{3,2} \geq -C \iint_{Q_T} |\nabla p|  \geq -\tilde{C}. \label{I32} \end{equation} Now we compute from integration by parts that
\[ \mathcal{I}_{3,1} = -\iint_{Q_T} p \nabla \cdot(\vec{b} \left[\nabla \cdot(m \nabla p) \right]) = -\underbrace{\iint_{Q_T} p (\nabla \cdot \vec{b}) [\nabla \cdot(m \nabla p)]}_{\mathcal{K}_1} - \underbrace{\iint_{Q_T} p (\vec{b} \cdot \nabla \left[ \nabla \cdot(m \nabla p) \right])}_{\mathcal{K}_2}. \] We apply Young's Inequality to see that
\begin{equation} |\mathcal{K}_1| \leq \frac{1}{2 ||m||_{\infty}} \iint_{Q_T} p |\nabla \cdot(m \nabla p)|^2 + \frac{||m||_{\infty}}{2} \iint_{Q_T} p|\nabla \cdot \vec{b}|^2 \leq C + \frac{1}{2} \iint_{Q_T} \frac{p}{m} |\nabla \cdot(m \nabla p)|^2. \label{K1} \end{equation} Now we compute to see that
\[ \mathcal{K}_2 = \underbrace{\sum_{i,j=1}^{d} \iint_{Q_T} p b_i\left[ (\p^2_{x_i x_j} m)(\p_{x_j} p) \right]}_{\mathcal{K}_{2,1}} + \underbrace{\sum_{i,j=1}^{d} \iint_{Q_T} p b_i \left[ (\p_{x_j} m)(\p^2_{x_i x_j} p) \right]}_{\mathcal{K}_{2,2}} + \underbrace{\sum_{i,j=1}^{d} \iint_{Q_T} p (mb_i) \left[  \p^3_{x_j x_i x_j} p.  \right]}_{\mathcal{K}_{2,3}} + \]
\[ + \iint_{Q_T} (\vec{b} \cdot \nabla m) \Delta p. \] The last term can be bounded by integration by parts:
\begin{equation} \left| \iint_{Q_T} (\vec{b} \cdot \nabla m) \Delta p \right| = \left| \iint_{Q_T} - \nabla (\vec{b} \cdot \nabla m) \cdot \nabla p \right| \leq C \iint_{Q_T} |\nabla p|  \leq C. \label{K2_Last} \end{equation} Now we bound the other terms. Observe that from Young's Inequality that
\begin{equation} |\mathcal{K}_{2,1}| \leq \frac{1}{2} \sum_{i,j=1}^{d} \iint_{Q_T} p |\p_{x_i} p|^2 + p |b_i|^2|\p^2_{x_i x_j} m|^2 \leq C(T), \label{K21} \end{equation} where we used our $L^2(Q_T)$ bounds on $\nabla p$ (Lemma \eqref{L2_Pressure_Gradient}) and our $L^{\infty}(Q_T)$ bounds on $p$ (Lemma \eqref{Pressure_Bounds}). Then by using Young's Inequality again
\begin{equation} |\mathcal{K}_{2,2}| \leq \iint_{Q_T} \frac{\delta}{24} \sum_{i,j=1}^{d} p |\p^2_{x_i x_j} p|^2 + \frac{12}{\delta} \sum_{i,j=1}^{d}  p|b_i|^2|\p_{x_j} m|^2  \leq  \frac{1}{24} \iint_{Q_T} \sum_{i,j=1}^{d} mp|\p^2_{x_i x_j}p|^2  + C(T).  \label{K22} \end{equation}
Now we deal with $\mathcal{K}_{2,3}$. Let us define $\vec{c}:=m \vec{b} = (c_1,c_2,...,c_d)^T$ and integrate by parts to see
\begin{equation} \mathcal{K}_{2,3} = -\underbrace{\sum_{i,j=1}^{d} \iint_{Q_T} c_i (\p_{x_j} p)(\p^2_{x_i x_j} p)}_{\mathcal{K}_{2,4}} - \underbrace{\sum_{i,j=1}^{d} \iint_{Q_T} p (\p_{x_j}c_i) (\p^2_{x_j x_i} p)}_{\mathcal{K}_{2,5}}. \end{equation} By Young's Inequality we have that
\begin{equation} |\mathcal{K}_{2,5}| \leq \iint_{Q_T} \frac{\delta}{24} \sum_{i,j=1}^{d} p|\p^2_{x_i x_j} p|^2 + \frac{12}{\delta}p |\p_{x_j} c_i|^2 \leq  \frac{1}{24} \sum_{i,j=1}^{d} \iint_{Q_T} mp |\p^2_{x_i x_j} p|^2  + C(T).  \label{K25}  \end{equation} Now we observe that
\[ \mathcal{K}_{2,4} = \frac{1}{2} \iint_{Q_T} \vec{c} \cdot  \nabla(|\nabla p|^2) = \frac{1}{2} \iint_{Q_T} \nabla \cdot( \vec{c}  |\nabla p|^2) -  (\nabla \cdot \vec{c}) |\nabla p|^2  \]
\[ = -\frac{1}{2} \iint_{Q_T} (\nabla \cdot \vec{c})|\nabla p|^2. \] Hence, we have from our $L^2(Q_T)$ control of $\nabla p$ (Lemma \ref{L2_Pressure_Gradient})
\begin{equation} |\mathcal{K}_{2,4}| \leq C(T). \label{K24} \end{equation} So now combining our inequalities on $\mathcal{K}_{1}$ and $\mathcal{K}_{2}$ (see \eqref{K1}, \eqref{K21}, \eqref{K2_Last}, \eqref{K22}, \eqref{K24}, \eqref{K25}) gives
\[ |\mathcal{I}_{3,1}| \leq C(T) + \frac{1}{12} \sum_{i,j=1}^{d} \iint_{Q_T} mp|\p^2_{x_i x_j} p|^2 + \frac{1}{2} \iint_{Q_T} \frac{p}{m}|\nabla \cdot(m \nabla p)|^2.   \] Now again using
\[ |\nabla \cdot(m \nabla p)|^2 \leq 4|\nabla \cdot(m \nabla p) + mF|^2 + 4|mF|^2, \] we deduce that
\begin{equation} |\mathcal{I}_{3,1}| \leq C(T) + \frac{1}{12} \sum_{i,j=1}^{d} \iint_{Q_T} mp|\p^2_{x_i x_j} p|^2 + 2 \iint_{Q_T} \frac{p}{m}|\nabla \cdot(m \nabla p) + mF|^2. \label{I31}  \end{equation} So by using \eqref{I31} and \eqref{I32} in \eqref{I3} we arrive at
\begin{equation} \boxed{\mathcal{I}_3 \geq -C(T) - \frac{1}{12} \sum_{i,j=1}^{d} \iint_{Q_T} mp|\p^2_{x_i x_j} p|^2 - 2 \iint_{Q_T} \frac{p}{m}|\nabla \cdot(m \nabla p) + F|^2.}  \label{Final_I3} \end{equation}

That is we have from \eqref{Final_I1}, \eqref{final_I2}, \eqref{Final_I3}, \eqref{all_term}, and Assumption (\ref{complimentary_assumpution}) that
\[  \frac{1}{12} \sum_{i,j=1}^{d} \iint_{Q_T} mp|\p^2_{x_i x_j} p|^2 + (k-\frac{19}{3}) \iint_{Q_T} \frac{p}{m} |\nabla \cdot(m \nabla p) + F|^2 \leq C \int_{\R^d} |\nabla p(x,0)|^2  + C \leq \tilde{C}.  \]  
 \end{proof}

Now by combining Lemma $\ref{L4_2}$ with Lemma $\ref{L4_Pressure_Bounds1}$ we obtain uniform $L^4(Q_T)$ bounds on $\nabla p$ for $k \geq 7$ due to an approximation argument. 

\begin{corollary} [$L^4(Q_T)$ bounds on $|\nabla p|$]  If Assumptions $\ref{Assum_1}$, $\ref{Assum_2}$, $\ref{hard_constraint_restriciton}$, and $\ref{complimentary_assumpution}$ are met, then for $k \geq 7$ there is a constant $C=C(T)$ such that
\[ \iint_{Q_T} |\nabla p|^4  \leq C. \]
\end{corollary}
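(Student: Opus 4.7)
The proof is a short combination of the two preceding lemmas. Lemma \ref{L4_Pressure_Bounds1} already reduces the $L^4$ bound on $\nabla p_k$ to uniform bounds on the two quantities $\iint_{Q_T} \frac{p_k}{m}|\nabla\cdot(m \nabla p_k)|^2$ and $\sum_{i,j} \iint_{Q_T} m p_k |\p^2_{x_i x_j} p_k|^2$, and Lemma \ref{L4_2} controls precisely these objects, modulo the fact that there the divergence appears in the shifted form $\nabla\cdot(m \nabla p_k)+mF$. The plan is therefore to absorb the shift and plug in.

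For $k \geq 7$ one has $k - 19/3 \geq 2/3 > 0$, so Lemma \ref{L4_2} immediately yields uniform bounds
\[
\sum_{i,j=1}^d \iint_{Q_T} m p_k |\p^2_{x_i x_j} p_k|^2 \leq C, \qquad \iint_{Q_T} \frac{p_k}{m}\,|\nabla\cdot(m \nabla p_k)+mF|^2 \leq C.
\]
To pass from the second bound to a bound on $\iint_{Q_T} \frac{p_k}{m}|\nabla\cdot(m \nabla p_k)|^2$, I would use the elementary inequality $|a|^2 \leq 2|a+b|^2 + 2|b|^2$ with $a=\nabla\cdot(m \nabla p_k)$ and $b=mF$, multiply by $p_k/m$, and integrate. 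The resulting error term $\iint_{Q_T} \frac{p_k}{m}(mF)^2$ is bounded by a constant thanks to the uniform $L^{\infty}$-bound and uniform compact support of $p_k$ from Lemma \ref{Pressure_Bounds}, together with Assumption \ref{Assum_1} (which makes $m$, $F$ bounded and $m\geq \delta>0$).

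Substituting both bounds into the right-hand side of Lemma \ref{L4_Pressure_Bounds1} yields $\iint_{Q_T} |\nabla p_k|^4 \leq C$, as claimed. There is no real analytic obstacle; the only minor technical point (hinted at by "approximation argument" in the excerpt) is that the estimates in Lemmas \ref{L4_Pressure_Bounds1} and \ref{L4_2} are first established for the smooth PME approximants from Section \ref{weak_mPME} and then passed to the weak limit, with $p_k |\p^2_{x_i x_j} p_k|^2$ interpreted classically on $\{p_k>0\}$ and as zero on $\{p_k=0\}$ per the remark following Lemma \ref{L4_2}. The hard work is already in Lemma \ref{L4_2}; this corollary is essentially book-keeping.
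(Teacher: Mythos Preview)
Your proposal is correct and matches the paper's approach exactly: the paper states only that combining Lemma~\ref{L4_2} with Lemma~\ref{L4_Pressure_Bounds1} (via an approximation argument) gives the uniform $L^4(Q_T)$ bound for $k\geq 7$, which is precisely the book-keeping you carry out. Your use of $|a|^2\leq 2|a+b|^2+2|b|^2$ to remove the $mF$ shift is the natural step implicit in the paper's one-line proof.
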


Now we have enough compactness for strong $L^2(Q_T)$ convergence of $|\nabla p_k|$. 
\begin{lemma} Assume that Assumptions $\ref{Assum_1}$, $\ref{Assum_2}$,  $\ref{hard_constraint_restriciton}$, and $\ref{complimentary_assumpution}$ are met, then $\nabla p_k \rightarrow \nabla p_{\infty}$ in $L^r(Q_{\tau,T})$ for any $\tau>0$ and $r \in [1,4)$. \label{Aubin-Lions}
\end{lemma}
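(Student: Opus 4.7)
The plan is to upgrade the weak $L^2$ convergence of $\nabla p_k$ to strong $L^2$ convergence via a norm-convergence argument built from the pressure equation \eqref{PME_pressure}, and then interpolate with the uniform $L^4$ bound (just established) to reach $r \in [1,4)$. Lemmas \ref{Pressure_Bounds}, \ref{L2_Pressure_Gradient}, and the preceding corollary give $\nabla p_k$ bounded in $L^2 \cap L^4(Q_T)$ with uniform compact spatial support; since $p_k \to p_\infty$ in $L^1(Q_T)$, the (unique) weak $L^2$ limit of $\nabla p_k$ must be $\nabla p_\infty$. Expanding
\[
\iint_{Q_{\tau,T}} m|\nabla p_k - \nabla p_\infty|^2 = \iint m|\nabla p_k|^2 - 2\iint m \nabla p_k \cdot \nabla p_\infty + \iint m|\nabla p_\infty|^2,
\]
using weak $L^2$ convergence against $m \nabla p_\infty \in L^2$, and recalling $m \geq \delta$, strong $L^2$ convergence of $\nabla p_k$ reduces to proving the norm convergence $\iint_{Q_{\tau,T}} m|\nabla p_k|^2 \to \iint_{Q_{\tau,T}} m|\nabla p_\infty|^2$.

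For the $k$-side, I would multiply \eqref{PME_pressure} by $m$ and integrate over $Q_{\tau,T}$. Compact spatial support of $p_k$ lets us integrate by parts so that $(k-1)\iint m p_k w_k = (k-1)\iint p_k \nabla \cdot(m \nabla p_k) = -(k-1)\iint m|\nabla p_k|^2$, which after rearrangement yields
\[
(k-2)\iint_{Q_{\tau,T}} m|\nabla p_k|^2 = -\iint m\partial_t p_k + \iint m \nabla p_k \cdot \vec{b} + (k-1)\iint m p_k F.
\]
The first two integrals on the right are $O(1)$ uniformly in $k$ (using $L^\infty$ and compact-support control of $p_k$ from Lemma \ref{Pressure_Bounds} for the time-IBP boundary terms, and Lemma \ref{L2_Pressure_Gradient} for the drift term). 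Dividing by $k-2$ and passing to the limit via $p_k \to p_\infty$ in $L^1$ then gives $\iint_{Q_{\tau,T}} m|\nabla p_k|^2 \to \iint_{Q_{\tau,T}} m p_\infty F$.

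For the limit side, I would test the weak density equation for $(\varrho_\infty, p_\infty)$ (from Theorem \ref{Compactness-Theorem}) against $p_\infty$ itself, justified by a Steklov-type time regularization since $p_\infty \in L^2(0,T;H^1(\R^d)) \cap L^\infty(Q_T)$ with compact support. The saturation relation $p_\infty(m - \varrho_\infty) = 0$ yields the a.e.\ identities $p_\infty \varrho_\infty = m p_\infty$, $\varrho_\infty \nabla p_\infty = m \nabla p_\infty$, and $(m - \varrho_\infty)\partial_t p_\infty = 0$ (the latter two because $\nabla p_\infty = 0$ and $\partial_t p_\infty = 0$ a.e.\ on $\{p_\infty = 0\}$, while $\varrho_\infty = m$ on $\{p_\infty > 0\}$). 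After integration by parts in space, the time-derivative term $\iint p_\infty \partial_t \varrho_\infty$ collapses, via IBP in time together with these identities, to $\iint (\partial_t m) p_\infty$ with cancelling time-boundary contributions, producing
\[
\iint_{Q_{\tau,T}} m|\nabla p_\infty|^2 = \iint_{Q_{\tau,T}} p_\infty \bigl[-\partial_t m + \nabla \cdot(m \vec{b}) + mf\bigr] = \iint_{Q_{\tau,T}} m p_\infty F,
\]
which matches the $k$-side limit. Combined with Vitali's theorem and the uniform $L^4$ bound on $\nabla p_k$, the resulting strong $L^2$ convergence upgrades to strong $L^r(Q_{\tau,T})$ convergence for every $r \in [1, 4)$.

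The main technical hurdle will be the test-function step: since $p_\infty$ has no classical time regularity, integration by parts in time against $p_\infty$ in the density equation must be justified through a Steklov-type temporal regularization, and the formal a.e.\ identity $(m - \varrho_\infty)\partial_t p_\infty = 0$ must be promoted to a distributional statement robust enough for the calculation above to go through.
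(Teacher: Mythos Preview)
Your $k$-side computation is correct: integrating $m$ times \eqref{PME_pressure} over $Q_{\tau,T}$ and using $\iint p_k\nabla\cdot(m\nabla p_k)=-\iint m|\nabla p_k|^2$ does yield $\iint_{Q_{\tau,T}} m|\nabla p_k|^2 \to \iint_{Q_{\tau,T}} mp_\infty F$. The gap is entirely on the limit side.

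The identity you need there, $\iint_{Q_{\tau,T}} m|\nabla p_\infty|^2 = \iint_{Q_{\tau,T}} mp_\infty F$, is precisely the integrated complementary relation (take $\varphi\equiv 1$ on the support in Theorem~\ref{Complimentary-Theorem}). The paper's logical order is the reverse of yours: Lemma~\ref{Aubin-Lions} is used to obtain strong $L^2$ convergence of $\nabla p_k$, and \emph{then} the complementary relation is deduced. Your plan to derive the limit-side identity directly, by testing the limiting density equation with $p_\infty$, runs into the obstacle you yourself flag, and it is not a mere technicality. The time derivative $\partial_t p_\infty$ is at best a Radon measure on $Q_{\tau,T}$ (this is all the $L^1$ bound on $\partial_t p_k$ yields), and it can carry singular mass on the space--time free boundary $\partial\{p_\infty>0\}$ (the paper even remarks that $p_\infty$ generically develops jump discontinuities in time). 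At those points $m-\varrho_\infty$ is discontinuous as well and need not vanish, so the product $(m-\varrho_\infty)\partial_t p_\infty$ is genuinely ill-defined; Steklov averaging does not resolve this, because the limit of the averaged time term depends on which representative of $\varrho_\infty$ one pairs against the singular part of $\partial_t p_\infty$. This is exactly why the literature proves the complementary relation via strong gradient convergence rather than the other way around.

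The paper defers to Lemma~3.5 of \cite{david2021incompressible}, whose mechanism is different from yours and crucially uses the second-order estimate in Lemma~\ref{L4_2}, which you do not invoke. From $\iint mp_k|\partial^2_{x_ix_j}p_k|^2\le C$ together with the $L^4$ gradient bound one obtains a uniform $H^2$-type control on a power of $p_k$ (for instance $D^2(p_k^2)=2\nabla p_k\otimes\nabla p_k+2p_kD^2p_k$ is bounded in $L^2(Q_{\tau,T})$), and combined with the $L^1(Q_{\tau,T})$ bound on $\partial_t p_k$ this feeds into an Aubin--Lions compactness argument that produces strong convergence of $\nabla p_k$ directly, without ever needing the limit-side identity you are trying to establish.
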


\begin{proof} This proof is essentially identical to the proof of Lemma 3.5 in \cite{david2021incompressible}, so we omit the proof.
\end{proof}

Now we use \eqref{PME_pressure} to obtain $L^2(Q_T)$ convergence of $\nabla p_k$.

\begin{theorem}Assume that Assumptions $\ref{Assum_1}$, $\ref{Assum_2}$,  $\ref{hard_constraint_restriciton}$, and $\ref{complimentary_assumpution}$ are met then $\nabla p_k \rightarrow \nabla p_{\infty}$ in $L^2(Q_T)$.
\end{theorem}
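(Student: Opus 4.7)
The plan is to combine the local-in-time strong convergence from Lemma \ref{Aubin-Lions} with the uniform $L^4(Q_T)$ bound on $\nabla p_k$ to control the contribution from the initial boundary layer $(0,\tau)$ and thereby upgrade strong $L^r(Q_{\tau,T})$ convergence to strong $L^2(Q_T)$ convergence.

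First I would identify the weak $L^4(Q_T)$ limit of $\nabla p_k$. By the uniform $L^4(Q_T)$ bound, some subsequence of $\nabla p_k$ converges weakly in $L^4(Q_T)$ to a limit $\xi$. Lemma \ref{Aubin-Lions} forces $\xi = \nabla p_\infty$ almost everywhere in $Q_{\tau,T}$ for every $\tau > 0$, hence on all of $Q_T$, and lower semicontinuity of the $L^4$ norm gives $\|\nabla p_\infty\|_{L^4(Q_T)} \leq C$. Since the weak limit is uniquely identified, no passage to subsequences is ultimately needed.

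Next I would split $Q_T = Q_\tau \cup Q_{\tau,T}$ for a small parameter $\tau > 0$. On $Q_{\tau,T}$, the case $r=2$ of Lemma \ref{Aubin-Lions} immediately yields $\|\nabla p_k - \nabla p_\infty\|_{L^2(Q_{\tau,T})} \to 0$ as $k \to \infty$. On $Q_\tau$, Lemma \ref{Pressure_Bounds} tells us that the spatial supports of $p_k$ and $p_\infty$ are contained in a fixed compact set $\Omega$, so the effective support of $\nabla p_k - \nabla p_\infty$ in $Q_\tau$ has measure at most $|\Omega|\tau$. Applying the Cauchy--Schwarz interpolation
\begin{equation*}
\iint_{Q_\tau} |\nabla p_k - \nabla p_\infty|^2 \,dx\,dt \leq \left( \iint_{Q_\tau \cap (\Omega\times(0,\tau))} |\nabla p_k - \nabla p_\infty|^4 \right)^{1/2} (|\Omega|\tau)^{1/2}
\end{equation*}
together with the uniform $L^4(Q_T)$ bound on both $\nabla p_k$ and $\nabla p_\infty$ gives $\|\nabla p_k - \nabla p_\infty\|_{L^2(Q_\tau)}^2 \leq C \tau^{1/2}$ uniformly in $k$.

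Combining these two estimates yields $\limsup_{k \to \infty} \|\nabla p_k - \nabla p_\infty\|_{L^2(Q_T)}^2 \leq C \tau^{1/2}$ for every $\tau > 0$, and sending $\tau \downarrow 0$ concludes the proof. The genuine obstacle has already been resolved in the preceding lemmas, namely the uniform $L^4(Q_T)$ gradient bound derived via the delicate testing against $\nabla \cdot(m\nabla p_k) + mF$ and the resulting control in Lemma \ref{L4_2}; once that estimate is in hand, only this short interpolation across the singular initial time is required.
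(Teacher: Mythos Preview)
Your argument is correct and slightly more direct than the paper's. The paper also splits $Q_T = Q_\tau \cup Q_{\tau,T}$ and invokes Lemma \ref{Aubin-Lions} on $Q_{\tau,T}$, but on the initial layer $Q_\tau$ it does \emph{not} interpolate against the $L^4(Q_T)$ bound. Instead it integrates the pressure equation \eqref{PME_pressure} over $Q_\tau$ (as in the proof of Lemma \ref{L2_Pressure_Gradient}) to obtain
\[
\iint_{Q_\tau} |\nabla p_k|^2 \leq \frac{2C}{k-5} + C\,|\Om_T \cap Q_\tau|,
\]
a bound depending on both $k$ and $\tau$, and then chooses first $\tau$ small and then $k$ large. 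Your H\"older interpolation against the uniform $L^4$ bound produces the cleaner uniform estimate $\|\nabla p_k - \nabla p_\infty\|_{L^2(Q_\tau)}^2 \leq C\tau^{1/2}$ and avoids returning to the PDE; the paper's approach, on the other hand, would still work under the weaker hypothesis of a uniform $L^2(Q_T)$ (rather than $L^4$) gradient bound together with an explicit smallness mechanism on $Q_\tau$.
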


\begin{proof} By Lemma $\ref{Aubin-Lions}$ we know that $\nabla p_k \rightarrow \nabla p_{\infty}$ in $L^2(Q_{\tau,T})$ for any $\tau>0$. So it remains to control $||\nabla p_k||_{L^2(Q_{\tau})}$ for small $\tau>0$. Integrating \eqref{PME_pressure} over $Q_{\tau}$ gives
\[ (k-2) \iint_{Q_\tau}  |\nabla p_k|^2 = \int_{\R^d} p_k(x,0)-p_k(x,\tau) + \iint_{Q_\tau} \nabla p_k \cdot (\vec{b} + (k-1) p_k \nabla \lambda) + (k-1)p_k F . \]  By applying Young's Inequality with $\e>0$ we arrive at
\[ (k-2) \iint_{Q_{\tau}} |\nabla p_k|^2 \leq \int_{\R^d} p_k(x,0)  + \iint_{Q_{\tau}} |\nabla p_k|^2 +  (k-1)\iint_{Q_{\tau}} \e ||p_k||^2_{L^{\infty}(Q_T)}  |\nabla p_k|^2 + C| (\Omega \times [0,T]) \cap Q_{\tau}|  +   \]
\[ + C(\e)(k-1) \iint_{Q_{\tau}} p_k, \] where $\Om \times [0,T] =: \Om_T$ is a compact set such that $\text{supp}(p_k) \subset \Om_T$ for all $k>1$ and $|A|$ is the Lebesgue measure of $A \subset \R^{d+1}$. Choose $\e>0$ so small such that $\e ||p_k||_{L^{\infty}(Q_T)} ||\nabla \lambda||_{\infty} \leq \frac{1}{2}$ for all $k$, we arrive at
\[  \iint_{Q_{\tau}} |\nabla p_k|^2 \leq \frac{2}{k-5}\int_{\R^d} p_k(x,0)  +  C| \Om_T \cap Q_{\tau}|. \] So for $k \geq 6$, we have from Lemma \ref{Pressure_Bounds} that
\begin{equation} \iint_{Q_{\tau}} |\nabla p_k|^2 \leq \frac{2C}{k-5} + C | \Om_T \cap Q_{\tau}|,  \label{small_L2_press} \end{equation} where $|A|$ is the Lebesgue measure of $A \subset \R^{d+1}$. Note that this constant $C$ is independent of both $\tau$ and $k$. \\

Now if $\e'>0$ is given, there exists a $K'$ sufficiently large with $\tau$ sufficiently small and $\nabla p_{\infty} \in L^2(Q_T)$ such that
\[ \iint_{Q_{\tau}} |\nabla p_k|^2 + |\nabla p_{\infty}|^2 \leq \e'/2 \] for all $k \geq K'$  thanks to \eqref{small_L2_press}. Now by making $K'$ even larger if necessary, Lemma \ref{Aubin-Lions} shows us that $\iint_{Q_{\tau,T}} |\nabla p_k - \nabla p_{\infty}|^2 \leq \e'/2$ for all $k \geq K'$. So now from the triangle inequality, we obtain
\[ \iint_{Q_T} |\nabla p_k - \nabla p_{\infty}|^2  \leq \e' \] for $k \geq K'$. \end{proof}

Now we have enough compactness to show that $p_{\infty}$ is a weak solution to the pressure equation in \eqref{Hele-Shaw}. Indeed, this follows from similar arguments as in \cite{perthame2014hele} Lemma 2.5 that showed when $m \equiv 1$ the complimentary relationship is equivalent to $L^2(Q_T)$ convergence of $\nabla p_k$. Or we could proceed as in \cite{david2021free} and take the limit of the weak form of the pressure equation \eqref{PME_pressure} to conclude the proof of Theorem \ref{Complimentary-Theorem}.

\section{Velocity Law} 

\subsection{Velocity Law} \label{Velocity_Law}

Now we aim to characterize the movement of $\{p_{\infty}(t)>0\}$. We will show the velocity law \eqref{velocity_law} in the viscosity sense. Our arguments and definitions on the velocity law and classical barriers closely follow the ones in \cite{guillen2022hele}.

\begin{definition} [Classical Super-Solutions] \label{super-solution} Let $B$ be any space ball in $\R^d$ and $0 \leq t_1 < t_2 < \infty$ and $\phi \in C_c(\overline{B} \times [t_1,t_2])$ be non-negative. We also assume that $\phi \in  C^2(\{\overline{\phi>0\}})$ and that $\p \{\phi>0\}$ is $C^2$ in space and $C^1$ in time.  Then we choose a $\rho^E \in C^{1,1}(\{\phi=0\})$ such that
\[ \p_t \rho^E \geq  \nabla \cdot(\rho^E \vec{b}) + f\rho^E \text{ in } \{\phi=0\} \] with $\rho^E \leq m$  on $\{\phi=0\}$. And we also assume that
\begin{equation}
\begin{cases}
-\nabla \cdot(m \nabla \phi) \geq \nabla \cdot(m \vec{b})-m_t + mf \text{ in } \{\phi>0\} \\
(m-\rho^E)V_{\phi} \geq  \left[ - m \nabla \phi - \vec{b}(m-\rho^E) \right] \cdot \vec{\nu} \text{ on } \p \{\phi>0\}
\end{cases}
\end{equation}
where $V_{\phi}$ is the normal velocity of $\p \{\phi(t)>0\}$ and $\vec{\nu}$ is the outward normal of $\p \{\phi(t)>0\}$. Then we define
\[ \rho := m \chi_{ \{\phi>0\} } +\rho^E \chi_{ \{\phi=0\} }. \] Then the pair $(\rho,\phi)$ is called a classical super-solution.
\end{definition}

\begin{lemma} If the pair $(\rho,\phi)$ is a classical super-solution, then they are also a weak super-solution of the limiting density equation $(\ref{Hele-Shaw})$ in $B \times (t_1,t_2)$ with initial data $\rho_1 := \rho(t=t_1)$. In particular, for all $\varphi \in C_{c}^{\infty}(B \times [t_1,t_2))$ that is non-negative we have 
\[ \iint_{B \times [t_1,t_2] } - \varphi_t \rho + \nabla \varphi \cdot(m \nabla \phi) \geq \iint_{B \times [t_1,t_2]} \varphi f \rho - \nabla \varphi \cdot (\rho \vec{b}) + \int_{B} \varphi(t=t_1) \rho_1 \]
\end{lemma}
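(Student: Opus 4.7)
My plan is to decompose $B\times[t_1,t_2]$ into the two open spacetime regions $U^+:=\{\phi>0\}$ and $U^0:=\{\phi=0\}^{\circ}$, apply the spacetime divergence theorem on each, and combine the bulk, initial time, and free boundary contributions using the three super-solution conditions in Definition \ref{super-solution}. The free boundary $\Sigma:=\p\{\phi>0\}\cap (B\times(t_1,t_2))$ is a $C^1$ hypersurface in spacetime by hypothesis, so the divergence theorem is legitimate.

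On $U^+$ we have $\rho=m$, and I introduce the spacetime vector field $W:=(m\nabla\phi+m\vec b,\,-m)$, whose integrand satisfies $\nabla_{x,t}\varphi\cdot W=\nabla\varphi\cdot(m\nabla\phi+\rho\vec b)-\varphi_t\rho$. Its spacetime divergence is $\nabla\cdot(m\nabla\phi+m\vec b)-m_t$, which by the first super-solution inequality is $\leq -mf=-f\rho$. On $U^0$ we have $\rho=\rho^E$ and $\nabla\phi\equiv0$, so I use $W':=(\rho^E\vec b,\,-\rho^E)$; the second super-solution condition $\p_t\rho^E\geq\nabla\cdot(\rho^E\vec b)+f\rho^E$ gives $\nabla_{x,t}\cdot W'\leq -f\rho^E=-f\rho$. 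In both cases multiplying by $\varphi\geq 0$ and applying the divergence theorem yields
\[
\iint_{U^\pm}\nabla_{x,t}\varphi\cdot(\text{field})\;=\;-\iint_{U^\pm}\varphi\,\nabla_{x,t}\!\cdot\!(\text{field})\;+\;\int_{\p U^\pm}\varphi\,(\text{field})\cdot\vec n\;\geq\;\iint_{U^\pm}\varphi f\rho\;+\;\int_{\p U^\pm}\varphi\,(\text{field})\cdot\vec n.
\]

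Next I identify the boundary contributions. On $\{t=t_1\}$ the spacetime outward normal is $(0,-1)$, giving $W\cdot\vec n=m=\rho$ on $U^+$ and $W'\cdot\vec n=\rho^E=\rho$ on $U^0$; these assemble to $\int_B\varphi(t_1)\rho_1\,dx$. On $\{t=t_2\}$ the contribution vanishes since $\varphi\in C_c^\infty(B\times[t_1,t_2))$, and on $\p B$ it vanishes by compact support of $\varphi$ in $B$. On $\Sigma$, the outward spacetime normals from $U^+$ and $U^0$ are opposite, and with $\vec\nu$ denoting the spatial outward normal of $\{\phi(t)>0\}$ and $V_\phi$ the normal velocity, the spacetime normal from $U^+$ is $(\vec\nu,-V_\phi)/\sqrt{1+V_\phi^2}$ with area element $\sqrt{1+V_\phi^2}\,d\sigma\,dt$. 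The two contributions on $\Sigma$ combine to
\[
\int_{\Sigma}\varphi\Big[\,m\nabla\phi\cdot\vec\nu+(m-\rho^E)\vec b\cdot\vec\nu+(m-\rho^E)V_\phi\,\Big]\,d\sigma\,dt,
\]
and the third super-solution condition $(m-\rho^E)V_\phi\geq[-m\nabla\phi-\vec b(m-\rho^E)]\cdot\vec\nu$ says exactly that the bracketed quantity is non-negative. Since $\varphi\geq 0$, this free boundary contribution is $\geq 0$, and adding everything together gives the required inequality.

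The main obstacle is bookkeeping of orientation and signs on $\Sigma$: one must verify that the coefficient of $V_\phi$ that emerges from combining $W\cdot\vec n_+ dS$ on $U^+$ and $W'\cdot(-\vec n_+)\,dS$ on $U^0$ is precisely $(m-\rho^E)$, so that it pairs cleanly with the third super-solution inequality. A small auxiliary point is that $\nabla\phi$ is only smooth on $\overline{\{\phi>0\}}$, but the traces on $\Sigma$ exist because $\phi\in C^2(\overline{\{\phi>0\}})$ by assumption, and $\rho^E\in C^{1,1}(\{\phi=0\})$ provides the analogous trace from the $U^0$ side, so all boundary integrals are meaningful.
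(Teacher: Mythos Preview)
Your argument is correct and is precisely the standard route: integrate by parts in spacetime separately on $\{\phi>0\}$ and on the interior of $\{\phi=0\}$, use the two bulk super-solution inequalities to get the $f\rho$ term with the right sign, and then verify that the combined flux across the free boundary $\Sigma$ is nonnegative thanks to the velocity condition. The paper itself omits the proof and simply cites Proposition~7.1 of \cite{guillen2022hele}, whose argument is essentially the one you wrote; your sign and orientation bookkeeping on $\Sigma$ checks out (the combined surface density is $m\nabla\phi\cdot\vec\nu+(m-\rho^E)\vec b\cdot\vec\nu+(m-\rho^E)V_\phi\geq 0$), so there is nothing to add.
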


\begin{proof} The proof is essentially identical to the proof of Proposition 7.1 in \cite{guillen2022hele}, so we omit the details.
\end{proof}

And analogously, we have the following definition for classical sub-solutions.

\begin{definition}  [Classical Sub-Solutions] \label{sub-solution} Let $B$ be any space ball and $\R^d$ and $0 \leq t_1 < t_2 < \infty$ and $\phi \in C_c(\overline{B} \times [t_1,t_2])$ be non-negative. We also assume that $\phi \in  C^2(\{\overline{\phi>0\}})$ and that $\p \{\phi>0\}$ is $C^2$ in space and $C^1$ in time. Then we choose a $\rho^E \in C^{1,1}(\{\phi=0\})$ such that
\[ \p_t \rho^E \leq  \nabla \cdot(\rho^E \vec{b}) + f\rho^E \text{ in } \{\phi=0\} \] and $\rho^E \leq m$ on $\{\phi=0\}$. And we also assume that
\begin{equation}
\begin{cases}
-\nabla \cdot(m \nabla \phi) \leq \nabla \cdot(m \vec{b})-m_t + mf \text{ in } \{\phi>0\} \\
(m-\rho^E)V_{\phi} \leq  \left[ - m \nabla \phi - \vec{b}(m-\rho^E) \right] \cdot \vec{\nu} \text{ on } \p \{\phi>0\}
\end{cases}
\end{equation}
where $V_{\phi}$ is the normal velocity of $\p \{\phi(t)>0\}$ and $\vec{\nu}$ is the outward normal of $\p \{\phi(t)>0\}$.  Now if we define
\[ \rho := m \chi_{ \{\phi>0\} } +\rho^E \chi_{ \{\phi=0\} }, \] then the pair $(\varrho,\phi)$ is called a classical sub-solution.   \end{definition}

Similarly we have the following relation between classical sub-solutions and weak sub-solutions of the limiting density equation $(\ref{Hele-Shaw})$.

\begin{lemma}
If the pair $(\rho,\phi)$ is a classical sub-solution, then they are a weak sub-solution of the limiting equation $(\ref{Hele-Shaw})$ in $B \times (t_1,t_2)$ with initial data $\rho_1 := \rho(t=t_1)$. In particular, for all $\varphi \in C_{c}^{\infty}(B \times [t_1,t_2))$ that is non-negative we have 
\[ \iint_{B \times [t_1,t_2] } - \varphi_t \rho + \nabla \varphi \cdot(m \nabla \phi) \leq \iint_{B \times [t_1,t_2]} \varphi f \rho - \nabla \varphi \cdot (\rho \vec{b}) + \int_{B} \varphi(t=t_1) \rho_1 \]
\end{lemma}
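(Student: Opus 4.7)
The plan is to mirror the argument for the super-solution case (which the preceding lemma attributes to Proposition 7.1 of \cite{guillen2022hele}), with all inequalities reversed. The structural idea is to split the test integral along the free boundary $\partial\{\phi>0\}$, do integration by parts on each side, and then combine the resulting boundary terms via the velocity-law inequality that appears in Definition \ref{sub-solution}. Since $\rho$ is discontinuous across $\partial\{\phi>0\}$ (jumping between $m$ and $\rho^E$), each integration by parts will produce a surface term against the outward normal $\vec{\nu}$ of $\{\phi(t)>0\}$, and the contributions from $\partial_t$ will produce a surface term against $V_\phi$ (via a Reynolds transport / coarea computation on the moving domain $\{\phi(t)>0\}$).

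Concretely, I would fix a non-negative $\varphi \in C_c^\infty(B \times [t_1,t_2))$ and write
\[ \iint_{B\times[t_1,t_2]} -\varphi_t \rho \;=\; \iint_{\{\phi>0\}} -\varphi_t m + \iint_{\{\phi=0\}} -\varphi_t \rho^E. \]
On $\{\phi>0\}$, integration by parts in time yields $\iint \varphi \,\partial_t m + \int_B \varphi(t_1) m\chi_{\{\phi(t_1)>0\}}$ plus a boundary contribution $\iint_{\partial\{\phi>0\}} \varphi\, m\, V_\phi\, d\mathcal H^{d-1}dt$ with the sign dictated by $\vec\nu$ being outward. On $\{\phi=0\}$, I would instead use the tested version of the differential inequality $\partial_t \rho^E \leq \nabla\cdot(\rho^E\vec b) + f\rho^E$, integrating by parts both in time (producing the corresponding initial term and the symmetric boundary term $-\iint_{\partial\{\phi>0\}} \varphi\,\rho^E V_\phi$, with the sign flipped because $\vec\nu$ points \emph{into} $\{\phi=0\}$) and in space against $\vec b$ (producing $\iint \nabla\varphi\cdot(\rho^E\vec b)$ plus a boundary term $\iint_{\partial\{\phi>0\}} \varphi\,\rho^E\vec b\cdot\vec\nu$). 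For the spatial term $\iint \nabla\varphi \cdot (m\nabla\phi)$, I note first that $\nabla\phi = 0$ a.e.\ on $\{\phi=0\}$, so only $\{\phi>0\}$ contributes; integration by parts there gives $-\iint_{\{\phi>0\}} \varphi\,\nabla\cdot(m\nabla\phi)$ plus the boundary term $\iint_{\partial\{\phi>0\}} \varphi\, m\,\nabla\phi\cdot\vec\nu$.

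Assembling these pieces, the interior terms satisfy the desired inequality thanks to the two hypotheses from Definition \ref{sub-solution}: namely $-\nabla\cdot(m\nabla\phi) \leq \nabla\cdot(m\vec b) - m_t + mf$ on $\{\phi>0\}$, which packages the $m=\rho$ region, and $\partial_t \rho^E \leq \nabla\cdot(\rho^E\vec b) + f\rho^E$ on $\{\phi=0\}$. The remaining boundary contributions collect into exactly
\[ \iint_{\partial\{\phi>0\}} \varphi\,\Bigl\{ (m-\rho^E) V_\phi - \bigl[-m\nabla\phi - \vec b(m-\rho^E)\bigr]\cdot\vec\nu \Bigr\}\,d\mathcal H^{d-1}dt, \]
and the sub-solution boundary condition $(m-\rho^E)V_\phi \leq [-m\nabla\phi - \vec b(m-\rho^E)]\cdot\vec\nu$ makes this integrand non-positive, which is exactly the right sign to yield the $\leq$ in the weak sub-solution formulation.

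The only substantive obstacle is bookkeeping: checking that the orientation of $\vec\nu$ (outward from $\{\phi>0\}$) is used consistently on both sides of the free boundary, and that the sign of $V_\phi$ in the transport identity matches the chosen normal. The regularity assumed in Definition \ref{sub-solution} ($\phi\in C^2$ where it is positive, $\partial\{\phi>0\}$ is $C^2$ in space and $C^1$ in time, $\rho^E\in C^{1,1}$ where $\phi$ vanishes) is exactly what is needed to justify each integration by parts and the coarea/transport formula rigorously, so no approximation is required beyond standard trace theory. Since this is formally the reversal of the super-solution argument already reduced to \cite{guillen2022hele}, I would simply remark that the same proof applies with inequalities reversed.
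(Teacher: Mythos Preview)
Your proposal is correct and matches the paper's approach: the paper gives no proof for this lemma at all, treating it as the obvious mirror of the super-solution case (which itself is referred to Proposition~7.1 of \cite{guillen2022hele}). Your outline is exactly that argument with inequalities reversed, and your final remark that ``the same proof applies with inequalities reversed'' is precisely what the paper implicitly does.
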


Recall that the limiting pair $(\varrho_{\infty},p_{\infty})$ is a weak solution to $(\ref{Hele-Shaw})$ in the sub-domain $B \times (t_1,t_2)$ with the initial density condition $\varrho_{\infty}(t=t_1)$ and pressure boundary conditions $p_{\infty}$ on $\p B \times [t_1,t_2]$. So we deduce from the weak density comparison principle Lemma $\ref{comparison}$ for the limiting density equation that we have
\begin{proposition} Let the pair $(\rho,\phi)$ be a classical super solution in $B \times (t_1,t_2)$, then if
\begin{enumerate}
    \item $\varrho_{\infty}(t=t_1) \leq \rho(t=t_1)$ \text{ on } B 
    \item $p_{\infty} \leq \phi$ \text{ on } $\p B \times [t_1,t_2]$ ,
\end{enumerate} then we have that $\varrho_{\infty} \leq \rho$ on $B \times [t_1,t_2].$
\end{proposition}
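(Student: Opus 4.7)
The plan is to reduce this to a weak comparison principle and then adapt the Hilbert duality argument used to prove Theorem~\ref{uniqueness}/Theorem~\ref{comparison}. First I would invoke the lemma immediately preceding this proposition to replace the classical super-solution $(\rho,\phi)$ by a weak super-solution of the limiting density equation in $B\times(t_1,t_2)$, namely
\[
\iint_{B\times(t_1,t_2)} -\varphi_t\rho + \nabla\varphi\cdot(m\nabla\phi) \geq \iint_{B\times(t_1,t_2)} \varphi f\rho - \nabla\varphi\cdot(\rho\vec{b}) + \int_B \varphi(t=t_1)\,\rho(t=t_1),
\]
for every non-negative $\varphi\in C_c^\infty(B\times[t_1,t_2))$. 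Simultaneously, $(\varrho_\infty,p_\infty)$ is a weak solution on $B\times(t_1,t_2)$ with initial datum $\varrho_\infty(t=t_1)$ and lateral pressure data $p_\infty|_{\partial B\times[t_1,t_2]}$, and by hypothesis satisfies $p_\infty\leq\phi$ on that lateral boundary and $\varrho_\infty(t_1)\leq\rho(t_1)$.

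Next I would subtract the two weak formulations, testing against non-negative $\varphi\in C_c^\infty(B\times[t_1,t_2))$ vanishing on $\partial B$, to obtain the key differential inequality
\[
\iint_{B\times(t_1,t_2)} \big[(\varrho_\infty-\rho)+(p_\infty-\phi)\big]\left[A\,\partial_t\varphi + B\,\nabla\cdot(m\nabla\varphi) - A(\nabla\varphi\cdot\vec{b}-\varphi f)\right] \leq 0,
\]
with $A,B$ defined exactly as in \eqref{AB} but with $(\varrho_1,p_1)=(\varrho_\infty,p_\infty)$ and $(\varrho_2,p_2)=(\rho,\phi)$. The crucial algebraic point — which I expect to be the main subtlety — is that one still has $A,B\in[0,1]$: indeed, because both pairs satisfy the complementarity relation $p(m-\varrho)=0$ together with $\varrho\leq m$ (the super-solution because $\rho=m$ on $\{\phi>0\}$ and $\rho^E\leq m$ elsewhere), whenever $\varrho_\infty-\rho>0$ we must be on $\{\phi=0\}$ so $p_\infty-\phi=p_\infty\geq 0$, and symmetrically for pressures. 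Hence numerator and denominator in \eqref{AB} have the same sign, giving $A,B\geq 0$. Boundary contributions on $\partial B\times[t_1,t_2]$ from integration by parts in the duality step will carry sign $-B\,p_\infty\,m\,\partial_\nu\varphi + B\,\phi\,m\,\partial_\nu\varphi = B\,m(\phi-p_\infty)\partial_\nu\varphi$; choosing $\varphi$ non-negative and vanishing on $\partial B$ with $\partial_\nu\varphi\leq 0$, combined with $p_\infty\leq\phi$ on the lateral boundary and $B\geq 0$, makes these boundary terms have the correct sign so that they can be dropped in the inequality.

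I would then run the approximation and duality scheme of Section~\ref{uniqueness_section} verbatim: given an arbitrary non-negative $\psi\in C_c^\infty(B\times(t_1,t_2))$, approximate $A$ and $B$ by strictly positive smooth $A_n,B_n$, solve the regularized dual backward parabolic problem \eqref{reg_dual} on $B\times(t_1,t_2)$ with zero terminal condition at $t_2$ and homogeneous Dirichlet data on $\partial B$, and obtain the same uniform bounds on $\varphi_n$, $\nabla\varphi_n$, and $(B_n/A_n)^{1/2}\nabla\cdot(m\nabla\varphi_n)$ as in Lemma~\ref{dual_phi_ineq}; these bounds go through identically since the domain $B$ is bounded and the only structural ingredient used is uniform parabolicity of the regularized operator. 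Plugging $\varphi=\varphi_n$ into the inequality above, splitting the error as in \eqref{In_Eqn}, and passing $n\to\infty$ yields
\[
\iint_{B\times(t_1,t_2)}(\varrho_\infty-\rho)\,\psi \leq 0 \quad\text{for every non-negative } \psi\in C_c^\infty(B\times(t_1,t_2)),
\]
which is exactly $\varrho_\infty\leq\rho$ almost everywhere on $B\times[t_1,t_2]$, as desired. The hardest step, as noted, is the bookkeeping that makes $A,B\geq 0$ and ensures the lateral boundary contribution has the favorable sign; everything else is a direct rerun of the uniqueness argument.
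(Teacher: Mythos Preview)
Your approach is exactly the paper's: it simply invokes the preceding lemma (classical super-solution $\Rightarrow$ weak super-solution) and then the bounded-domain comparison principle, Theorem~\ref{comparison}, whose proof is precisely the Hilbert duality you unpack. Your verification that $A,B\in[0,1]$ in the sub/super-solution setting is the right extra ingredient and is carried out correctly.

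There is one sign bookkeeping slip worth flagging. The weak super-solution inequality for $(\rho,\phi)$ is only available for \emph{non-negative} test functions, so you need $\varphi_n\geq 0$. But the regularized backward dual problem \eqref{reg_dual} with a \emph{non-negative} source $\psi$ produces $\varphi_n\leq 0$ (reverse time and compare with the zero solution, absorbing the zeroth-order term $f\varphi$ by an exponential weight). The fix is immediate: take $\psi\leq 0$, which yields $\varphi_n\geq 0$; then $\partial_\nu\varphi_n\leq 0$ on $\partial B$, and since $p_\infty-\phi\leq 0$ there the lateral boundary contribution $\int_{\partial B}(p_\infty-\phi)\,m\,\partial_\nu\varphi_n$ is non-negative and can be dropped on the correct side. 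One then concludes $\iint(\varrho_\infty-\rho)\psi\geq 0$ for all $\psi\leq 0$, i.e.\ $\varrho_\infty\leq\rho$. Relatedly, your displayed subtracted inequality should carry $\geq$ rather than $\leq$, and the boundary term is $(p_\infty-\phi)\,m\,\partial_\nu\varphi$ with no extra factor of $B$. With these sign corrections the argument goes through verbatim.
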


So in the sense of comparison with barriers (or viscosity sense) we have that
\[ (m-\varrho^E) V_{p_{\infty}} \geq \left[ -m \nabla \phi - \vec{b}(m-\varrho^E) \right] \cdot \vec{\nu} \text{ on } \p \{p_{\infty}>0\}. \] 

Analogously, we have the following proposition:

\begin{proposition} Let the pair $(\rho,\phi)$ be a classical sub-solution on $B \times (t_1,t_2)$, then if
\begin{enumerate}
    \item $\varrho_{\infty}(t=t_1) \geq \rho(t=t_1)$ \text{ on } B 
    \item $p_{\infty} \geq \phi$ \text{ on } $\p B \times [t_1,t_2]$ ,
\end{enumerate} then we have that $\varrho_{\infty} \geq \rho$ on $B \times [t_1,t_2].$
\end{proposition}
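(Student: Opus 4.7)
The plan is to mirror the argument for the classical super-solution proposition just proved. First, invoke the preceding lemma, which states that any classical sub-solution $(\rho,\phi)$ is automatically a weak sub-solution of the density equation in \eqref{Hele-Shaw} on $B\times(t_1,t_2)$ with initial density $\rho_1=\rho(t=t_1)$. Second, recall from Theorem \ref{Compactness-Theorem} that $(\varrho_\infty,p_\infty)$ is a weak solution (in particular a weak super-solution) of \eqref{Hele-Shaw} on the same cylinder, and from Lemma \ref{Pressure_Bounds} and its corollaries that it lies in the correct regularity class $L^\infty((t_1,t_2);L^1(B)\cap L^\infty(B))\times L^2((t_1,t_2);H^1(B)\cap L^\infty(B))$ with compact support.

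Next I would apply the comparison principle Theorem \ref{comparison}, used here in its sub-/super-solution form rather than its solution/solution form. Hypothesis (1), $\rho(t=t_1)\leq\varrho_\infty(t=t_1)$ on $B$, supplies the required initial ordering, and hypothesis (2), $\phi\leq p_\infty$ on $\p B\times[t_1,t_2]$, supplies the lateral boundary ordering of the pressures. The comparison principle then yields $\rho\leq \varrho_\infty$ a.e. on $B\times[t_1,t_2]$, which is exactly the conclusion sought.

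The only genuine step to check is that the Hilbert duality argument of Section \ref{uniqueness_section} still delivers the conclusion when one side is merely a sub-solution and the other merely a super-solution. This is a routine adaptation: subtracting the two weak formulations produces an inequality of the form
\[
\iint_{B\times(t_1,t_2)} (\varrho_\infty-\rho+p_\infty-\phi)\bigl[A\,\p_t\varphi + B\,\nabla\cdot(m\nabla\varphi) - A(\nabla\varphi\cdot\vec b - \varphi f)\bigr]\,dx\,dt \geq 0
\]
for all nonnegative test functions $\varphi\in C^\infty_c(B\times[t_1,t_2))$ vanishing on the parabolic boundary, where $A,B$ are the same bounded nonnegative coefficients as in \eqref{AB}. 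Solving the regularized dual problem \eqref{reg_dual} with a nonnegative source $\psi\geq 0$ produces a nonnegative test function $\varphi_n$ by the maximum principle for backward parabolic equations, and the error estimates of Lemma \ref{dual_phi_ineq} let us pass to the limit $n\to\infty$ exactly as before. The sign of $\psi$ is preserved throughout, yielding $\iint(\varrho_\infty-\rho)\psi\geq 0$ for all $\psi\geq 0$, and hence $\varrho_\infty\geq\rho$ a.e. The main obstacle, as in the super-solution case, is checking that the sign conventions in the dual problem are compatible with one-sided inequalities, but this is the same observation used in \cite{guillen2022hele} Proposition 5.1 and requires no new ideas beyond those in Section \ref{uniqueness_section}.
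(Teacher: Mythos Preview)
Your proposal is correct and follows exactly the approach the paper takes: the paper does not give a separate proof for this proposition but simply states it as the analogue of the super-solution case, relying on the preceding lemma (classical sub-solutions are weak sub-solutions) together with the density comparison principle Theorem~\ref{comparison}. Your additional remarks about adapting the Hilbert duality argument to one-sided inequalities are a reasonable elaboration of what the paper leaves implicit.
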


So in the sense of comparison with barriers (or viscosity sense) we have that
\[ (m-\varrho^E) V_{p_{\infty}} \leq \left[ -m \nabla \phi - \vec{b}(m-\varrho^E) \right] \cdot \vec{\nu} \text{ on } \p \{p_{\infty}>0\} \]

Hence, we have derived the velocity law in the viscosity sense. That is we have proven Theorem $\ref{velocity-law-theorem}$. 

\section{Congestion Case} \label{Characterization}  In this section we focus on the congestion case. That is we assume throughout this section that \eqref{congestion} holds. So now if $\Om$ is a compact subset of $\R^d$ such that $p_k$ is uniformly supported in $\Om \times [0,T]$ then there exists by continuity an $\e>0$ such that
\begin{equation} m_t + \e \leq mf + \nabla \cdot(m \vec{b}) \text{ in } \Om \times [0,T]. \label{new_monotocity} \end{equation} 

First we recall the definition of the streamlines. Since the external density solves a transport equation, the natural coordinates are the streamlines (or characteristics). For any $x_0 \in \R^d$, we let the streamline $X(\cdot,x_0)$ be the unique solution to
\[ \begin{cases} \p_t X(t,x_0) = -\vec{b}(X(t,x_0)) \text{ for } t \in \R \\ X(0,x_0)=x_0 \end{cases}. \] As $\vec{b}$ is Lipschitz, we know that there is a unique solution for all time and is continuously differentiable in time. This uniqueness actually implies that for any $t_0 \in \R$ that $X(t_0,x) : \R^d \rightarrow \R^d$ is invertible and its inverse is given by $X(-t_0,x)$ (see \cite{kim2019singular}). \\

We will show that $\{p_{\infty}(t)>0\} := \Sigma(t)$ is non-decreasing with respect to the streamlines. That is if $x \in \Sigma(t)$, then $X(s, X(-t,x)) \in \Sigma(s)$ for all $s \geq t$ (the streamline containing $x$ at time $t$ is in $\Sigma(s)$ for all $s \geq  t$). This will follow from deriving a uniform positive retention principle for $p_k$ along streamlines through a refined Aronson-B\'enilan estimate. We refine the barrier used in Section \ref{general-semi-convexity}. \\  

To derive such an Aronson-B\'enilan estimate, we first drop the subscript $k$ and recall that $w := \frac{1}{m} \nabla \cdot(m \nabla p)$. Then we define the elliptic operator for sufficiently large $C>0$
\[ \mathcal{L}[\phi] := -C-C|\nabla p|^2 + 2\nabla \phi \cdot \nabla p + \nabla \phi \cdot \vec{b} + (k-1)( \phi(\phi+F)+(\nabla \phi + \nabla F) \cdot(p \nabla \lambda + 2 \nabla p) + p(\Delta \phi + \Delta F)). \] 
Then when $C$ is large enough, by using \eqref{w1}, \eqref{w2}, \eqref{w3} with a similar computation as in Section \ref{semi-convexity} we see that $ \p_t w \geq \mathcal{L}[w]$. \\ 

Now $\tau>0$ and for $\alpha,\beta>0$ we define our barrier
\[ \psi(x,t;\tau):=-F+\frac{\alpha p - \beta}{k-1} - \frac{1}{(k-1)(t-\tau)}.\] We note that our constants $C$ and $\tilde{C}$ in estimating $\p_t \psi$ and $\mathcal{L}[\psi]$ are also independent of $\alpha$ and $\beta$. Plugging in $\psi$ into $\mathcal{L}$ along with Young's Inequality on the $2\nabla \psi \cdot \nabla p$ term implies there exists a large enough constant $\tilde{C} \geq C $ such that
\[ \mathcal{L}[\psi] \geq -\tilde{C}-\tilde{C}|\nabla p|^2 + \frac{2 \alpha}{k-1} |\nabla p|^2 + \frac{\alpha}{k-1} \nabla p \cdot \vec{b} + \alpha pw + 2\alpha |\nabla p|^2 + (k-1)[\psi^2 + \psi F  ].  \] Then from using \eqref{PME_pressure}
\[ \p_t \psi \leq \tilde{C} + \frac{\alpha}{k-1} |\nabla p|^2 + \frac{\alpha}{k-1} (\nabla p \cdot \vec{b}) + \alpha pw + \alpha pF + \frac{1}{(k-1)(t-\tau)^2}. \] This implies that
\begin{equation} \p_t \psi - \mathcal{L}[\psi] \leq  2\tilde{C} + |\nabla p|^2(\tilde{C}-\frac{\alpha}{k-1} -2\alpha) + \alpha p F + \frac{1}{(k-1)(t-\tau)^2}  -(k-1)[\psi^2+\psi F]. \label{psi_eqn1} \end{equation} So we first we choose $\beta > 2\alpha \sup_{k>1} ||p_k||_{L^{\infty}(Q_T)}$, so that $\alpha p - \beta \leq 0$. This combined with $F>0$ gives 
\begin{equation} (\psi^2 + \psi F) = (\psi+\frac{F}{2})^2-\frac{F^2}{4} \geq (-\frac{F}{2} + \frac{\alpha p - \beta}{k-1})^2 + \frac{1}{(k-1)^2(t-\tau)^2} - \frac{F^2}{4} \label{squared_term_1} \end{equation}
\begin{equation} =  \left( \frac{\alpha p - \beta}{k-1} \right)^2 - F\frac{(\alpha p - \beta)}{k-1} + \frac{1}{(k-1)^2(t-\tau)^2}. \label{squared_term_2} \end{equation} So using \eqref{squared_term_1} and \eqref{squared_term_2} in \eqref{psi_eqn1}
\[ \p_t \psi - \mathcal{L}[\psi] \leq 2\tilde{C} + |\nabla p|^2(\tilde{C}-\frac{\alpha}{k-1}-2\alpha) + F (2\alpha p - \beta) - \frac{ (\alpha p - \beta)^2}{k-1} . \] Now because $2 \alpha p - \beta \leq 0$ due to our choice of $\beta$ and $F \geq \e > 0$ inside $\Om \times [0,T]$ we have that
\[ \p_t \psi - \mathcal{L}[\psi] \leq 2\tilde{C} + |\nabla p|^2(\tilde{C}-\frac{\alpha}{k-1}-2\alpha) + \e (2\alpha p - \beta) - \frac{(\alpha p - \beta)^2}{k-1} \text{ in } \Om \times [0,T] . \]   Now we let $\alpha = \tilde{C}/2$ and $\beta = 2 \alpha \sup_{k>1} ||p_k||_{L^{\infty}(Q_T)} + 2\tilde{C}/\e$, then we see that
\[ \p_t \psi - \mathcal{L}[\psi] \leq 0 \text{ in } \Om \times [0,T]. \] That is $\psi$ is a sub-solution of a parabolic equation where $w$ is a super solution of in $\Om \times [0,T]$.  And recalling that $p$ is compactly supported inside $\Om \times [0,T]$ gives us $w \equiv 0$ on $\p \Om \times [0,T]$. Also we have $\psi \leq 0$ on $\p \Om \times [0,T]$, so now as $\lim_{\tau \rightarrow 0+} \psi(x,0;\tau)=-\infty$, we deduce from the comparison principle that 
\[ \frac{1}{m} \nabla \cdot(m \nabla p) = w(x,t) \geq \psi(x,t;0) \geq -F  - \frac{\beta}{k-1} - \frac{1}{(k-1)t}. \] Hence, we have shown

\begin{lemma} [Refined  Aronson-B\'enilan Estimate] Assume we are in the congestion case $(\ref{congestion})$ with Assumptions $\ref{Assum_1}$, $\ref{Assum_2}$, and $\ref{hard_constraint_restriciton}$, then we have the following refined semi-convexity estimate for all $k>1$
\begin{equation} \frac{1}{m} \nabla \cdot(m \nabla p_k) \geq -F -\frac{\beta}{k-1} - \frac{1}{(k-1)t}  \label{wk_refined}\end{equation} for some constant $\beta>0$ independent of $k$ in the sense of distributions.
\end{lemma}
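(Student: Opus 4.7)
The plan is to parallel the general semi-convexity argument from Section~\ref{general-semi-convexity}, but replace the crude barrier $\psi_{\text{gen}} = -\frac{2}{(k-1)(t+\tau)} + p - K_1$ (which only remembers that $p$ is bounded) with a refined barrier that exploits the positive lower bound $F \geq \e > 0$ guaranteed by the congestion assumption $(\ref{congestion})$. First, I would reuse the computation that $w_k$ is a distributional supersolution of a parabolic operator derived from differentiating the pressure equation \eqref{PME_pressure} twice in space; call this operator $\mathcal{L}$, so that $\partial_t w_k \geq \mathcal{L}[w_k]$. The terms of $\mathcal{L}$ that matter for the barrier are the quadratic reaction term $(k-1)\phi(\phi + F)$, the transport-type terms $(k-1)(\nabla \phi + \nabla F)\cdot(p\nabla\lambda + 2\nabla p)$ and $\nabla\phi\cdot\vec b$, and the second-order term $(k-1)p\Delta\phi$; the remaining error terms can be absorbed into a constant $\tilde C$ independent of $k$ after Cauchy--Schwarz, as in Section~\ref{general-semi-convexity}.

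Next I would propose the barrier
\[
\psi(x,t;\tau) := -F + \frac{\alpha p - \beta}{k-1} - \frac{1}{(k-1)(t-\tau)},
\]
with $\alpha,\beta>0$ to be chosen. The shape $-F + \tfrac{\alpha p - \beta}{k-1}$ is designed so that the dominant $(k-1)(\psi^2 + \psi F)$ contribution, after completing the square as $(\psi + F/2)^2 - F^2/4$, produces a $\tfrac{1}{(k-1)(t-\tau)^2}$ term that cancels the $\partial_t \psi$ singular term, and simultaneously produces the linear-in-$F$ term $-F(2\alpha p - \beta)/(k-1) \cdot (k-1) = -F(2\alpha p - \beta)$. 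I would then choose $\beta > 2\alpha \sup_{k>1}\|p_k\|_{L^\infty(Q_T)}$ so that $2\alpha p - \beta \leq 0$, giving a strictly negative contribution $F(2\alpha p - \beta) \leq \e(2\alpha p - \beta)$ on $\Omega\times[0,T]$ thanks to the congestion lower bound $F \geq \e$. After fixing $\alpha := \tilde C / 2$ and then enlarging $\beta$ to $2\alpha\sup\|p_k\|_\infty + 2\tilde C/\e$, the $|\nabla p|^2$ coefficient becomes non-positive and the constant $2\tilde C$ is absorbed by the negative term $\e(2\alpha p - \beta) \leq -2\tilde C$, giving $\partial_t \psi - \mathcal L[\psi] \leq 0$ in $\Omega\times[0,T]$.

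Finally, I would conclude by comparison. On the parabolic boundary $\partial \Omega \times [0,T]$, since $p_k$ is uniformly compactly supported in $\Omega\times[0,T]$ (Lemma~\ref{Pressure_Bounds}), we have $w_k \equiv 0$ while $\psi \leq 0$ by the choice $\beta > 0$, so $w_k \geq \psi$ there. At $t = \tau^+$ (or after sending a small initial layer to zero), the blow-up $-\frac{1}{(k-1)(t-\tau)} \to -\infty$ guarantees $\psi(\cdot,\tau^+;\tau) \leq w_k(\cdot,\tau)$ trivially. Applying the comparison principle for the parabolic operator gives $w_k \geq \psi$ on $\Omega\times[\tau,T]$; sending $\tau\downarrow 0$ yields the claimed bound
\[
\frac{1}{m}\nabla\cdot(m\nabla p_k) \geq -F - \frac{\beta}{k-1} - \frac{1}{(k-1)t}
\]
in the sense of distributions after justifying the comparison via the smooth approximations of $p_k$ from Section~\ref{weak_mPME}.

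The delicate step is the choice of $\alpha$ and $\beta$: the coefficient in front of $|\nabla p|^2$ must be non-positive \emph{without} making $\alpha$ so large that the $(k-1)\psi^2$ term fails to absorb the singular $\partial_t \psi$ term as $\tau\to 0$. The balance works precisely because (i) the $(\psi+F/2)^2$ square already reproduces the exact coefficient needed to cancel the $\tfrac{1}{(k-1)(t-\tau)^2}$ piece in $\partial_t\psi$, and (ii) the congestion hypothesis $F \geq \e > 0$ on the support region is what makes the linear-in-$p$ remainder $F(2\alpha p -\beta)$ strictly negative and of the right size to swallow the fixed constant $\tilde C$. Without congestion, this absorption would fail and one only recovers the weaker estimate of Lemma~\ref{semi-convexity}.
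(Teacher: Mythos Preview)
Your proposal is correct and follows essentially the same approach as the paper: the same barrier $\psi = -F + \tfrac{\alpha p - \beta}{k-1} - \tfrac{1}{(k-1)(t-\tau)}$, the same completion of the square $(\psi + F/2)^2 - F^2/4$, the same choices $\alpha = \tilde C/2$ and $\beta = 2\alpha\sup_k\|p_k\|_\infty + 2\tilde C/\e$, and the same boundary comparison on $\Omega\times[0,T]$ using that $p_k$ is compactly supported. Your explanation of why congestion is essential (to turn $F(2\alpha p - \beta)$ into a negative term of controlled size via $F \geq \e$) is in fact more explicit than the paper's.
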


This  Aronson-B\'enilan Estimate implies the following estimates by using \eqref{wk_refined} in \eqref{PME_pressure}

\begin{corollary} \label{uniform_SL_estimates} Assume we are in the congestion case $(\ref{congestion})$ with Assumptions $\ref{Assum_1}$, $\ref{Assum_2}$, and $\ref{hard_constraint_restriciton}$, then for all $k>1$ we have for some constant $\beta>0$ independent of $k$ that
\[ \frac{d}{dt} p_k(X(t,x_0),t) \geq -\left( \beta + \frac{1}{t} \right) p_k(X(t,x_0),t) \] and
\[ \frac{d}{dt} v_k(X(t,x_0),t) \geq -\frac{1}{k-1} \left( \beta + \frac{1}{t} \right) v(X(t,x_0),t) \] in the sense of distributions.
\end{corollary}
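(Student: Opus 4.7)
The proof is a direct translation of the refined Aronson-B\'enilan estimate along streamlines, combined with the chain rule. First I would compute, at least formally (with the usual justification via the smooth approximations from Section \ref{weak_mPME}),
\[ \frac{d}{dt} p_k(X(t,x_0),t) = (\partial_t p_k)(X(t,x_0),t) + \nabla p_k(X(t,x_0),t) \cdot \dot X(t,x_0) = (\partial_t p_k - \vec{b}\cdot \nabla p_k)(X(t,x_0),t), \]
using the ODE defining the streamline. Plugging the pressure equation \eqref{PME_pressure} into this identity kills the drift term exactly, leaving
\[ \frac{d}{dt} p_k(X(t,x_0),t) = |\nabla p_k|^2 + (k-1) p_k \left( w_k + F\right), \]
where $w_k = \tfrac{1}{m}\nabla\cdot(m\nabla p_k)$.

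Now I would apply the Refined Aronson-B\'enilan estimate \eqref{wk_refined}, which gives $w_k + F \geq -\tfrac{\beta}{k-1} - \tfrac{1}{(k-1)t}$ in the distributional sense, and multiply by the nonnegative quantity $(k-1)p_k$ (using $p_k\geq 0$ from the comparison principle recalled before Lemma \ref{Pressure_Bounds}). Dropping the nonnegative term $|\nabla p_k|^2$ yields
\[ \frac{d}{dt} p_k(X(t,x_0),t) \geq -\left(\beta + \frac{1}{t}\right) p_k(X(t,x_0),t), \]
which is the first claimed inequality.

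For the second inequality, I would use the constitutive law $p_k = \tfrac{k}{k-1} v_k^{k-1}$ to transfer the bound. Differentiating along the streamline gives
\[ \frac{d}{dt} p_k(X(t,x_0),t) = k\, v_k^{k-2}(X(t,x_0),t)\, \frac{d}{dt} v_k(X(t,x_0),t), \]
so the pressure estimate just obtained becomes, after dividing by $k v_k^{k-2}\geq 0$ and simplifying using $p_k = \tfrac{k}{k-1} v_k^{k-1}$,
\[ \frac{d}{dt} v_k(X(t,x_0),t) \geq -\frac{1}{k v_k^{k-2}}\left(\beta + \frac{1}{t}\right) \frac{k}{k-1} v_k^{k-1} = -\frac{1}{k-1}\left(\beta + \frac{1}{t}\right) v_k(X(t,x_0),t). \]

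The only mild subtlety is that \eqref{wk_refined} only holds in the sense of distributions, so strictly speaking each manipulation above should be carried out first on the smooth approximations $p_k^{(\varepsilon)}$ built in Section \ref{weak_mPME} and then passed to the limit as $\varepsilon \downarrow 0$; this is routine since the streamline map $X(\cdot,x_0)$ is smooth (because $\vec{b}\in C^\infty$ with bounded derivatives by Assumption \ref{Assum_1}) and the right-hand sides are locally integrable. I do not expect any significant obstacle in this step; the entire corollary is essentially an algebraic consequence of the refined Aronson-B\'enilan bound together with the streamline identity $\partial_t - \vec{b}\cdot\nabla = \tfrac{d}{dt}\big|_{X(\cdot,x_0)}$.
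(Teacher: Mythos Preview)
Your proposal is correct and matches the paper's approach exactly: the paper's proof consists of a single sentence, ``This Aronson-B\'enilan Estimate implies the following estimates by using \eqref{wk_refined} in \eqref{PME_pressure},'' and you have accurately filled in the computation behind that sentence. The only cosmetic remark is that the division by $k v_k^{k-2}$ in the second step can be avoided by working directly with the normalized density equation \eqref{normalized-density} along streamlines (as the paper implicitly does before Lemma \ref{density_time}), but your derivation via the constitutive law is equally valid in the distributional sense.
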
 

The normalized density estimate implies the following monotonicity property:
\begin{lemma} The limiting normalized density $v_{\infty}:=\varrho_{\infty}/m$ is non-decreasing along streamlines for $t>0$.
\end{lemma}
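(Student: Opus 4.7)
My plan is to apply Grönwall's inequality to the streamline differential inequality of Corollary~\ref{uniform_SL_estimates}, then integrate against test functions in space and time, and pass to the limit $k \to \infty$ using the strong $L^1(Q_T)$ convergence of $v_k$ to $v_\infty$ established in the existence section.

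Working at the level of the smooth approximations of $v_k$ from the weak-solution theory, Corollary~\ref{uniform_SL_estimates} integrates to give, for a.e. $x_0 \in \R^d$ and all $0 < s < t \leq T$,
\[ v_k(X(t,x_0),t) \geq v_k(X(s,x_0),s) \exp\!\left(-\tfrac{1}{k-1}\bigl(\beta(t-s) + \log(t/s)\bigr)\right), \]
and the exponential factor tends to $1$ as $k \to \infty$, uniformly on compact subsets of $\{(s,t) : 0 < s \leq t \leq T\}$.

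Next, fix non-negative $\phi \in C_c^\infty(\R^d)$ and $\psi_1, \psi_2 \in C_c^\infty((0,T))$ with $\max \operatorname{supp}\psi_1 < \min \operatorname{supp}\psi_2$, multiply the previous inequality by $\phi(x_0)\psi_1(s)\psi_2(t)$, and integrate in $(x_0,s,t)$. Because $x_0 \mapsto X(t,x_0)$ is a smooth diffeomorphism with Jacobian $J(t,x) := |\det D_x X(-t,x)|$ (smooth by Assumption~\ref{Assum_1}), changing variables on each side rewrites the inequality in Eulerian form with test functions of the type $(t,x) \mapsto \phi(X(-t,x)) J(t,x) \psi_j(t) \in C_c^\infty(Q_T)$. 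The $L^1(Q_T)$ convergence $v_k \to v_\infty$ combined with the uniform convergence of the exponential factor then lets me pass to the limit, and undoing the change of variables yields
\[ \Bigl(\int_0^T \psi_1\Bigr) \iint \phi(x_0) v_\infty(X(t,x_0),t) \psi_2(t) \, dt \, dx_0 \geq \Bigl(\int_0^T \psi_2\Bigr) \iint \phi(x_0) v_\infty(X(s,x_0),s) \psi_1(s) \, ds \, dx_0. \]

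Finally, taking $\psi_1,\psi_2$ to be mollifiers concentrating at two Lebesgue points $s_0 < t_0$ of $t \mapsto \int \phi(x_0) v_\infty(X(t,x_0),t)\, dx_0$ shows that this map is essentially non-decreasing on $(0,T)$; running $\phi$ over a countable dense family of non-negative test functions (or letting $\phi$ approximate indicators of balls) upgrades this integrated monotonicity to the pointwise statement that $t \mapsto v_\infty(X(t,x_0),t)$ is non-decreasing on $(0,T)$ for a.e. $x_0 \in \R^d$. The main delicate step is the Grönwall reduction in the second paragraph: Corollary~\ref{uniform_SL_estimates} delivers the streamline inequality only distributionally, so one must first establish the pointwise-along-streamlines bound on the smooth approximants of $v_k$ and then verify it is preserved under the passage to the limit in the approximation scheme.
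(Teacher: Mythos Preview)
Your proposal is correct and follows essentially the same approach the paper has in mind: the paper states this lemma as an immediate consequence of the normalized-density estimate in Corollary~\ref{uniform_SL_estimates} (``The normalized density estimate implies the following monotonicity property'') without giving a written proof, and for the companion pressure lemma just below it invokes exactly the ingredients you use---Gr\"onwall's inequality together with the fact that the streamline map is a Lipschitz bijection preserving sets of full measure---to pass from the $k$-level inequality to the limit. Your test-function/change-of-variables argument is a careful way of justifying the a.e.\ passage to the limit that the paper leaves implicit.
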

Note that as the streamlines $X$ are a lipschitz bijection of $Q_T$, $X$ maps sets of full measure to sets of full measure. This along with the pressure estimate in Corollary \ref{uniform_SL_estimates} combined with Grönwall's Inequality gives us:
 \begin{lemma} [Retention Along Streamlines For Positive Time] Assume that Assumptions $\ref{Assum_1}$, $\ref{Assum_2}$, and  $\ref{hard_constraint_restriciton}$ are met, then we have the following estimate for all  $k>1$ and $\tau>0$
\[  p_k(X(t,x_0),t) \geq p_k(X(\tau,x_0),\tau) \exp[-(\beta+\frac{1}{\tau})(t-\tau)] \] for some constant $\beta>0$ independent of $k$ for a.e. $(x_0,t), (x_0,\tau) \in Q_T$ such that $t \geq \tau$.  This implies that for a.e. $(x_0,t)$ and $(x_0,\tau) \in Q_T$ such that $t \geq \tau$ that we have the estimate
\[ p_{\infty}(X(t,x_0),t) \geq p_{\infty}(X(\tau,x_0),\tau) \exp[-(\beta+\frac{1}{\tau})(t-\tau)]. \] This implies that for a.e. $(x,t) \in Q_T$ with $t>0$ that $\Sigma(t)$ is non-decreasing with respect to streamlines.
\end{lemma}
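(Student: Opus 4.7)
The plan is to integrate the pointwise streamline estimate
\[ \frac{d}{dt} p_k(X(t,x_0),t) \geq -\left(\beta + \frac{1}{t}\right) p_k(X(t,x_0),t) \]
from Corollary \ref{uniform_SL_estimates} via Grönwall's inequality. Fix $\tau > 0$ and restrict attention to times $t \in [\tau,T]$. On this interval the monotonicity $1/t \leq 1/\tau$ lets us replace the singular coefficient by the constant $\beta + 1/\tau$, so that $\varphi(t) := p_k(X(t,x_0),t)$ satisfies $\varphi'(t) \geq -(\beta + 1/\tau)\varphi(t)$. Since $\varphi \geq 0$, the integrating factor $\exp[(\beta + 1/\tau)t]$ makes $t \mapsto \varphi(t)\exp[(\beta + 1/\tau)t]$ non-decreasing on $[\tau,T]$, which rearranges into the claimed bound for $p_k$.

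Some measure-theoretic care is needed because the inequalities in Corollary \ref{uniform_SL_estimates} hold only in the distributional sense and on a set of full measure. The flow map $\Phi : (t,x_0) \mapsto (X(t,x_0),t)$ is a bilipschitz bijection of $Q_T$ by Assumption \ref{Assum_1}, so it preserves null sets. Applying Fubini to the set on which the streamline inequality holds, we conclude that for almost every $x_0 \in \R^d$ the inequality holds in the distributional sense along the streamline $t \mapsto X(t,x_0)$ for a.e. $t \in (0,T)$; the Grönwall argument above is then valid on each such streamline and yields the $p_k$ inequality for a.e.\ pair $(x_0,\tau),(x_0,t) \in Q_T$ with $t \geq \tau$.

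To upgrade from $p_k$ to $p_\infty$ I would use that $p_k \to p_\infty$ in $L^1(Q_T)$ by Theorem \ref{Compactness-Theorem}, hence pointwise a.e.\ along a further subsequence. Since $\Phi$ preserves null sets, the Lagrangian pullbacks $p_k \circ \Phi$ converge a.e.\ in $Q_T$ to $p_\infty \circ \Phi$, so we may pass $k \to \infty$ in the $p_k$ inequality at a.e.\ pair of Lagrangian points to obtain the corresponding inequality for $p_\infty$. The monotonicity of $\Sigma(t)$ under the flow is then immediate: if $p_\infty(X(\tau,x_0),\tau) > 0$ for some such $\tau > 0$, the exponential lower bound forces $p_\infty(X(t,x_0),t) > 0$ for a.e.\ $t \geq \tau$, i.e.\ the forward streamline through $(X(\tau,x_0),\tau)$ lies in $\Sigma$ up to a null set in time.

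The Grönwall computation itself is routine; the main obstacle is the measure-theoretic bookkeeping that turns the distributional differential inequality of Corollary \ref{uniform_SL_estimates} into a genuine pointwise comparison along a.e.\ streamline, and that allows us to pass to the limit $k \to \infty$ in a coordinate system in which almost every streamline is a set of measure zero. This is precisely what is afforded by the remark preceding the lemma that the streamline map $X$ is a Lipschitz bijection of $Q_T$ and therefore maps full-measure sets to full-measure sets.
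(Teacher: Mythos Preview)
Your proposal is correct and follows exactly the approach the paper indicates: the paper's entire argument is the sentence ``This along with the pressure estimate in Corollary \ref{uniform_SL_estimates} combined with Gr\"onwall's Inequality gives us,'' together with the remark that the Lipschitz bijection $X$ preserves full-measure sets. You have simply unpacked this in appropriate detail, including the replacement of $1/t$ by $1/\tau$ on $[\tau,T]$ and the passage to the limit $k\to\infty$ via the $L^1(Q_T)$ convergence of Theorem \ref{Compactness-Theorem}.
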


To obtain the retention property down to $t=0$, we use the classical sub-solution $|\Pi|_{+} = \max(\Pi,0)$, as described in Section ($\ref{barrier_appendix}$), to the pressure equation for all $k>1$. This barrier was inspired by the one in \cite{kim2019singular}. 

\begin{lemma} [Retention Property Along Streamlines For $t=0$] Assume that $p_k(x,0) \rightarrow p_{\infty}(x,0)$ uniformly such that $\Sigma(0)$ is open, we are in the congestion case $(\ref{congestion})$, and Assumptions $\ref{Assum_1}$, $\ref{Assum_2}$, and  $\ref{hard_constraint_restriciton}$. Then if $p_{\infty}(x,0)>0$ we have that $p_{\infty}(X(t,x),t)>0$ for a.e. $(x,t) \in Q_T.$ That is for a.e. $(x,t) \in Q_T$ that $X(t,\Sigma(0)) \subset \Sigma(t) $.
\end{lemma}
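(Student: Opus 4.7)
The plan is to propagate the initial positivity of $p_\infty(\cdot,0)$ along streamlines for a short initial interval $[0,t_0]$ using the congestion-adapted sub-solution $|\Pi|_+$ from Lemma \ref{Big_Pi_Barrier}, and then splice with the already-established retention estimate for positive time. Fix a point $x_0 \in \Sigma(0)$, so that $p_\infty(x_0,0)>0$. Since $\Sigma(0)$ is open and $p_\infty(\cdot,0)$ is continuous on it (being the uniform limit of the continuous $p_k(\cdot,0)$), there exists $r>0$ and $\eta>0$ such that $B_r(x_0) \subset \Sigma(0)$ and $p_\infty(\cdot,0) \geq \eta$ on $\overline{B_{r/2}(x_0)}$. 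By the uniform convergence hypothesis, there exists $K_0$ such that for every $k\geq K_0$ we have $p_k(\cdot,0) \geq \eta/2$ on $\overline{B_{r/2}(x_0)}$.

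Next I would deploy the barrier $|\Pi|_+$ from Lemma \ref{Big_Pi_Barrier} in the congestion case, centered at $x_0$. The barrier should be chosen with initial profile compactly supported in $B_{r/2}(x_0)$, with peak value at most $\eta/2$, so that
\[ |\Pi|_+(x,0) \leq p_k(x,0) \text{ in } \R^d \]
for all $k \geq K_0$. Since $|\Pi|_+$ is by construction a classical sub-solution of the pressure equation \eqref{PME_pressure} uniformly in $k>1$ (this is where the congestion assumption \eqref{new_monotocity} with $F \geq \varepsilon/m>0$ inside $\Om\times[0,T]$ enters, allowing the construction in Lemma \ref{Big_Pi_Barrier}), and since $p_k$ is a weak super-solution of the same equation, the comparison principle for the mPME yields
\[ p_k(x,t) \geq |\Pi|_+(x,t) \quad \text{in } Q_T. \]
Passing $k\to\infty$ via the $L^1(Q_T)$ convergence $p_k \to p_\infty$ preserves this inequality a.e., so $p_\infty \geq |\Pi|_+$ a.e. in $Q_T$. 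By the defining property of the $|\Pi|_+$ barrier, its positivity set travels along the streamline through $x_0$, so there exists $t_0>0$ (depending on $\eta,r$ and bounds on $\vec b,f,m$) with $|\Pi|_+(X(t,x_0),t)>0$ for all $t \in (0,t_0]$. Consequently $p_\infty(X(t,x_0),t)>0$ for almost every such $(x_0,t)$.

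Finally, for $t \in (t_0,T]$ I would apply the retention estimate along streamlines for positive time (the preceding lemma) starting from time $t_0$: since $p_\infty(X(t_0,x_0),t_0)>0$ we obtain
\[ p_\infty(X(t,x_0),t) \geq p_\infty(X(t_0,x_0),t_0)\exp\!\bigl[-(\beta+1/t_0)(t-t_0)\bigr] > 0, \]
which finishes the proof modulo the a.e.\ caveat (handled by Fubini and the fact that $X(\cdot,\cdot)$ is a bi-Lipschitz bijection of $Q_T$ that preserves null sets). The principal obstacle in this argument is arranging the barrier $|\Pi|_+$ in Lemma \ref{Big_Pi_Barrier} so that it simultaneously (i) lies below $p_k(\cdot,0)$ uniformly in $k \geq K_0$, (ii) is a sub-solution of \eqref{PME_pressure} uniformly in $k$, which hinges on the strict inequality $m_t + \varepsilon \leq mf + \nabla\cdot(m\vec b)$ providing a positive source along streamlines that dominates the degenerate diffusion, and (iii) remains strictly positive on a time interval $[0,t_0]$ independent of $k$ so that the limit $k\to\infty$ is meaningful. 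The uniform convergence of initial data is essential to ensure the comparison holds uniformly in $k$ before taking limits.
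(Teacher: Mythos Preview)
Your argument is correct and follows the same core idea as the paper: place the barrier $|\Pi|_+$ from Lemma~\ref{Big_Pi_Barrier} below $p_k(\cdot,0)$ using the uniform convergence of the initial pressures, apply the comparison principle uniformly in $k$, and pass to the limit.

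The one point where you diverge from the paper is the splicing step. You assume the barrier only yields positivity along the streamline for a short interval $[0,t_0]$ and then invoke the retention-for-positive-time lemma to continue. This is unnecessary: the explicit barrier in Lemma~\ref{Big_Pi_Barrier} is
\[
\Pi(x,t) = \gamma^2 - r^2 e^{2Lt}\,|x - X(t,x_0)|^2,
\]
so $\Pi(X(t,x_0),t) = \gamma^2 > 0$ for \emph{every} $t \in [0,T]$. Hence a single comparison with $|\Pi|_+$ already gives $p_\infty(X(t,x_0),t) \geq \gamma^2$ for a.e.\ $t \in [0,T]$, and the appeal to the preceding lemma can be dropped. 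Your route still works, but the paper's is shorter and avoids the measure-theoretic bookkeeping of splicing two a.e.\ statements at an intermediate time.
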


\begin{proof} Let $x_0 \in \Sigma(0)$, then there is an $r_1>0$ such that $B_{r_1}(x_0) \subset \Sigma(0)$. By choosing $\gamma$ and $r$ sufficiently small in the barrier $|\Pi|_{+}$ constructed in Lemma $\ref{Big_Pi_Barrier}$, we can assume that for all $k$ sufficiently large that
\[ |\Pi|_{+}(x,0) \leq p_k(x,0). \] Hence, by the comparison principle, we conclude that for all large $k$ that
\[ |\Pi|_{+}(x,t) \leq p_k(x,t) \Rightarrow |\Pi|_{+}(x,t) \leq p_{\infty}(x,t) \text{ for a.e. } (x,t) \in Q_T . \] But notice $|\Pi(X(t,x_0),t)|_{+}=\gamma^2>0 \Rightarrow p_{\infty}(X(t,x_0),t) \geq \gamma^2 > 0$ for a.e. $(x_0,t) \in Q_T$. That is we have $X(t,\Sigma(0)) \subset \Sigma(t)$ for a.e. $(x,t) \in Q_T$.
\end{proof}

By combining these two lemmas, we have that
\begin{theorem} [$\Sigma(t)$ Increases Along Streamlines] Assume that  $p_k(x,0) \rightarrow p_{\infty}(x,0)$ uniformly such that $\Sigma(0)$ is open, we are in the congestion case $(\ref{congestion})$, and Assumptions $\ref{Assum_1}$, $\ref{Assum_2}$, and $\ref{hard_constraint_restriciton}$ are met. Then $\Sigma(t) = \{p_{\infty}(t)>0\}$ is non-decreasing with respect to streamlines for a.e. $(x,t) \in Q_T$. \label{inc_streamline}
\end{theorem}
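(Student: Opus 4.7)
The plan is to simply splice the two retention lemmas established immediately before the theorem: the positive-time retention estimate handles the case $t_1>0$, and the $t=0$ retention lemma provided by the barrier $|\Pi|_{+}$ from Lemma \ref{Big_Pi_Barrier} handles the case $t_1=0$. The underlying point is that because the streamline map $X(\cdot,\cdot)$ is a bi-Lipschitz bijection in the space variable, null sets are preserved under translation along streamlines, so ``for a.e.'' statements for $(x_0,t)$ and for $(y_0,t)$ with $y_0=X(-t_1,x_0)$ are interchangeable.

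First I would fix a point $(x_0,t_1)\in Q_T$ with $x_0\in\Sigma(t_1)$ and let $y_0:=X(-t_1,x_0)$, so that $X(t_1,y_0)=x_0$. The goal is to show that $X(t_2,y_0)\in\Sigma(t_2)$ for a.e.\ $t_2\in[t_1,T]$. For $t_1>0$, I would directly invoke the retention along streamlines for positive time with base point $y_0$ and $\tau=t_1$: this yields
\[
p_{\infty}(X(t_2,y_0),t_2)\ \geq\ p_{\infty}(X(t_1,y_0),t_1)\,\exp\!\left[-\left(\beta+\tfrac{1}{t_1}\right)(t_2-t_1)\right]\ =\ p_{\infty}(x_0,t_1)\,\exp\!\left[-\left(\beta+\tfrac{1}{t_1}\right)(t_2-t_1)\right]>0,
\]
which is the desired non-decreasing property.

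For $t_1=0$ we have $y_0=x_0\in\Sigma(0)$, and the hypotheses $p_k(\cdot,0)\to p_{\infty}(\cdot,0)$ uniformly together with $\Sigma(0)$ being open are exactly the conditions under which the retention lemma at $t=0$ applies. Thus $p_{\infty}(X(t_2,x_0),t_2)>0$ for a.e.\ $t_2$, and combining both cases gives $\Sigma(t)$ non-decreasing along streamlines for a.e.\ $(x,t)\in Q_T$.

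I do not expect a genuine obstacle in this proof; it is essentially a bookkeeping statement combining the two lemmas. The only point worth flagging is the ``a.e.'' qualification: since the initial-time retention lemma was derived by comparison with the classical sub-solution $|\Pi|_{+}$ (hence pointwise on a set of full measure) and the positive-time retention lemma was obtained pointwise on a set of full measure in the $(x_0,t)$ variables, one should argue once that the bijectivity of $X(t,\cdot)$ implies the corresponding conclusion is valid on a set of full measure in $(y_0,t)$, so that both conclusions can be simultaneously applied on a common full-measure set of streamlines. That verification uses only the fact, already recorded in this section, that $\vec{b}$ is Lipschitz and that $X(t,\cdot)$ is the bi-Lipschitz inverse of $X(-t,\cdot)$.
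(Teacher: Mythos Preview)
Your proposal is correct and follows exactly the paper's approach: the paper simply states ``By combining these two lemmas, we have that'' before the theorem, and you have spelled out that combination --- splitting into $t_1>0$ (positive-time retention) and $t_1=0$ (the $|\Pi|_{+}$ barrier lemma) --- together with the a.e.\ bookkeeping via the bi-Lipschitz streamline map, which the paper also records in this section.
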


Now we can prove Theorem $\ref{identification}$.

\begin{proof} As $\{p_{\infty}(x,t)>0\} \subset \{\varrho_{\infty}(x,t)=m(x,t)\}$, it suffices to show if $(x_0,t_0) \in \text{Int}(\{p_{\infty}(x,t)=0\})$ then $\varrho_{\infty}(x,t)=\varrho^E(x,t)$. So let $(x_0,t_0) \in  \text{Int}(\{p_{\infty}(x,t)=0\})$, then by Lemma $\ref{inc_streamline}$, we conclude that $X(t,X(-t_0,x_0)) \notin \Sigma(t)$ for $0 \leq t \leq t_0$ (the streamline containing $(x_0,t_0)$ is not in $\Sigma(t)$ for previous time). Combining this with $\varrho_{\infty}$ weakly solving
\[ \p_t \varrho_{\infty} = \nabla \cdot(\varrho_{\infty} \vec{b}) + f \varrho_{\infty} \text{ in } \text{Int}(\{p_{\infty}(x,t)=0\}) \] we conclude that $\varrho_{\infty}(x_0,t_0)=\varrho^E(x_0,t_0)$.
\end{proof}

 This shows Theorem $\ref{identification}$. Note that Theorem $\ref{identification}$ implies that patch solutions stay patch solutions for all times for congestion.

\section{Appendix: Barrier} \label{barrier_appendix}
In this section we will construct some barriers for \eqref{PME_pressure} that we used throughout the article. 
\subsection{Super Solutions For Radial \texorpdfstring{$m$}{m} or \texorpdfstring{$d=1$}{d=1}}  We first construct a super solution to the pressure equation \eqref{PME_pressure} for all $k>1$ when $m$ is radial or we are in one spatial dimension.

\begin{lemma} Assume that either $m$ is radial $(m(x,t)=m(r,t)$ where $r=|x|$), or we are working in one spatial dimension ($d=1$). Then if Assumption \ref{Assum_1} is met , then the radial function
\[ \varphi(r,t) := \frac{1}{d} \int_0^{r} \frac{r'}{m(r',t)} dr'  \] solves
\begin{equation}  \frac{1}{m} \nabla \cdot(m \nabla \varphi) =  1 \text{ in } \R^d \label{phi_m}  \end{equation} such that there exists $C_1,C_2,C_3>0$ with $C_1 r^2 \leq |\varphi(r,t)| \leq C_2  r^2$ and $|\nabla \varphi(r,t)| \leq C_3|r|.$
\label{pressure_ellipitic_barrier}
\end{lemma}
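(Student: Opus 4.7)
The plan is to exploit the radial symmetry (or one-dimensional structure) to reduce the identity to an ODE computation in $r$, which can then be verified directly by differentiating the explicit integral defining $\varphi$. In both cases (radial $m$ in $\R^d$ or $d=1$), the function $\varphi$ is radial and the vector field $m\nabla\varphi = m(r,t)\varphi_r\,\hat r$ is itself a radial vector field, so the classical formula
\[
\nabla\cdot(m\nabla\varphi) = \frac{1}{r^{d-1}}\partial_r\bigl(r^{d-1}\,m\,\varphi_r\bigr)
\]
applies (degenerating to $\partial_x(m\varphi_x)$ when $d=1$).

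From the definition, a direct differentiation gives $\varphi_r(r,t) = r/(d\,m(r,t))$, so $r^{d-1}m\,\varphi_r = r^d/d$ is independent of $m$. Then $\partial_r(r^d/d) = r^{d-1}$, and dividing by $r^{d-1}$ (and by $m$ to match the operator on the left-hand side of the lemma) yields the claimed identity. Smoothness at the origin is not a concern: since the integrand $r'/m(r',t)$ vanishes linearly as $r'\to 0$, $\varphi$ behaves like $r^2/(2d\,m(0,t))$ near $r=0$ and extends to a $C^\infty$ radial function of $x\in\R^d$ (in the $d=1$ case one defines $\varphi$ so that $\varphi(x,t)$ is even in $x$).

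For the quantitative bounds, I would use the two-sided control of $m$ provided by Assumption \ref{Assum_1}: the lower bound $m\geq\delta$ is explicit, while the upper bound $m\leq M$ follows from $m\in C^\infty$ with bounded zeroth-order (hence all-order) derivatives. Then
\[
\frac{r^2}{2dM} \;\leq\; \varphi(r,t) \;=\; \frac{1}{d}\int_0^r \frac{r'}{m(r',t)}\,dr' \;\leq\; \frac{r^2}{2d\delta},
\]
which gives $C_1 = 1/(2dM)$ and $C_2 = 1/(2d\delta)$. Likewise,
\[
|\nabla\varphi(r,t)| \;=\; |\varphi_r(r,t)| \;=\; \frac{r}{d\,m(r,t)} \;\leq\; \frac{r}{d\,\delta},
\]
yielding $C_3 = 1/(d\,\delta)$.

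There is no serious obstacle in this lemma; the content is a direct ODE verification followed by bookkeeping with the uniform bounds on $m$. The only point worth being careful about is the interpretation in the 1D case (where $r$ effectively plays the role of the signed spatial variable, with $\varphi$ extended as an even function) and the regularity of $\varphi$ at the origin, both of which are automatic once one observes that $\varphi$ vanishes to second order at $r=0$ with a smooth dependence on $x$.
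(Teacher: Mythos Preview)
Your proof is correct and is exactly the direct verification the paper intends; in fact the paper states this lemma without proof, so there is nothing to compare against.

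One small point: your computation correctly yields $\nabla\cdot(m\nabla\varphi)=1$, but the parenthetical ``and by $m$ to match the operator on the left-hand side of the lemma'' does not rescue the identity as literally written---dividing by $m$ gives $\tfrac{1}{m}\nabla\cdot(m\nabla\varphi)=\tfrac{1}{m}$, not $1$. This is a typo in the paper's statement rather than in your argument: the very next lemma uses this $\varphi$ to compute $\tfrac{1}{m}\nabla\cdot(m\nabla Z)=-\alpha/m$ for $Z=\alpha(R-\varphi)$, which is consistent with $\nabla\cdot(m\nabla\varphi)=1$ (and not with the stated $\tfrac{1}{m}\nabla\cdot(m\nabla\varphi)=1$). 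So your computation is right; you should simply note the discrepancy explicitly rather than gloss over it.
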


With the above $\varphi$ we can construct a viscosity super solution to the pressure equation \eqref{PME_pressure} for all $k>1$ when $d=1$ or $m$ is radial.

\begin{definition}[Viscosity Super-Solutions] We say that $\psi$ is a viscosity super solution to  \eqref{PME_pressure} if for all $\phi \in C^{\infty}(\R^d)$ that touch $\psi$ from below at $(x_0,t_0)$ then
\begin{equation} \p_t \phi \geq |\nabla \phi|^2 + \nabla \phi \cdot \vec{b} + (k-1) \phi( \frac{1}{m} \nabla \cdot(m \nabla \phi) + F) \text{ at the point } (x_0,t_0). \label{viscosity_diff_ineq} \end{equation}
\end{definition}

\begin{lemma}[Viscosity Super Solutions To The Pressure Equation for Radial \texorpdfstring{$m$}{m} or in 1D] Assume that $m$ is radial or we are in one spatial dimension with Assumption $\ref{Assum_1}$ met. Then let $\varphi(x,t)$ be as in Lemma $\ref{pressure_ellipitic_barrier}$. Now if we fix a $\gamma>1$  then there exists a large $\alpha>0$ and a smooth increasing $R(t)$ with $R(0)=\gamma$ such that
\[ Z(x,t) := \alpha|R(t)-\varphi(x,t)|_{+}  \] is viscosity super solution to \eqref{PME_pressure} for all $k>1$. \label{super_solution_barrier_radial}
\end{lemma}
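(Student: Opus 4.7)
\medskip
\noindent\textbf{Proof proposal.} The plan is to check the viscosity super-solution condition for $Z=\alpha(R(t)-\varphi)_+$ in three regions separately: the interior of $\{Z>0\}$, the interior of $\{Z=0\}$, and the free boundary $\{\varphi(\cdot,t)=R(t)\}$. In the interior of $\{Z>0\}$ the function $Z$ is smooth, so any test function touching from below equals $Z$ there and it suffices to check the PDE classically. Using $\nabla Z=-\alpha\nabla\varphi$, $\p_tZ=\alpha(R'-\p_t\varphi)$, and the defining relation $\frac{1}{m}\nabla\cdot(m\nabla\varphi)=1$ from Lemma~\ref{pressure_ellipitic_barrier}, we have $\frac{1}{m}\nabla\cdot(m\nabla Z)=-\alpha$, so the required inequality becomes
\[
\alpha(R'-\p_t\varphi)\ \geq\ \alpha^2|\nabla\varphi|^2-\alpha\,\nabla\varphi\cdot\vec b\ +\ (k-1)\alpha(R-\varphi)(F-\alpha).
\]
If one picks $\alpha>\sup_{Q_T} F$ at the outset, then $(F-\alpha)<0$ and $R-\varphi>0$ on $\{Z>0\}$, so the $(k-1)$-term is favorable for \emph{every} $k>1$ and may be discarded. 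We are left with the $k$-independent condition
\begin{equation}\label{mainineq}
R'(t)-\p_t\varphi(x,t)+\nabla\varphi(x,t)\cdot\vec b(x,t)\ \geq\ \alpha\,|\nabla\varphi(x,t)|^2
\qquad\text{on } \{\varphi\leq R(t)\}.
\end{equation}

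In the interior of $\{Z=0\}$ a smooth $\phi$ touching from below must satisfy $\phi(x_0,t_0)=0$ with $\phi\leq 0$ in a neighborhood, so $\p_t\phi=0$, $\nabla\phi=0$, and $\phi=0$ at the touch point; the super-solution inequality trivially reduces to $0\geq 0$. The delicate case is on the free boundary $\{\varphi=R(t)\}$, where $Z$ has a Lipschitz corner. Here I would compute the subdifferential of $Z$ at such a point in space-time: because $Z=\alpha(h)_+$ with $h(x,t)=R(t)-\varphi(x,t)$ smooth with $\nabla_{x,t}h=(-\nabla\varphi,R'-\p_t\varphi)\neq 0$, a standard convex-analysis argument shows
\[
\p^-Z(x_0,t_0)=\bigl\{\,s\,(-\nabla\varphi,\,R'-\p_t\varphi)\ :\ s\in[0,\alpha]\,\bigr\}.
\]
Any test function $\phi$ touching $Z$ from below must have $(\nabla\phi,\p_t\phi)$ in this set, i.e.\ $\nabla\phi=-s\nabla\varphi$ and $\p_t\phi=s(R'-\p_t\varphi)$ for some $s\in[0,\alpha]$. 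Since $\phi(x_0,t_0)=0$ the super-solution condition \eqref{viscosity_diff_ineq} collapses to $s(R'-\p_t\varphi)\geq s^2|\nabla\varphi|^2-s\,\nabla\varphi\cdot\vec b$, which for $s\in(0,\alpha]$ is a weaker version of exactly \eqref{mainineq}.

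It therefore remains to pick $\alpha$ and $R(t)$ so that \eqref{mainineq} holds. Using Lemma~\ref{pressure_ellipitic_barrier} together with Assumption~\ref{Assum_1} (so that $\p_t m/m^2$ and $\nabla m/m$ are bounded), we have on the support $\{\varphi\leq R(t)\}\subset\{|x|\leq\sqrt{R(t)/C_1}\}$ the uniform estimates $|\nabla\varphi|^2\leq C r^2\leq CR/C_1$, $|\p_t\varphi|\leq Cr^2\leq CR/C_1$, and $|\nabla\varphi\cdot\vec b|\leq C r\leq C\sqrt{R/C_1}$. Thus \eqref{mainineq} is implied by an ODE of the form $R'(t)\geq C_\alpha(R(t)+\sqrt{R(t)})$, which for $R\geq 1$ is satisfied by the explicit choice
\[
R(t):=\gamma\exp(M(\alpha)t),
\]
for $M(\alpha)$ sufficiently large (depending only on $\alpha$ and the constants from Lemma~\ref{pressure_ellipitic_barrier}, Assumption~\ref{Assum_1}, and $\|\vec b\|_\infty$). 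The condition $R(0)=\gamma>1$ is automatic and $R$ is smooth and increasing. The main obstacle I anticipate is the rigorous identification of the subdifferential of $Z$ at the Lipschitz corner; once this is in hand, the ODE analysis for $R$ and the verification of \eqref{mainineq} are straightforward.
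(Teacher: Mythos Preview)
Your argument is correct and follows the same overall architecture as the paper's proof: verify the classical super-solution inequality in $\{Z>0\}$ after choosing $\alpha>\sup F$ so the $(k-1)$-term has a good sign, observe the trivial case in the interior of $\{Z=0\}$, and then handle the free boundary separately.

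The one genuine difference is how the free boundary is treated. You compute the subdifferential of $Z=\alpha(R-\varphi)_+$ at a corner point and show that any test function touching from below must have first-order jet $s\,\nabla_{x,t}(R-\varphi)$ for some $s\in[0,\alpha]$; since $\phi=0$ at the contact point kills the second-order term in \eqref{viscosity_diff_ineq}, the required inequality reduces to a scaled-down version of \eqref{mainineq}. The paper instead argues via the normal velocity of the free boundary: if $|\nabla\phi(x_0,t_0)|\ne 0$ it compares $V_\phi=\p_t\phi/|\nabla\phi|$ to $V_Z$ and uses that the interior estimate \eqref{mainineq} (after Young's inequality) forces $V_Z\ge |\nabla Z|+\|\vec b\|_\infty$, which transfers to $\phi$ because $|\nabla\phi|\le|\nabla Z|$ at the touch point. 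Your subdifferential computation is arguably cleaner and more self-contained, since it avoids invoking the free-boundary normal velocity and the one-sided gradient of $Z$; the paper's route is more in the spirit of Hele--Shaw free boundary arguments. Both hinge on $\nabla\varphi\ne 0$ on $\{\varphi=R(t)\}$, which holds because $R(t)\ge\gamma>0$ forces $|x|>0$ there and $|\nabla\varphi|\sim r$ by Lemma~\ref{pressure_ellipitic_barrier}. The choices of $R(t)$ also differ cosmetically---the paper uses Young's inequality on $\nabla\varphi\cdot\vec b$ and takes $R(t)=(\gamma+M)e^{ct}-M$, while you bound $|\nabla\varphi\cdot\vec b|\lesssim\sqrt R$ directly and take $R(t)=\gamma e^{Mt}$---but both satisfy the needed differential inequality.
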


\begin{proof} We note that in this lemma, it is important to keep track of the constants that are not $\alpha$. Let us denote $U(t) := \{x:Z(x,t)>0\}$. We first show that $Z$ is a classical super solution of \eqref{PME_pressure} in $\{Z>0\}$. First thanks to Lemma \ref{pressure_ellipitic_barrier}, we have that
\[ \frac{1}{m} \nabla \cdot(m \nabla Z) = -\alpha/m \text{ in } U(t). \] This implies if $\alpha$ is sufficiently large that
\[  \frac{1}{m} \nabla \cdot(m \nabla Z) + F \leq 0 \text{ in } U(t) \quad \forall t>0. \] So by \eqref{PME_pressure}, it suffices to check that
\[ \p_t Z \geq \alpha^2|\nabla \varphi|^2 - \alpha (\nabla \varphi \cdot \vec{b}) \text{ in } U(t) \] to show that $Z$ is a classical super solution in $\{Z>0\}$. Now by Young's Inequality with $M := ||\vec{b}|||_{\infty}^2/2$, it suffices to show that 
\begin{equation} \p_t Z \geq \alpha^2\frac{3}{2}|\nabla \varphi|^2 + M \text{ in }  U(t). \label{desired_phi} \end{equation} 

Now we compute bounds on $\p_t \varphi$ and $|\nabla \varphi|^2$. By Lemma \ref{pressure_ellipitic_barrier} we have that $| \nabla \varphi(r,t)|^2 \leq C_3^2 r^2$ with $C_1 r^2 \leq |\varphi(r,t)|,$ so by choosing $K_{\varphi} := C_1 C_3^2$, we have 
\begin{equation} |\nabla \varphi|^2 \leq K_{\varphi} |\varphi|. \label{phi_nabla_2} \end{equation}  
Now to bound $\p_t \varphi$, we observe from Lemma \eqref{pressure_ellipitic_barrier} that \[  \p_t \varphi(r,t) = -\frac{1}{d} \int_0^r \frac{m_t r'}{m^2} dr', \]  so we have the bound \begin{equation} |\p_t \varphi(r,t)| \leq C_4 r^2 \leq M_{\varphi} \varphi(r,t) \label{phi_time_bound} \end{equation} for some $C_4,M_{\varphi}>0$. \\

Now by \eqref{phi_nabla_2} and \eqref{phi_time_bound} to show \eqref{desired_phi} it suffices to choose an $R$ such that
\[ \p_t R \geq (\frac{3}{2} \alpha^2 K_{\varphi} + M_{\varphi})R(t) + M. \] In particular, we can choose
\[ R(t) = (\gamma+M) \exp(( \frac{3}{2} \alpha^2 K_{\varphi} + M_{\varphi})t)  - M, \] which satisfies all the desired conditions. Thus, $Z$ is a classical super solution inside $\{Z>0\}$. \\

Because $0$ is a solution of the pressure equation \eqref{PME_pressure} and $Z$ is a classical super solution in $U(t)$, it suffices to show the viscosity touching condition \eqref{viscosity_diff_ineq} on $\p \{Z>0\}$ to conclude $Z$ is a viscosity super solution.  \\

Let $\phi$ be a smooth test function that touches $Z$ from below at $(x_0,t_0) \in \p \{Z>0\}$. We may assume that we have $|\nabla \phi(x_0,t_0)| \neq 0$ as then \eqref{viscosity_diff_ineq} would be trivial because $\p_t \phi(x_0,t_0) \geq 0$. Indeed, observe
\[ Z(x_0,t_0+h) - Z(x_0,t_0) \geq \phi(x_0,t_0+h) - \phi(x_0,t_0), \] which implies $\p_t \phi(x_0,t_0) \geq 0.$ \\

So now assume that $\phi$ touches $Z$ from below at $(x_0,t_0) \in \p \{Z>0\}$ such that $|\nabla \phi(x_0,t_0)| > 0$. First observe from \eqref{desired_phi} that by undoing our Young's Inequality that
\begin{equation} \p_t Z \geq |\nabla Z|^2 + (|\nabla Z|) ||\vec{b}||_{\infty} \text{ in } \{Z>0\} \label{Z1}. \end{equation} Now let $V_{\phi}$ and $V_Z$ denote the normal velocity of the free boundary of $\p \{x: \phi(x,t)>0\}$ and $\p \{x:Z(x,t)>0\}$  respectively at $(x_0,t_0)$ then from \eqref{Z1}
\[ \frac{\p_t \phi(x_0,t_0)}{|\nabla \phi(x_0,t_0)|} = V_{\phi} \geq V_{Z} \geq |\nabla Z(x_0,t_0)| + ||\vec{b}||_{\infty}. \] Note that $\nabla Z(x_0,t_0)$ refers to the limit of the $\nabla Z(x,t)$ from inside $\{Z>0\}$ to the point $(x_0,t_0)$ and is non-zero because $|\nabla \phi(x_0,t_0)| \leq |\nabla Z(x_0,t_0)|$ because $\phi$ is touching $Z$ from below. Then this implies
\[ \p_t \psi(x_0,t_0) \geq  |\nabla \psi(x_0,t_0)|^2 + \nabla \psi(x_0,t_0) \cdot \vec{b}(x_0,t_0),  \] which allows us to conclude that $Z$ is a viscosity super solution.
\end{proof}

\begin{remark} Assume that we can construct a $u$ such that $\nabla \cdot(m \nabla u) \geq 1$ where $C_1 |x|^2 \leq u(x,t) \leq C_2 |x|^2$ and $|\nabla u(x,t)| \leq C |x|$ with $|\p_t u(x,t)| \leq C |x|^2 + C$. Then the proof of Lemma \eqref{super_solution_barrier_radial} implies that we can construct a viscosity super solution of \eqref{PME_pressure} for all $k>1$ that is uniformly compact support in $Q_T$.  \label{remark_1}
\end{remark}

\subsection{Super Solutions When \texorpdfstring{$\nabla m$}{m} Decays Fast Enough At \texorpdfstring{$\infty$}{o}} 

Now we use this for the case of $|\nabla m(x,t)| \leq \e/|x|$ for $|x| \geq R/2$ to construct a positive sub-solution of
 the uniformly elliptic operator $\nabla \cdot(m(x,t) \nabla u)$ in $\R^d$. We first consider the unique solution of
\begin{equation} \begin{cases} \nabla \cdot(m(x,t) \nabla [\phi(x,t)]) =1 \text{ in } B_{R}(0) \times [0,T] \\ \phi(x,t) = 1 \text{ on } \p B_{R}(0) \end{cases}. \label{ellipitic_pressure_L} \end{equation} Now we use  a priori  estimates to show that we may assume that $\phi(x,t) > 0$ in $B_R(0) \times [0,T]$. \\

By standard  a priori estimates for uniformly elliptic PDEs (see \cite{han2011elliptic, gilbarg2015elliptic}), we have that there is a constant $C=C(m,\nabla m,R,d)$ such that
\[ \sup_{(x,t) \in B_R(0) \times [0,T]}|\phi(x,t)| \leq C. \]  So we may assume that $\phi(x,t) > 0$ on $B_{R}(0) \times [0,T]$ since otherwise we may consider $\phi(x,t)+2C$, which solves the same PDE with constant boundary data. \\

Next we will obtain regularity of $\phi$ in time. First by global Schauder estimates (see \cite{gilbarg2015elliptic}), there is a $C=C(m,\nabla m,R,d)$ such that
\begin{equation} \sup_{(x,t) \in B_R(0) \times [0,T]} |\phi(x,t)| + \sup_{(x,t) \in B_R(0) \times [0,T]} \sum_{i=1}^{d} |\p_{x_i} \phi(x,t)| + \sup_{(x,t) \in B_R(0) \times [0,T]} \sum_{i,j=1}^{d} |\p^2_{x_ix_j} \phi(x,t)| \leq C. \label{phi-bounds}  \end{equation} Hence, by differentiating $(\ref{ellipitic_pressure_L})$ in time, we have that
\[ \begin{cases} \nabla \cdot(m_t \nabla \phi) + \nabla \cdot(m \nabla \phi_t) = 0 \text{ in } B_R(0) \times [0,T] \\ \phi_t(x,t)=0 \text{ on } \p B_R(0) \end{cases}. \] The first term, $\nabla \cdot(m_t \nabla \phi)$ is smooth in space and bounded in $\overline{B_R(0)} \times [0,T]$, so
by treating this as a known term, we can conclude again by standard a priori arguments for uniformly elliptic PDE that there is a $C=C(m,\nabla m,R,d)$ such that
\[ |\phi_t(x,t)| \leq C. \]

 Observe that for $|x| \geq R$ by Cauchy-Schwarz we have that
\[ \nabla \cdot \left[ m \nabla(|x|^2) \right] = 2dm + 2\nabla m \cdot \vec{x} \geq 2d \delta - 2\e \geq \delta. \]  Note that we used $m(x,t) \geq \delta > 0$, $|\nabla m(x,t)| \leq \e/|x|$ for $|x| \geq R/2$, and assumed $\e=\e(\delta)$ is sufficiently small. From  $(\ref{phi-bounds})$, we can choose a $C \geq 1/\delta$ so large such that for $w(x):=1+C(|x|^2-R^2)$ and if $x \in \p B_R(0)$, we have that the normal derivatives are ordered:
\[ \frac{\p w}{\p r}(x) >  \frac{\p \phi}{\p r}(x),  \] which implies
\[ \varphi(x,t) := \begin{cases} \phi(x,t) \text{ for } |x| \leq R \\ w(x) \text{ for } |x| \geq R \end{cases} \] is a viscosity solution of $\nabla \cdot( m \nabla u) \geq 1$ (since the maximum of two viscosity sub-solutions  is a viscosity sub-solution \cite{crandall1992user} and $\varphi$ cannot be touched from above on $\p B_R(0)$). Also notice that $\p_t \varphi = 0$ for $|x| \geq R$, and $|\p_t \varphi|=|\p_t \phi| \leq C$ in $B_R(0)$, so $\p_t \varphi$ is bounded.  \\

Now note that $\varphi$ satisfies all the requirements of Remark \ref{remark_1}, so we conclude

\begin{lemma} [Super Solutions To The Pressure Equation when $\nabla m$ decays fast enough at $\infty$] Assume that Assumption $\ref{Assum_1}$ is met and there is an $R>0$ such that $|\nabla m(x,t)| \leq \e/|x| $ for $|x| \geq R/2$ and $\e=\e(\delta)$ sufficiently small and non-negative. Then for any $\gamma>1$ we have that there is an $R$ with exponential growth such that $R(0)=\gamma$ with $\p_t R \geq 0$ and
\[ Z(x,t) := \alpha |R(t)-\varphi(x,t)|_{+} \] is a viscosity super solution to the pressure equation \eqref{PME_pressure} when  $\alpha>0$ is sufficiently large and $k>1$. And $Z$ is bounded in $L^{\infty}(Q_T)$ and is compactly supported for any fixed time. \label{super_solution_case_2}
\end{lemma}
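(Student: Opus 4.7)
The plan is to reduce the lemma to Remark \ref{remark_1} by verifying that the $\varphi$ constructed immediately before the lemma statement satisfies the hypotheses required there, after which the argument of Lemma \ref{super_solution_barrier_radial} applies verbatim. First I would recall what has already been established about $\varphi$: it is a viscosity sub-solution of $\nabla \cdot(m \nabla u) \geq 1$ on $\R^d$, it equals $\phi \in C^{2}(\overline{B_R(0)})$ with $\phi$ bounded from below by a positive constant on $B_R(0) \times [0,T]$ (after adding $2C$), and it equals $w(x) = 1 + C(|x|^2 - R^2)$ for $|x| \geq R$. From Schauder estimates on $\phi$ and the explicit form of $w$, I get the global bounds
\[ |\nabla \varphi(x,t)| \leq C(1+|x|), \qquad |\partial_t \varphi(x,t)| \leq C, \qquad \varphi(x,t) \geq \delta_0 > 0. \]

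Next I would verify the two structural inequalities needed to run the barrier argument of Lemma \ref{super_solution_barrier_radial}, namely $|\nabla \varphi|^2 \leq K_\varphi \varphi$ and $|\partial_t \varphi| \leq M_\varphi \varphi$. For the first, inside $B_R(0)$ the left side is bounded by a constant while $\varphi \geq \delta_0 > 0$; outside $B_R(0)$ we have $|\nabla \varphi|^2 = 4C^2 |x|^2$ and $\varphi = 1 + C(|x|^2 - R^2)$, which grow at the same quadratic rate. For the second, $\partial_t \varphi \equiv 0$ outside $B_R(0)$, while inside $|\partial_t \varphi| \leq C$ and $\varphi \geq \delta_0$. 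Both inequalities follow with constants depending only on $m$, $R$, and $d$.

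With these in hand, I would set $Z(x,t) := \alpha |R(t) - \varphi(x,t)|_{+}$ and run the proof of Lemma \ref{super_solution_barrier_radial} unchanged. Inside $\{Z>0\}$ we compute $\tfrac{1}{m}\nabla\cdot(m\nabla Z) \leq -\alpha/m$, so choosing $\alpha$ large enough (depending on $\|F\|_\infty$ and $\delta$) makes $\tfrac{1}{m}\nabla\cdot(m\nabla Z) + F \leq 0$, and the pressure equation reduces to verifying $\partial_t Z \geq \alpha^2 |\nabla \varphi|^2 - \alpha \nabla \varphi \cdot \vec{b}$. Using Young's inequality with $M := \|\vec{b}\|_\infty^2/2$ and the bounds just established, this reduces to requiring
\[ R'(t) \geq \Bigl(\tfrac{3}{2}\alpha^2 K_\varphi + M_\varphi\Bigr) R(t) + M, \]
which is solved by the explicit exponential $R(t) = (\gamma + M)\exp\!\bigl((\tfrac{3}{2}\alpha^2 K_\varphi + M_\varphi) t\bigr) - M$, giving $R(0) = \gamma$ and $\partial_t R \geq 0$. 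On the free boundary $\partial \{Z>0\}$ the viscosity touching condition is checked by the same dichotomy as in Lemma \ref{super_solution_barrier_radial}: either the test function has zero spatial gradient at the touching point (in which case the inequality $\partial_t \phi \geq 0$ is trivial), or we invoke the pointwise inequality $\partial_t Z \geq |\nabla Z|^2 + |\nabla Z|\,\|\vec{b}\|_\infty$ inside $\{Z>0\}$ to compare normal velocities. Finally, $L^\infty(Q_T)$ boundedness of $Z$ follows from $R(T) < \infty$, and compact support of $Z(\cdot,t)$ follows from $\varphi(x,t) \to \infty$ as $|x|\to\infty$ via the quadratic tail of $w$.

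The main subtlety, which is where I would spend the most care, is ensuring that the patched $\varphi$ does inherit the hypotheses of Remark \ref{remark_1} globally rather than just on one side of $\partial B_R(0)$: the viscosity sub-solution property at $|x| = R$ was secured in the pre-lemma discussion by choosing $C$ large enough that the normal derivative of $w$ strictly exceeds that of $\phi$ (so $\varphi$ cannot be touched from above by a smooth test function at the interface), but the quadratic-growth estimates and the time-derivative bound need to be checked on both pieces and matched, as outlined above. Once this bookkeeping is done, no new ideas beyond those of Lemma \ref{super_solution_barrier_radial} are needed.
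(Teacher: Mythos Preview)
Your proposal is correct and follows the same approach as the paper: the paper simply notes that the patched $\varphi$ constructed just before the lemma satisfies the hypotheses of Remark~\ref{remark_1}, after which the proof of Lemma~\ref{super_solution_barrier_radial} applies, and you have unpacked exactly this. Your formulation of the needed structural bounds ($|\nabla\varphi|^2 \leq K_\varphi \varphi$ and $|\partial_t\varphi| \leq M_\varphi \varphi$ via the positive lower bound $\varphi \geq \delta_0$ on $B_R(0)$) is in fact more precise than the paper's Remark~\ref{remark_1}, and your observation that the free boundary $\{\varphi = R(t)\}$ lies in $\{|x|>R\}$ (since $R(0)=\gamma>1=\varphi|_{\partial B_R}$) so that the non-smooth interface $\partial B_R(0)$ sits strictly inside $\{Z>0\}$ where the viscosity condition is vacuous---this is a detail the paper leaves implicit.
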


\subsection{Subsolutions For Congestion Case}

Now we construct a barrier for the pressure equation when we are in the congestion case $(\ref{congestion})$. By a similar argument as in \cite{kim2019singular} Lemma A.4 we have the following:
\begin{lemma} Let $x_0 \in \R^d$, $L$ be the Lipschitz Constant of $\vec{b}$, and for sufficiently small $0<\gamma \ll r$, then define
\[ \Pi(x,t) := \gamma^2 - r^2 e^{2Lt}|x-X(t,x_0)|^2 \] Then if we are in the congestion case $(\ref{congestion})$ and we have Assumption $\ref{Assum_1}$ then $\Pi(x,t)$ is a classical sub-solution of the pressure equation \eqref{PME_pressure} on the set $\{\Pi>0\} \cap \{0 < t< T\}$ for all $k>1$.
\label{Big_Pi_Barrier}
\end{lemma}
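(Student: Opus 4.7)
The plan is a direct verification that $\Pi$ satisfies the sub-solution inequality for \eqref{PME_pressure},
\[ \p_t \Pi \leq |\nabla \Pi|^2 + \nabla \Pi \cdot \vec{b} + (k-1)\Pi\left(\frac{1}{m}\nabla \cdot(m \nabla \Pi) + F\right), \]
on the set $\{\Pi>0\}$, by computing each derivative of $\Pi$ explicitly. Write $y:=x-X(t,x_0)$, so $\p_t y = -\p_t X(t,x_0) = \vec{b}(X(t,x_0),t)$, and record
\[ \p_t \Pi = -2Lr^2 e^{2Lt}|y|^2 - 2r^2 e^{2Lt}\, y \cdot \vec{b}(X(t,x_0),t),\qquad \nabla \Pi = -2 r^2 e^{2Lt} y, \]
\[ |\nabla \Pi|^2 = 4 r^4 e^{4Lt}|y|^2,\qquad \frac{1}{m}\nabla \cdot(m\nabla \Pi) = -2 r^2 e^{2Lt}\bigl(d + \nabla\lambda \cdot y\bigr). \]

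The first step handles the transport part. Combining the two time–derivative expressions one gets
\[ \p_t \Pi - \nabla \Pi \cdot \vec{b}(x,t) = -2L r^2 e^{2Lt}|y|^2 + 2 r^2 e^{2Lt}\, y\cdot[\vec{b}(x,t)-\vec{b}(X(t,x_0),t)], \]
and by the Lipschitz assumption on $\vec{b}$ together with Cauchy--Schwarz, the bracket is bounded in absolute value by $L|y|^2$, so $\p_t \Pi - \nabla \Pi \cdot \vec{b} \leq 0$. This cancellation is exactly what the choice of the prefactor $e^{2Lt}$ is engineered to achieve; this is where the fact that $X(t,x_0)$ is a streamline enters.

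The second step uses the congestion assumption. Since $p_k$ is uniformly compactly supported in some $\Omega \times [0,T]$, the continuity argument already given in \eqref{new_monotocity} gives $F \geq \e > 0$ on that set. On $\{\Pi>0\}$ one has the elementary bound $|y| \leq \gamma/(r e^{Lt})$, hence
\[ \Bigl|\tfrac{1}{m}\nabla\cdot(m\nabla\Pi)\Bigr| \leq 2 d r^2 e^{2LT} + 2 r \gamma\, e^{LT}\|\nabla\lambda\|_\infty. \]
Choosing $r$ first and then $\gamma \ll r$ sufficiently small (depending on $\e$, $L$, $T$, $d$, and $\|\nabla\lambda\|_\infty$), this quantity is at most $\e/2$, so $\frac{1}{m}\nabla\cdot(m\nabla\Pi)+F \geq \e/2>0$ on $\{\Pi>0\}$. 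Since $\Pi \geq 0$ and $|\nabla \Pi|^2 \geq 0$ there, we conclude $(k-1)\Pi\bigl(\tfrac{1}{m}\nabla\cdot(m\nabla\Pi)+F\bigr) + |\nabla \Pi|^2 \geq 0$ for every $k>1$.

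Combining the two steps,
\[ \p_t \Pi \leq \nabla \Pi \cdot \vec{b} \leq |\nabla \Pi|^2 + \nabla \Pi \cdot \vec{b} + (k-1)\Pi\left(\tfrac{1}{m}\nabla\cdot(m\nabla\Pi)+F\right), \]
which is the required classical sub-solution inequality on $\{\Pi>0\}\cap\{0<t<T\}$ uniformly in $k>1$. The only delicate point is the interplay in step two: the second-order term $\frac{1}{m}\nabla\cdot(m\nabla\Pi)$ is negative, so one must guarantee it is dominated by $F$. This is the single place where the congestion hypothesis \eqref{congestion} is essential — without it, $F$ might fail to have a positive lower bound on the support of $p_k$ and no choice of $r,\gamma$ would salvage the inequality.
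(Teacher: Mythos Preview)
Your proof is correct and follows the same direct-verification approach that the paper refers to (the paper itself gives no proof, only citing the analogous barrier computation in \cite{kim2019singular} Lemma A.4). One cosmetic point: your justification that $F \geq \e > 0$ appeals to \eqref{new_monotocity}, which is stated on the support $\Omega$ of $p_k$; what you actually need is $F \geq \e$ on $\{\Pi>0\}\cap\{0\le t\le T\}$, which follows directly from the congestion hypothesis $F>0$ in $Q_T$ together with the compactness of $\{\Pi>0\}\cap\{0\le t\le T\}$ (the center $X(t,x_0)$ stays bounded and the radius $\gamma/(re^{Lt})$ is bounded).
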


\begin{lemma} Let $\Pi$ be as in Lemma $\ref{Big_Pi_Barrier}$. Then $|\Pi|_{+} := \max \{\Pi,0\}$ is a viscosity sub-solution to the pressure equation for all $k>1$.
\end{lemma}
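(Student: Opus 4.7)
The plan is to verify the viscosity sub-solution definition directly at every point $(x_0,t_0)\in Q_T$ at which a smooth test function touches $|\Pi|_{+}$ from above. Writing out the inequality we must check, we need, for every such $\phi\in C^{\infty}$,
\[ \p_t \phi(x_0,t_0) \leq |\nabla \phi|^2 + \nabla \phi \cdot \vec b + (k-1)\phi\Bigl(\tfrac{1}{m}\nabla\cdot(m\nabla\phi) + F\Bigr) \]
at $(x_0,t_0)$. I would split according to the sign of $\Pi(x_0,t_0)$ into three cases, and argue that the non-trivial content is already contained in Lemma \ref{Big_Pi_Barrier}, while the boundary case is handled by a local-minimum argument exploiting the degeneracy of the equation in the pressure variable.

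In Case 1, suppose $\Pi(x_0,t_0)>0$. Then $|\Pi|_{+}\equiv\Pi$ in a neighborhood, so $\phi\geq\Pi$ there with equality at $(x_0,t_0)$. This gives the usual matching of first derivatives $\p_t\phi=\p_t\Pi$ and $\nabla\phi=\nabla\Pi$ at the touching point, together with $D^2\phi\geq D^2\Pi$. Because $m>0$ and the first-order contributions agree, $\nabla\cdot(m\nabla\phi)\geq \nabla\cdot(m\nabla\Pi)$ at the point; multiplying by $(k-1)\phi=(k-1)\Pi>0$ preserves the inequality, and the classical sub-solution inequality for $\Pi$ from Lemma \ref{Big_Pi_Barrier} yields the desired viscosity inequality for $\phi$.

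In Case 2, suppose $\Pi(x_0,t_0)<0$. Then $|\Pi|_{+}\equiv 0$ in a neighborhood of $(x_0,t_0)$, so $\phi\geq 0$ there with $\phi(x_0,t_0)=0$; hence $(x_0,t_0)$ is a local minimum of $\phi$, forcing $\p_t\phi=0$ and $\nabla\phi=0$ at that point. The viscosity inequality collapses to $0\leq (k-1)\cdot 0\cdot (\cdots) = 0$, which holds. Case 3 is the boundary case $\Pi(x_0,t_0)=0$. Since $|\Pi|_{+}\geq 0$ everywhere and $|\Pi|_{+}(x_0,t_0)=0$, the touching condition still gives $\phi\geq 0$ near $(x_0,t_0)$ with $\phi(x_0,t_0)=0$, so once again $(x_0,t_0)$ is a local minimum of $\phi$, and the inequality reduces to $0\leq 0$ exactly as in Case 2.

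The only place I anticipate any subtlety is Case 3, where the cusp-like behavior of $|\Pi|_{+}$ along $\p\{\Pi>0\}$ could a priori cause trouble. The key observation that saves us is the degeneracy of the pressure equation: the entire RHS carries an explicit factor of $\phi$, which vanishes at the touching point, and the local minimum structure of $\phi$ kills the LHS simultaneously. Thus no actual differential information about $\Pi$ is needed at $\p\{\Pi>0\}$, which is exactly why the positive-part truncation of a classical sub-solution defined only on $\{\Pi>0\}$ extends to a viscosity sub-solution on all of $Q_T$.
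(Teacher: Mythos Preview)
Your proof is correct and in fact takes a slightly more direct route than the paper's. The paper only sketches the argument, pointing to Lemma~\ref{super_solution_barrier_radial} and suggesting one handle $\partial\{\Pi>0\}$ via the normal-velocity comparison used there for the super-solution barrier $Z$. That comparison is genuinely needed in the super-solution case, because a test function touching $Z$ \emph{from below} can dip below zero, so $\nabla\phi$ need not vanish at a boundary point of $\{Z>0\}$; one must then compare $V_\phi$ and $V_Z$. In your sub-solution setting the situation is asymmetric: $\phi$ touches from above and $|\Pi|_+\ge 0$, so $\phi(x_0,t_0)=0$ automatically forces $(x_0,t_0)$ to be a local minimum of $\phi$ and hence $\nabla\phi(x_0,t_0)=0$. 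Your Case~3 therefore disposes of the free-boundary points without any velocity computation, which is cleaner than what the paper's pointer would have you do.

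One small imprecision worth fixing: you assert that ``the entire RHS carries an explicit factor of $\phi$,'' but the first-order terms $|\nabla\phi|^2+\nabla\phi\cdot\vec b$ do not. They vanish for a different reason, namely $\nabla\phi=0$ at the spatial minimum; only the second-order term $(k-1)\phi\bigl(\tfrac{1}{m}\nabla\cdot(m\nabla\phi)+F\bigr)$ is killed by the factor of $\phi$. Also, under the standard parabolic touching convention (touching in a past neighborhood $B_r(x_0)\times(t_0-r,t_0]$) you only obtain $\partial_t\phi(x_0,t_0)\le 0$ rather than equality, but this is precisely the direction required for the sub-solution inequality. Neither point affects the validity of your argument.
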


\begin{proof} This follows from an analogous proof in Lemma \ref{super_solution_barrier_radial} using the normal velocity of $\p \{ x : \Pi(x,t)>0\}.$
\end{proof}

\section{Appendix: Weak Solutions of mPME} \label{weak_mPME}
In this appendix, we will sketch the proofs of properties of weak solutions to \eqref{PME_Density}. \\

Note that \eqref{PME_Density} is a parabolic equation that degenerates on the zero level set, so we still expect a comparison principle to hold for weak sub and super solutions. We sketch the proof below, which is based on the proof in \cite{kim2018regularity}.

\begin{theorem}[Comparison Principle for mPME] Let $\overline{u}$ and $\underline{u}$ respectively be a weak super and sub solution of \eqref{PME_Density} with initial datas $\overline{u}_0$ and $\underline{u}_0$
 respectively. Assume further that $\overline{u}_0 \geq \underline{u}_0$ almost everywhere, then we have $\overline{u} \geq \underline{u}$ almost everywhere.
\end{theorem}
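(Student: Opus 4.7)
The plan is to adapt the Hilbert duality method of Section~\ref{uniqueness_section} to ordered sub/super-solutions. Set $w := \overline{u} - \underline{u}$, $\overline{v} := \overline{u}/m$, $\underline{v} := \underline{u}/m$. After truncating to the bounded case (which is preserved by the sub/super structure), the mean value theorem gives
$$\overline{v}^k - \underline{v}^k \;=\; B(x,t)\,w, \qquad B(x,t) := \frac{k}{m(x,t)}\int_0^1 \bigl(\theta\,\overline{v} + (1-\theta)\underline{v}\bigr)^{k-1}\,d\theta \ \ge\ 0,$$
with $B\in L^\infty(Q_T)$. Subtracting the weak sub-solution inequality for $\underline{u}$ from the weak super-solution inequality for $\overline{u}$, tested against a non-negative $\varphi \in C_c^\infty(\R^d \times [0,T))$, and integrating by parts in space yields
$$\iint_{Q_T} w\,\Bigl[-\partial_t\varphi - B\,\nabla\cdot(m\nabla\varphi) + \vec b\cdot\nabla\varphi - f\varphi\Bigr] \ \ge\ \int_{\R^d} \varphi(\cdot,0)\,(\overline{u}_0 - \underline{u}_0) \ \ge\ 0.$$

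Given any non-negative $\chi \in C_c^\infty(Q_T)$ supported in some $B_R \times (0,T)$, the idea is to engineer a non-negative $\varphi$ realizing the bracket as $\chi$. Since $B$ may vanish, this cannot be done directly; instead, approximate $B$ by smooth $B_n$ with $\tfrac{1}{n} \le B_n \le \|B\|_\infty + 1$ and $\|B - B_n\|_{L^2(Q_T)} \le \tfrac{1}{n}$, and solve the uniformly backward-parabolic Dirichlet problem
$$\begin{cases} -\partial_t \varphi_n - B_n\,\nabla\cdot(m\nabla\varphi_n) + \vec b\cdot\nabla\varphi_n - f\varphi_n = \chi & \text{in } B_R\times(0,T), \\ \varphi_n(\cdot,T) = 0, \quad \varphi_n = 0 \text{ on } \partial B_R\times(0,T). \end{cases}$$
Classical parabolic theory produces a smooth $\varphi_n$, and the maximum principle (after the time reversal $t\mapsto T-t$ makes this a forward uniformly parabolic problem with non-negative source and vanishing data) gives $\varphi_n \ge 0$, so $\varphi_n$ is an admissible test function. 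Repeating the energy argument of Lemma~\ref{dual_phi_ineq} — multiplying the dual equation by $\nabla\cdot(m\nabla\varphi_n)$ and integrating — produces the uniform-in-$n$ bounds $\|\varphi_n\|_{L^\infty},\ \|\nabla\varphi_n\|_{L^\infty_t L^2_x},\ \|B_n^{1/2}\nabla\cdot(m\nabla\varphi_n)\|_{L^2(Q_T)} \le C(\chi,T)$.

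Plugging $\varphi_n$ into the displayed inequality and using the dual equation to replace $B$ inside the bracket by $B_n$ gives
$$\iint_{Q_T} w\,\chi \ \ge\ \iint_{Q_T} w\,(B - B_n)\,\nabla\cdot(m\nabla\varphi_n),$$
and Cauchy--Schwarz bounds the right side by $\|w\|_\infty\,\|(B-B_n)B_n^{-1/2}\|_{L^2}\,\|B_n^{1/2}\nabla\cdot(m\nabla\varphi_n)\|_{L^2}$. The choice $B_n \ge 1/n$ yields $\|(B-B_n)B_n^{-1/2}\|_{L^2}^2 \le n\,\|B - B_n\|_{L^2}^2 \le 1/n$, so the error vanishes as $n\to\infty$, leaving $\iint w\chi \ge 0$ for every non-negative $\chi \in C_c^\infty(Q_T)$; hence $w \ge 0$ a.e. The main obstacle is exactly the degeneracy of $B$ on the common zero set of the two solutions: the dual backward problem is genuinely degenerate parabolic, and it is the combination of the pointwise lower bound $B_n \ge 1/n$ (ensuring solvability and the maximum principle) with the quantitative $L^2$-rate $\|B - B_n\|_{L^2} \le 1/n$ (killing the error in the limit) that makes the scheme work; the reduction to bounded solutions on a large ball is a standard secondary technicality, admissible since $\chi$ is compactly supported.
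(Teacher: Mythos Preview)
Your proposal is correct and follows essentially the same Hilbert duality argument as the paper's own (sketched) proof: both set $w=\overline{u}-\underline{u}$, introduce the nonnegative coefficient $A=B=(\overline{v}^k-\underline{v}^k)/w$, regularize it away from zero, and solve the resulting uniformly backward-parabolic dual problem to conclude $\iint w\chi\ge 0$. The only difference is cosmetic---the paper defers the limiting error estimate to \cite{kim2018regularity}, whereas you spell it out by recycling the bounds of Lemma~\ref{dual_phi_ineq}.
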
 

\begin{proof} The proof is based on a Hilbert duality argument. Let $w := \overline{u}-\underline{u}$, then we have for any non-negative test function $\varphi$ that
\[  0 \leq \iint_{Q_T} w \left( \p_t \varphi + A \nabla \cdot(m \nabla \varphi) +f\varphi -\nabla \varphi \cdot \vec{b}  \right), \] where  $0 \leq A := \frac{ \overline{v}^k - \underline{v}^k}{\overline{u}-\underline{u}}$ with $ \overline{v} := \overline{u}/m$ and $\underline{v} := \underline{u}/m$. Then if $\xi$ is a non-negative test function, then we want to solve the following degenerate backwards parabolic PDE
\begin{equation} \begin{cases} \p_t \varphi + A \nabla \cdot(m \nabla \varphi) + f \varphi - \nabla \varphi \cdot \vec{b} = \xi \text{ in } Q_T \\ \varphi(x,T)=0 \label{dual_back} \end{cases}. 
\end{equation}  Note that solutions of \eqref{dual_back} will be non-negative because 0 is a sub-solution of \eqref{dual_back}. Then if we could solve \eqref{dual_back}, then we would have
\[ \iint_{Q_T} w \xi \geq 0 \] for all non-negative test functions, which would imply $w \geq 0$ almost everywhere. \\ 

To solve \eqref{dual_back}, we have to approximate $A$ by a smooth approximation $A_{\e}$ where $\e \leq A_{\e} \leq \frac{1}{\e}$ with $\e > 0$. One can then solve \eqref{dual_back} with $A$ replaced by $A_{\e}$ by using uniform parabolic theory. Then by following similar arguments as in \cite{kim2018regularity} Theorem 3.4, one sees that as $\e \rightarrow 0$ we obtain that $ \iint_{Q_T} w \xi \geq 0$.
\end{proof}

Our comparison principle immediately implies that we have uniqueness of weak solutions of \eqref{PME_Density}. So we now focus on existence. \\

Our existence proof is based on the methods in \cite{vazquez2007porous}. We first fix a large spatial domain $\Om$ and construct weak solutions on a bounded domain. We let $\Om_T := \Om \times (0,T)$, then we want to construct a solution to
\begin{equation} \begin{cases}
	\p_t \varrho_k = \nabla \cdot( \varrho_k(\nabla p_k + \vec{b})) + f \varrho_k \text{ in } \Om_T  \\
	\varrho_k(x,0)= \varrho_k^0(x) \text{ in } \Om \\
	\varrho_k(x,t) = 0 \text{ on } \p \Om \times [0,T]
	\end{cases}, \label{bounded_PME}
\end{equation}

\noindent where the initial data $\varrho_k^0(x)$ is compactly supported inside $\Om$. 

\begin{definition}[Weak Solutions to mPME on $\Om_T$] We say that $\varrho_k$ is a weak solution to   \eqref{bounded_PME} with initial data $\varrho_k^0 \in L^{\infty}(\Om)$ that is compactly supported in $\Om$ if $\varrho_k \in L^2(\Om_T)$ and $v_k := \varrho_k/m$ satisfies $v_k^k \in H_0^1(\Om_T)$ such that for all test functions $\varphi \in C^{\infty}_c(\Omega \times [0,\infty))$
\begin{equation}
	 \iint_{\Om_T} -\varrho_k \p_t \varphi =  \int_{\Omega} \varphi(x,0) \varrho_k^0(x) + \iint_{\Om_T} -m\nabla \varphi \cdot \nabla v_k^k - \varrho_k(\nabla \varphi \cdot \vec{b}) + f \varphi \varrho_k.  \label{weak_sub_soln_bound} \end{equation} 
\end{definition}

To construct a weak solution, we solve the following approximate solution. For $n \in \N$, we consider the approximate problem 
\begin{equation} \begin{cases}
	\p_t \varrho^{(n)}_k = \nabla \cdot( \varrho_k^{(n)}(\nabla p_k^{(n)} + \vec{b})) + f \varrho_k^{(n)} \text{ in } \Om_T  \\
	\varrho^{(n)}_k(x,0)= \varrho_k^0(x) + \frac{m(x,0)}{n} \text{ in } \Om \\
	\varrho^{(n)}_k(x,t) = m(x,t)/n\text{ on } \p \Om \times [0,T]
	\end{cases} \label{bounded_PME1}.
\end{equation}
We can compare $\varrho_k^{(n)}$ to the sub-solution $\delta m(x,t)\exp(-\alpha t)$ for small enough $\delta>0$ and large enough $\alpha>0$, to conclude $\varrho_k^{(n)} \geq \e_n > 0$. Then this implies $\varrho_k^{(n)}$ solves a uniformly parabolic PDE, so by following the arguments in \cite{vazquez2007porous} Chapter 3, we know that $\varrho_k^{(n)}$ is a smooth and is a classical solution to \eqref{bounded_PME}. \\

Now from the comparison principle, we have that $\varrho_k^{(n)} \geq \varrho_k^{(n+1)}$, so there exists a limit $\varrho_k := \lim_{n \rightarrow \infty} \varrho_k^{(n)}$. By the monotone convergence theorem, we have that $\varrho_k^{(n)} \rightarrow \varrho_k$ in $L^p(\Om_T)$ for $p \in [1,\infty)$. So from the weak formulation of \eqref{bounded_PME}, it suffices to show that if $v_k^{(n)} := \varrho_k^{(n)}/m$, then $(v_k^{(n)})^k \rightharpoonup v_k^k$ in $H^1(\Om_T)$ where $v_k := \varrho_k/m$. In particular, it suffices to show that $||\nabla (v^{(n)}_k)^k||_{L^2(\Om_T)} \leq C$ where $C$ is independent of $n$. To simplify our notations, let us write $q_n := (v_k^{(n)})^k$ and $u_n := \varrho_k^{(n)}$. Then we can multiply \eqref{PME_Density} by $(q_n - \frac{1}{n^k})$ which is 0 on $\p \Om$  to see by integration that $|| \nabla q_n||_{L^2(\Om_T)} \leq C$. This concludes the existence of weak solutions of \eqref{bounded_PME}. \\


Notice in particular, that this shows we can approximate our weak solutions of \eqref{bounded_PME} with positive smooth solutions of  \eqref{bounded_PME}. Then by using our smooth approximations we have a comparison principle for viscosity super solutions and our weak solutions on $\Om_T$. Now to go from $\Om_T$ to $Q_T$, we use our barriers constructed in Section \ref{barrier_appendix}. These barriers show that if we choose $\Om$ sufficiently large, then our weak solution of \eqref{weak_sub_soln_bound} will be compactly supported inside $\Om$. Then we can extend our weak solution by defining to be zero outside of $\Om$, which gives a weak solution of \eqref{PME_Density}. \\

\bibliographystyle{amsplain}
\bibliography{HS_Bib}

\end{document}